\theoremstyle{plain}
\newtheorem{The}{Theorem}
\newtheorem*{The*}{Theorem}
\newtheorem{Lem}{Lemma}
\newtheorem{Cor}{Corollary}
\newtheorem*{Cor*}{Corollary}
\theoremstyle{definition}
\newtheorem{Def}{Definition}
\newtheorem*{Rem*}{Remark}
\newtheorem*{Con*}{Conjecture}
\DeclareMathOperator{\End}{End}
\DeclareMathOperator{\Diff}{Diff}
\DeclareMathOperator{\Tr}{tr}             
\DeclareMathOperator{\Id}{id}
\DeclareMathOperator{\Ad}{Ad}
\DeclareMathOperator{\ad}{ad}  
\DeclareMathOperator{\mM}{\mathcal M}
\DeclareMathOperator{\Jac}{Jac}
\renewcommand{\Im}{\operatorname{Im}}
\renewcommand{\Re}{\operatorname{Re}}
\newcommand{\udot}{\!\dot{\phantom{i}}\!}
\DeclareMathOperator{\del}{\partial}
\newcommand{\R}{\mathbb{R}}
\newcommand{\C}{\mathbb{C}}
\newcommand{\N}{\mathbb{N}}
\newcommand{\Z}{\mathbb{Z}}
\newcommand{\CP}{\mathbb{CP}}
\DeclareSymbolFont{bbold}{U}{bbold}{m}{n}
\DeclareSymbolFontAlphabet{\mathbbold}{bbold}
\newcommand{\one}{\mathbbold{1}}
\begin{document}

\title{Commuting Hamiltonian flows of curves in real space forms}

\author{Albert Chern}
\author{Felix Kn\"oppel}
\author{Franz Pedit}

\address{
Institute of Mathematics, MA 8-4\\
Technical University Berlin\\
Strasse des 17. Juni 136\\
10623 Berlin\\
GERMANY}

\author{Ulrich Pinkall}

\address{ Department of Mathematics and Statistics\\
University of Massachusetts\\
Amherst, MA 01003\\
 USA}



\date{\today}

\maketitle
\begin{abstract}
Starting from the vortex filament flow introduced in 1906 by Da Rios, there is a hierarchy of commuting geometric flows on space curves. The traditional approach relates those flows to the nonlinear Schr\"{o}dinger hierarchy satisfied by the complex curvature function of the space curve. 
Rather than working with this infinitesimal invariant, we describe the flows directly as vector fields on the manifold of space curves. 
This manifold carries a canonical symplectic form introduced by Marsden and Weinstein. Our flows are precisely the symplectic gradients of a natural hierarchy of invariants, beginning with length, total torsion, and elastic energy. There are a number of advantages to our geometric approach. For instance, the real part of the spectral curve is geometrically realized as the motion of the monodromy axis when varying total torsion. This insight provides a new explicit formula for the hierarchy of Hamiltonians. We also interpret the complex spectral curve in terms of curves in hyperbolic space and Darboux transforms.
Furthermore, we complete the hierarchy of Hamiltonians by adding area and volume. These allow for the characterization of elastic curves as solutions to an isoperimetric problem: elastica are the critical points of length while fixing area and volume.
\end{abstract}

\section{Introduction}

The study of curves and surfaces in differential geometry and geometric analysis has given rise to a number of important global problems, which 
play a pivotal role in the development of those subjects.  A historical example worth mentioning is Euler's study and classification of elastic planar curves, which are the critical points of the bending energy $\int \kappa^2$, the averaged squared curvature of the curve. Variational calculus, conserved quantities, and geometry combine beautifully to provide a complete solution of the problem.  Euler presumably did not know that the bending energy belongs to an infinite hierarchy of commuting energy functionals on the space of planar curves, whose commuting symplectic gradients are avatars of the modified Korteweg--de Vries (mKdV) hierarchy. 
In fact, a more complete picture arises when considering curves in 3-space, in which case the flows are a geometric manifestation of the non-linear Schr\"odinger hierarchy (of which mKdV is a reduction).

There is evidence that an analogous structure is present on the space of surfaces with abelian fundamental groups in 3- and 4-space. The energy functional in question is the Willmore energy $\int H^2$, averaging the squared mean curvature of the surface, whose critical points are Willmore surfaces.  Again, this functional is part of an infinite hierarchy of commuting functionals related to the Davey--Stewartson hierarchy in mathematical physics. The  existence of such hierarchies plays a significant role in classification problems in surface geometry including minimal, constant mean curvature, and Willmore surfaces.  

The equations mentioned, (m)KdV, non-linear Schr\"odinger, Davey--Stewartson (and some of its reductions like modified Novikov--Veselov, sinh-Gordon, etc.)~have their origins in mathematical physics and serve as prime models for {\em infinite dimensional integrable systems.} Characterizing features of these equations include soliton solutions, a transformation theory --- Darboux transforms --- with permutability properties, Lax pair descriptions on loop algebras, dressing transformations, reformulations as  families of flat connections --- zero curvature descriptions, and explicit solutions arising from linear flows on Jacobians of finite genus spectral curves. The latter provide a stratification of the hierarchy by finite dimensional classical integrable systems giving credence to the terminology. 

This note will provide a geometric inroad into these various facets of infinite dimensional integrable systems by discussing in detail the simplest example, namely the space $\mM$ of curves $\gamma$  in $\R^3$. The advantage of this example lies in its technical simplicity without loosing any of the conceptual complexity of the theory. At various junctures we shall encounter loop algebras, zero curvature equations, Jacobians etc. as useful concepts, techniques, and possible directions for further exploration. Rather than  working with closed curves $\gamma\colon S^1\to\R^3$, many of the classically relevant examples require curves $\gamma\colon M\to\R^3$ with monodromy given by an orientation preserving Euclidean motion $h(p)=Ap+a$. Such curves are equivariant 
in the sense that $\tau^*\gamma=A\gamma+a$, where $M\cong \R$ with a fixed diffeomorphism $\tau$ so that $M/\tau\cong S^1$.

The space $\mM$ has the structure of a pre-symplectic manifold thanks to the Marsden--Weinstein $2$-form $\sigma\in\Omega^2(\mM,\R)$ given by equation \eqref{eq:Arnold}. This $2$-form is degenerate along the tangent spaces to the reparametrization orbits of diffeomorphisms on $M$  compatible with the translation $\tau$. The $L^2$-metric $\langle -,-\rangle$ gives $\mM$
the additional structure of a Riemannian manifold and the two structures relate via
$\sigma(\xi,\eta)=\langle T\times \xi,\eta\rangle$
for $\xi,\eta \in T\mM$, and $T\in\Gamma(T\mM)$ the vector field whose value at $\gamma\in\mM$ is the unit length tangent  $T(\gamma)=\gamma'$. The Riemannian and pre-symplectic structures allow us to construct variational $G_E$ and  symplectic $Y_E$ gradients to a given energy functional $E\colon \mM\to\R$.  In this note we focus on the Hamiltonian aspects of the space $\mM$ and show that $\mM$ can be viewed as a  phase space for an infinite dimensional  integrable system given by a hierarchy of commuting Hamiltonians $E_k\in C^{\infty}(\mM,\R)$ and corresponding commuting symplectic vector fields $Y_k\in\Gamma(T\mM)$. 

Having introduced the space $\mM$ in Section~2, we start Section~3 with a number of classical energy functionals: 
the length functional $E_1$, 
 the total torsion functional $E_2$, measuring the turning angle of a parallel section in the normal bundle of a curve $\gamma\in\mM$ over a fundamental domain $I\subset M$ for $\tau$, and the bending, or elastic, energy $E_3$, measuring the total squared curvature of a curve over $I\subset M$.
Their variational and symplectic gradients are well known, see Table~\ref{tab:hierarchy} and also the historical Section~\ref{sec:history}, and the behavior of some of their flows has been studied from geometric analytic and Hamiltonian aspects. There are two more Hamiltonians, $E_{-1}$ and $E_{-2}$, whose significance in our context seems to be new: they arise from the flux of the infinitesimal translation and rotation vector fields on $\R^3$ through a surface spanned by the curve $\gamma\in\mM$ and the axis of its monodromy over a fundamental region $I\subset M$.  These functionals, at least for closed curves, measure a certain projected enclosed area  of $\gamma$ and the  volume of the solid torus generated by revolving $\gamma$ around a fixed axis, respectively. Since $E_{-1}$ and $E_{-2}$  are given by infinitesimal isometries, they are preserved by the symplectic flows of $E_k$ for $1\leq k\leq 3$. As an example, the vortex  filament flow
 $Y_1=\gamma'\times \gamma''$ preserves those areas and volumes as shown in Figure~\ref{fig:volumefunctional}. The functionals $E_{-1}$ and $E_{-2}$ also provide new isoperimetric characterizations of the critical points of the elastic energy $E_2$ constrained by length and total torsion, the so-called {\em Euler elastica}. For instance, an Euler elastic curve can also be described as being critical for total torsion under length and enclosed area constraints, or critical for length under enclosed area and volume constraints. 
 
 Inspecting the gradients $G_k$ and $Y_k$ of the functionals $E_k$ for $-2\leq k\leq 3$ in Table~\ref{tab:hierarchy} (we also included the vacuum  $E_0=0$), one notices the pattern
 \[
 G_{k+1}=-Y_k'
 \]
 along curves $\gamma\in\mM$. This recursion, together with the fact that $G_{k}=T\times Y_k$, can then be used to define an infinite sequence
 \cite{yasui1998, langer1999} of vector fields $Y_k\in\Gamma(T\mM)$ via
 \[
 Y_{k+1}'+T\times Y_{k}=0\,,\quad Y_0=T\,.
 \]
 Since the vortex filament flow $Y_1=\gamma'\times \gamma''$ corresponds to the non-linear Schr\"odinger flow \cite{hasimoto1972} of the associated complex curvature function $\psi$ of the curve $\gamma$, it is reasonable to expect the flows $Y_k$ to be avatars of the non-linear Schr\"odinger
 hierarchy. We regard this as evidence that the vector fields $Y_k$ commute on $\mM$ and are symplectic for a hierarchy of energy functionals
$E_k$ on $\mM$.  

Our discussion of the commutativity of the $Y_k$ in Section~4 relates the flows $Y_k$ to purely Lie theoretic flows 
$V_k$ on the loop Lie algebra $\Lambda\R^3$  of formal Laurent series with coefficients in $\R^3={\bf su}_2$. This loop Lie algebra has the decomposition 
\[
\Lambda\R^3=\Lambda^{+}\R^3\oplus\Lambda^{-}\R^3
\]
into sub Lie algebras given by positive and non-negative frequencies in $\lambda$.
The flows $V_k$ are known to commute \cite{ferus1992}. Moreover, the recursion relation for $Y_k$  corresponds to the lowest order non-trivial flow $V_0=Y\times \lambda Y_0$, a Lax pair equation, for the generating loop 
\[
Y=\sum_{k\geq 0} Y_k\lambda^{-k}
\]
on the Lie subalgebra $\Lambda^{-}\R^3\subset \Lambda\R^3$.  The main theorem in this section relates the evolution of a curve 
$\gamma\in\mM$ by the flow $Y_k$ to the evolution of the generating loop $Y$ by the Lie theoretic flow $V_k(Y)=Y\times (\lambda^{k+1}Y)_{+}$, with $(\,)_{+}$ projection along the decomposition of $\Lambda\R^3$.  This observation eventually implies the commutativity of the vector fields $Y_k$ on $\mM$.  It appears that our approach to commutativity is new. Related results in the literature  \cite{langer1999} are usually concerned with the induced flows on the space of complex curvature functions $\psi$, rather than with the flows on the geometric objects, the curves $\gamma\in\mM$, per se.  

 The connection of our flows $Y_k$ to commuting vector fields $V_k$ on the loop Lie algebra $\Lambda\R^3$ provides us with a rich solution theory  for the  dynamics generated by $Y_k$. A particularly well studied type of solutions are the finite gap curves $\gamma\in\mM$. These arise from invariant finite dimensional subspaces $\Lambda_d\subset \Lambda^{-}\R^3$ of polynomials of degree $d$ in $\lambda^{-1}$. The flows $V_k$ are non-trivial only for $k=0,\dots ,d-1$, and linearize on the (extended) Jacobian $\Jac(\Sigma)$ of a finite genus $g_{\Sigma}=d-1$ algebraic curve $\Sigma$, the {\em spectral curve} of the flows $V_k$. Thus, finite gap solutions can in principle be explicitly parametrized by theta functions on $\Sigma$. 
 
 A non-trivial problem is to single out those finite gap solutions which give rise to curves $\gamma\in\mM$ with prescribed monodromy. It seems feasible that our setting of monodromy preserving flows  $Y_k$ could contribute to this problem.  A related question, which may be within reach of our approach, is how to approximate a given curve $\gamma\in\mM$ by a finite gap solution of low spectral genus $g_{\Sigma}$. 
 
In addition to their algebro-geometric significance, finite gap solutions on $\Lambda_d$  have a variational characterization as stationary solutions $\gamma\in\mM$ to the putative functional $E_{d+1}$ constrained by the lower order Hamiltonians $E_k$, $1\leq k\leq d$. For example, Euler elastica $\gamma$ correspond to  elliptic spectral curves and hence can be explicitly parametrized by elliptic functions.
 
 In Section~5 we derive the complete list of Hamiltonians $E_k$ for which $Y_k$ are symplectic. In contrast to previous work \cite{langer1991}, where those functionals are calculated from the non-linear Schr\"odinger hierarchy \cite{faddeev1987}, we work on the space of curves $\mM$   directly. The basic ingredient comes from the geometry behind the generating loop $Y=\sum_{k\geq 0}Y_k \lambda^{-k}$ of the flows $Y_k$, namely the associated family of curves $\gamma_{\lambda}$ for $\lambda\in\R$, see Figure~\ref{fig:AssociateFamily}. These curves osculate the original curve $\gamma$ at some chosen base  point $x_0\in M$ to second order, and tend to the straight line through $\gamma(x_0)$ in direction of $\gamma'(x_0)$ as $\lambda\to \infty$, see Figure~\ref{fig:SpectralCurveCone}. The rotation monodromy ${A}_{\lambda}$ of the curves $\gamma_{\lambda}$, based at $x_0\in M$,  has axis $Y_{\lambda}(x_0)$ and we show that its rotation angle $\theta_{\lambda}$ is the Hamiltonian for the generating loop $Y$, that is, 
 \[ 
\tfrac{1}{\lambda^2} d\theta = \sigma (Y,-)\,.
\]
Applying the Gau{\ss}--Bonnet Theorem to the sector traced out by the tangent image $T_{\lambda}$ of $\gamma_{\lambda}$  over a fundamental region $I\subset M$, we derive in Theorem~\ref{Hamiltonians} an explicit expression for the angle function
 $\tfrac{1}{\lambda^2}\theta=\sum_{k\geq 0}E_k\lambda^{-k}$. As an example, $E_6$ is given by 
$E_6=\int_I (-\tfrac{1}{2}\det(\gamma',\gamma''',\gamma'''')+\tfrac{7}{8}|\gamma''|^2\det(\gamma',\gamma'',\gamma'''))\,dx$.

In  Section~6 we discuss the associated family $\gamma_{\lambda}$ for complex values of the spectral parameter $\lambda\in\C$. In this case the curves $\gamma_{\lambda}$ can be seen as curves with monodromy in hyperbolic 3-space $H^3$. 
Furthermore, the  two fixed points on the sphere at infinity of $H^3$ of the hyperbolic monodromy of $\gamma_{\lambda}$, based at $x_0\in M$,  define two Darboux transforms $\eta_{\pm}\in\mM$ of the original curve $\gamma\in\mM$, see Figure~\ref{fig:HyperbolicAssociateFamily}.  
 These  fixed points of the monodromy define a hyper-elliptic spectral curve over $\lambda\in \C$, which, at least for finite gap curves $\gamma$, is biholomorphic to the spectral curve $\Sigma$ discussed in Section~4. We have thus realized the algebro-geometric spectral curve $\Sigma$ of a finite gap space curve $\gamma\in\mM$  in terms of a family of Darboux transforms $\eta\in\mM$ parametrized by $\Sigma$.  In other words, the spectral curve $\Sigma$ has a canonical realization in Euclidean space $\R^3$ as pictured in Figures~\ref{fig:HyperbolicAssociateFamily}--\ref{fig:Darboux}. 
 
 There is a vast amount of literature pertaining to our discussion, for which we have included a final historical section. Here the reader can find a chronological exposition, with references to the relevant literature, of much of the background material. 

{\em Acknowledgments\,}: this paper arose from a lecture course the third author gave during his stay at the Yau Institute at Tsinghua University in Beijing during Spring 2018. The development of the material was also supported by SFB Transregio 109 ÒDiscretization in Geometry and DynamicsÓ at Technical University Berlin. Software support for the images was provided by SideFX.


\section{Symplectic geometry of the space of curves}\label{sec:2}

Let $M\cong\R$ with a fixed orientation and let $\tau\in\Diff(M)$ denote a translation so that $M/\tau\cong S^1$. A smooth immersion $\gamma\colon M\to N$ into an oriented Riemannian manifold $N$ is called a {\em curve with monodromy} if 
\[
\tau^*\gamma=h \circ \gamma
\]
for an orientation preserving isometry $h$ of $N$, see Figure~\ref{fig:JustElastic}. The space of curves $\mathcal{M}=\mathcal{M}_{h}$ of a given monodromy $h$  is a smooth Fr{\' e}chet manifold, whose tangent spaces 
\[
T_{\gamma}\mathcal{M}=\Gamma_{\tau}(\gamma^*TN)
\]
are given by vector fields $\xi$ along $\gamma$ compatible with $\tau$, that is
\[
\tau^*\xi=h \xi\,. 
\]
We make no notational distinction between the action of an isometry on $N$ and its derived action on $TN$. The group of orientation preserving diffeomorphisms $\Diff_{\tau}(M)$ commuting with $\tau$ acts on $\mathcal{M}$ from the right by reparametrization.  We now assume that  $N$ is 3-dimensional with volume form $\det\in\Omega^3(N,\R)$. Then $\mathcal{M}$ carries a presymplectic structure \cite{marsden1983}, the {\em Marsden--Weinstein 2-form} $\sigma\in \Omega^2(\mathcal{M},\R)$, whose value at $\gamma\in\mM$ is given by
 \begin{equation}
\label{eq:Arnold}
\sigma_{\gamma}(\xi,\eta)=\int_{I}\det(d\gamma, \xi,\eta) \,.
\end{equation}
Here  $\xi,\eta\in T_{\gamma}\mathcal{M}$ and $I\subset M$ denotes a fundamental domain for $\tau$. Since the integrant is compatible with $\tau$, the integral is well-defined independent of choice of the fundamental domain. The Marsden--Weinstein 2-form $\sigma$ is degenerate exactly along the tangent spaces to the orbits of $\Diff_{\tau}(M)$. Thus, $\sigma$ descends to a symplectic structure on the space $\mM/\Diff_{\tau}(M)$ of unparametrized curves, which is singular at $\gamma\in\mM$ where $\Diff_{\tau}(M)$ does not act freely.
Since all our Hamiltonians will be geometric, and therefore  invariant under $\Diff_{\tau}(M)$, they also are defined on the space of unparametrized curves $\mM/\Diff_{\tau}(M)$.  Nevertheless, we always will carry out calculations on the smooth manifold $\mM$ and remind ourselves that results have to be viewed modulo the action of $\Diff_{\tau}(M)$.

\begin{figure}[ht]
  \centering
  \includegraphics[width=.8\columnwidth]{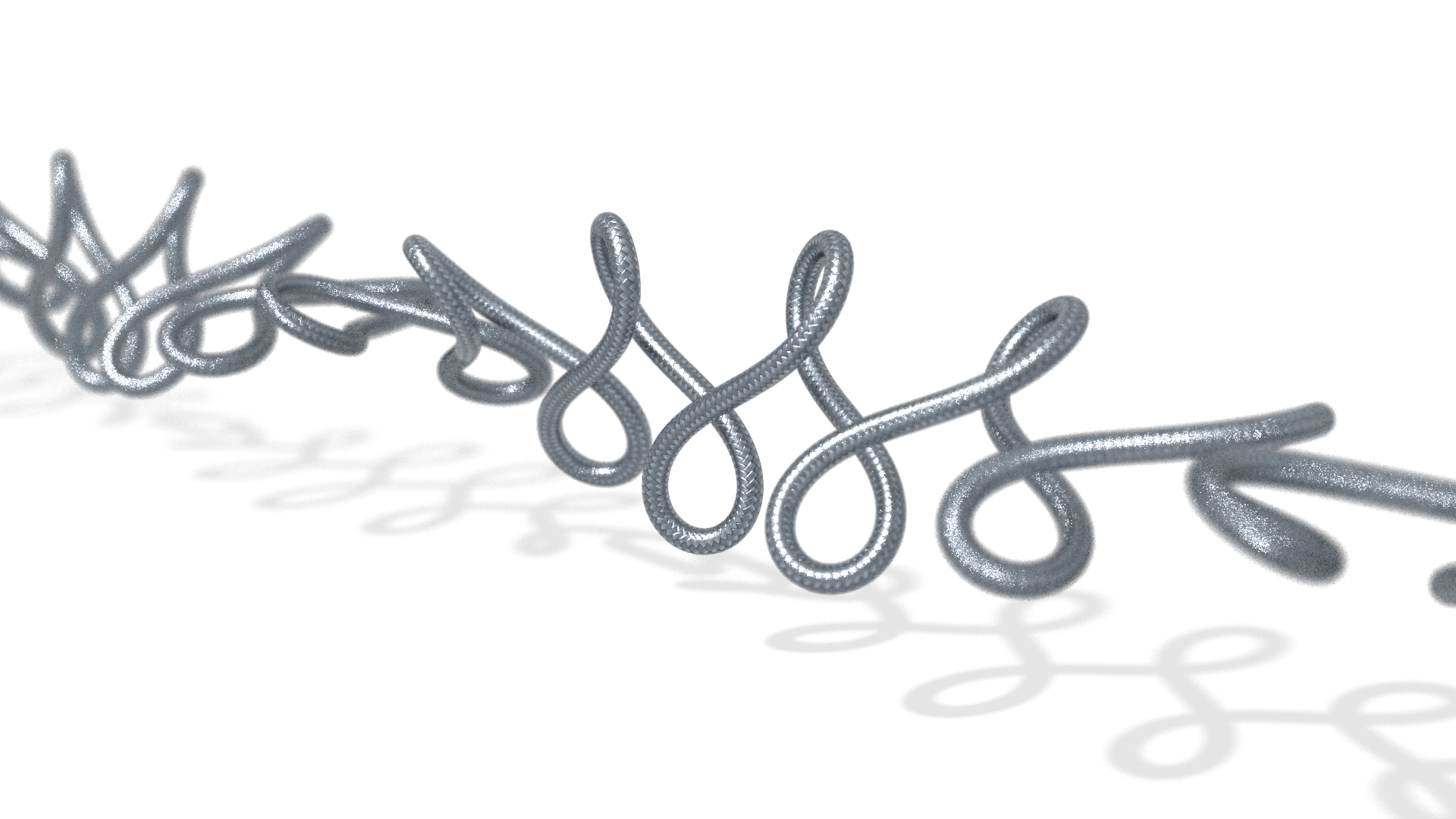}
  \caption{An elastic curve in $\R^3$ with monodromy.
  \label{fig:JustElastic}}
\end{figure}

In addition to its presymplectic structure, $\mM$ carries a Riemannian structure given by the variational $L^2$-metric
\begin{equation*}
\label{eq:L2metric}
\langle \xi,\eta\rangle_{\gamma}=\int_I(\xi,\eta)dx
\end{equation*}
for $\xi,\eta\in T_{\gamma}\mM$. Here $dx\in\Omega^1(M,\R)$ denotes the volume form of the induced metric $dx^2=(d\gamma,d\gamma)$ on $M$. We emphasize that the $1$-form $dx$ depends on the curve $\gamma$.  In classical terms $x$ is the arclength parameter for $\gamma$ on $M$. Derivatives, including covariant derivatives,  along a curve $\gamma\in\mathcal{M}$ with respect to the dual vector field $\tfrac{d}{dx}\in \Gamma(TM)$  to $dx$ will often be notated by $(\,)'$. For instance, $d\gamma=\gamma' dx$ and $T:=\gamma'$ is the unit tangent vector field along $\gamma$; if  $\xi$ is a section of a bundle with connection $\nabla$ over $M$, then $\nabla\xi=\xi'dx$.

To avoid overbearing notation, we will notationally not distinguish between $F$ and $F(\gamma)$ for maps $F$ defined on $\mM$. For example, $T=\gamma'$ can refer to the unit tangent vector field along $\gamma$ or the vector field 
$T\in\Gamma(T\mM)$, whose value at $\gamma$ is given by $T(\gamma)=\gamma'$.

The vector cross product, defined by the volume form $\det\in\Omega^3(N,\R)$, relates the Riemannian metric and the Marsden--Weinstein $2$-form  
on $\mM$  by
\begin{equation}
\label{eq:crossproduct}
\sigma (\xi,\eta)=\langle T\times \xi,\eta\rangle\,.
\end{equation}
Given a Hamiltonian  $E\colon \mM\to \R$, its {\em variational gradient} $G_E\in\Gamma(T\mM)$ is given by  
\[
\langle G_E,-\rangle=dE\,.
\]
The solutions of  the Euler--Lagrange equation $G_E=0$ are the critical points for the variational problem defined by $E$. We call a vector field $Y_E\in\Gamma(T\mM)$ on $\mM$ a {\em symplectic gradient} if 
\[
\sigma(Y_E,-)=dE\,.
\]
Note that due to the degeneracy of the Marsden--Weinstein 2-form,  $Y_E$ is defined only up to the tangent spaces to the orbits of $\Diff_{\tau}(M)$ given by 
$C^{\infty}(\mM,\R)T\subset T\mM$. Equation  \eqref{eq:crossproduct} relates the variational and symplectic gradients by
\begin{equation}
\label{eq:sgrad2grad}
G_{E}=T\times Y_{E}\,.
\end{equation}
A Hamiltonian $E$ is invariant under $\Diff_{\tau}(M)$ if and only if $G_{E}\in \Gamma_{\tau}(\perp_{\gamma}\!\!M)$ is a normal vector field along $\gamma\in\mM$. In this situation, relation \eqref{eq:sgrad2grad} can be reversed to 
\begin{equation}
\label{eq:grad2sgrad}
Y_E \equiv -T\times G_E\mod C^{\infty}(\mM,\R)T\,,
\end{equation}
and provides a symplectic gradient $Y_E$  for the invariant Hamiltonian $E$. 

There are two primary reasons why we work on the space  $\mM=\mM_{h}$ of curves with mono\-dromy, rather than just with closed curves
whose monodromy $h=\Id$ is trivial: first, a number of historically relevant examples, such a elastic curves, give rise to curves with monodromy;  second, curves with monodromy appear naturally in the {\em associated family} of curves, which provides the setup to calculate the commuting hierarchy of Hamiltonians on $\mM$ and, at the same time, gives a geometric  realization of the {\em spectral curve}. 

\section{Energy functionals on the space of curves}
Calculating the variational and symplectic gradients for a number of classical Hamiltonians $E\in C^{\infty}(\mM,\R)$,
we will observe a pattern leading to a recursion relation for an infinite hierarchy of flows.  Given a curve $\gamma\in\mM$, we choose a variation $\gamma_t\in\mM$ and denote by $(\,)\udot$ any kind of derivative with respect to $t$ at $t=0$. The variational gradient $G_E$ is then read off from 
\[
\dot{E}=dE_{\gamma}(\dot{\gamma})=\langle G_E,\dot{\gamma}\rangle\,.
\]
\subsection{Length functional}
\begin{figure}[ht]
  \centering
  \includegraphics[width=.4\columnwidth]{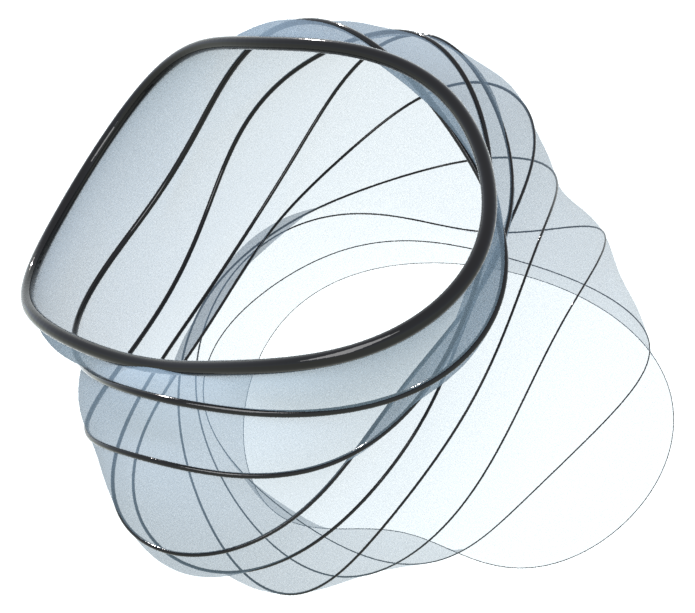}
  \caption{A closed curve in $\R^3$  evolving according to the vortex filament flow $Y_1$.
  \label{fig:smokering}}
\end{figure}

We begin with the most elementary Hamiltonian, the length functional $E_1=\int_Idx$, obtained by integrating the volume form $dx$.
\begin{The}\label{thm:lengthgrad}
The variational and symplectic gradients of the length functional $E_1$ are given by $G_1=-\gamma''$ and $Y_1\equiv \gamma'\times\gamma'' \mod C^{\infty}(\mM,\R)T$. The latter gives rise to the {\em vortex filament flow} \cite{daRios1906} on $\mM$, see Figure~\ref{fig:smokering}.
\end{The}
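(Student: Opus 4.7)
The plan is to compute $\dot{E}_1$ directly from the definition and identify the variational gradient $G_1$ via the pairing $\langle G_1,-\rangle = dE_1$, then recover the symplectic gradient $Y_1$ from $G_1$ via the formula \eqref{eq:grad2sgrad} already established in the excerpt.

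First I introduce an auxiliary parameter $s$ on $M$ so that $dx = |\gamma_s|\,ds$, choose a variation $\gamma_t$ with $\dot{\gamma}|_{t=0}=\xi\in T_\gamma\mM$, and differentiate
\[
E_1(\gamma_t) = \int_I |\gamma_{t,s}|\,ds
\]
under the integral to get $\dot{E}_1 = \int_I (T,\xi_s)\,ds = \int_I (T,\xi')\,dx$, where I have used $T = \gamma_s/|\gamma_s| = \gamma'$ and the identity $\xi_s = |\gamma_s|\,\xi'$ to translate from the $s$-derivative to the arclength derivative. Next I integrate by parts, observing that the boundary contribution over a fundamental domain $I\subset M$ vanishes: the integrand $(T,\xi)$ is $\tau$-invariant because $\tau^*T = hT$ and $\tau^*\xi = h\xi$ and $h$ is an isometry, so the values at the two endpoints of $I$ agree. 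This yields $\dot{E}_1 = -\int_I (\gamma'',\xi)\,dx$, so by comparison with the definition of the variational gradient one reads off $G_1 = -\gamma''$.

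To pass to the symplectic gradient I verify the hypothesis of \eqref{eq:grad2sgrad}, namely that $E_1$ is $\Diff_\tau(M)$-invariant, or equivalently that $G_1$ is a normal vector field along $\gamma$. This is immediate from $(\gamma'',\gamma') = \tfrac12(|\gamma'|^2)' = 0$, since $|\gamma'|=1$ in arclength. Applying \eqref{eq:grad2sgrad} then gives
\[
Y_1 \equiv -T\times G_1 = \gamma'\times \gamma'' \mod C^\infty(\mM,\R)T,
\]
which is the desired expression for the vortex filament flow.

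The only real subtlety is the first step, where the Riemannian metric $dx$ itself depends on $\gamma$; handling this correctly requires pulling back to a fixed parameter $s$ before differentiating in $t$ and then converting back to arclength. Once that bookkeeping is done, the integration by parts and the vanishing of boundary terms under the monodromy condition are straightforward, and the passage from $G_1$ to $Y_1$ is a direct application of \eqref{eq:grad2sgrad}.
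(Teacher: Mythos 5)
Your proof is correct and follows essentially the same route as the paper: both compute $\dot{E}_1=\int_I(T,\dot\gamma{}')\,dx$ (the paper phrases this as $d\dot x=(\nabla\dot\gamma,T)$, using the symmetry $(d\gamma)\udot=\nabla\dot\gamma$, while you pull back to a fixed parameter $s$ first), integrate by parts with the boundary term killed by $\tau$-compatibility, and then apply \eqref{eq:grad2sgrad}. Your explicit verification that $G_1=-\gamma''$ is normal, so that \eqref{eq:grad2sgrad} is applicable, is a small worthwhile addition that the paper leaves implicit.
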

\begin{proof}
Since $d\gamma_t=T_t dx_t$, we have the formula
\begin{equation}\label{eq:dotgamma}
(d\gamma)\udot=\nabla\dot{\gamma}=\dot{\gamma}' dx=\dot{T}dx+Td\dot{x}
\end{equation}
which will be reused a number of times. Taking inner product with $T$, and recalling $(T,\dot{T})=0$  due to $(T,T)=1$, gives
\[
d\dot{x}=(\nabla\dot{\gamma},T)=d(\dot{\gamma},T)-(\dot{\gamma},T')dx\,.
\]
Integrating both sides over a fundamental domain $I\subset M$ and applying Stokes' Theorem yields
\[
\dot{E_1}=\int_{I}d\dot{x}=-\int_I (\dot{\gamma},T')dx= \langle -\gamma'',\dot{\gamma}\rangle\,.
\]
Hence $G_1=-\gamma''$ and from \eqref{eq:grad2sgrad} the symplectic gradient becomes $Y_1\equiv \gamma'\times \gamma''$.
\end{proof}
\subsection{Torsion functional}
The next functional to consider is the {\em total torsion} of a curve $\gamma\in\mM$. It compares the monodromy of the curve $\gamma$ with the holonomy of its normal bundle over a fundamental region $I\subset M$. The normal bundle $\perp_{\gamma}\!\!M$ is a unitary complex line bundle over $M$ with  
complex structure $T\times (-)$, and hermitian metric $(\,,\,)$ and unitary connection $\nabla$  induced by the corresponding Riemannian structures on $N$. If $P$ denotes the parallel transport in the normal bundle $\perp_{\gamma}\!\!M$ along a fundamental domain $I\subset M$ and $h$ the monodromy of the curve $\gamma$, then $h^{-1}P\in S^1$ is a unimodular complex number. Then the  {\em total torsion}  is given by 
\begin{equation*}\label{eq:torsion}
E_2=\alpha\,,\qquad h^{-1}P = \exp(i\alpha)\,.
\end{equation*}
Note that the  functional $E_2$ is only defined modulo $2\pi$. 
The following characterization of the total torsion will be useful for calculating its variational gradient:
\begin{Lem}\label{lem:torsion}
Let $\gamma\in\mM$ and $\nu\in T_{\gamma}\mM$ be a unit length normal vector field along $\gamma$. The total torsion can be expressed as
\[
\alpha=\int_I(\nabla \nu,T\times \nu)\,.
\]
\end{Lem}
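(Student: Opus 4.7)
The strategy is to use the unit compatible normal $\nu$ as a unitary trivialization of the complex line bundle $\perp_\gamma M$ (with complex structure $J = T \times (-)$), reduce parallel transport to a scalar ODE in this frame, and then compare with the monodromy $h$ to read off the rotation angle $\alpha$.

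The key observation is that since $(\nu,\nu) \equiv 1$, differentiating gives $(\nabla\nu,\nu) = 0$, so in the (real $2$-dimensional) normal bundle $\nabla\nu$ is a scalar multiple of $J\nu = T \times \nu$. Setting $\omega := (\nabla\nu, T\times\nu)$, one has $\nabla\nu = \omega \cdot (T\times\nu)$, or equivalently $\nabla\nu = i\omega\cdot\nu$ in the complex line bundle notation. Thus $i\omega$ is the connection $1$-form of $\perp_\gamma M$ written in the unitary frame $\nu$.

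The plan is then to let $s$ be the parallel section of $\perp_\gamma M$ with $s(x_0) = \nu(x_0)$ and to write $s = f\cdot \nu$ for some $f \colon M \to S^1 \subset \mathbb{C}$ with $f(x_0) = 1$. Using the connection expression from the previous paragraph, parallelism $\nabla s = 0$ reduces to the scalar ODE $df + i\omega f = 0$, which integrates to
\[
f(x) = \exp\!\Bigl(-i\!\int_{x_0}^x \omega\Bigr), \qquad \text{so} \qquad f(\tau x_0) = \exp\!\Bigl(-i\!\int_I \omega\Bigr).
\]
Compatibility of $\nu$ gives $\nu(\tau x_0) = h\nu(x_0)$, and hence $P s(x_0) = s(\tau x_0) = f(\tau x_0)\, h\, s(x_0)$. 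Since $h$ is an orientation-preserving isometry of $N$ mapping $T(x_0)$ to $T(\tau x_0)$, it commutes with $J$, so the scalar $f(\tau x_0)$ genuinely represents the element $h^{-1}P \in S^1$. Matching with $h^{-1}P = \exp(i\alpha)$ reads off the claimed formula for $\alpha$ (modulo $2\pi$), with the overall sign determined by the chosen orientation of $J$ and the direction of parallel transport.

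The one point that needs to be checked carefully is the $\mathbb{C}$-linearity of $h$ on $\perp_\gamma M$, which is what allows one to pull the scalar factor past $h$ in the final identification; once that is in place, the rest of the argument is essentially a short ODE computation in a unitary frame, with the choice of trivializing section $\nu$ doing all the work.
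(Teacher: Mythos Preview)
Your proof is correct and follows essentially the same approach as the paper: both trivialize $\perp_\gamma M$ via the compatible unit normal $\nu$, reduce parallel transport to the scalar ODE governed by the connection form $(\nabla\nu, T\times\nu)$, and then compare the resulting phase with the monodromy $h$ using $\nu(\tau x_0)=h\nu(x_0)$. The paper writes this more tersely by setting $\xi=\nu e^{-i\omega}$ with $d\omega=(\nabla\nu,T\times\nu)$ and computing $h^{-1}P=(\xi(x_1),h\xi(x_0))$ directly, but the content is identical; your remark about the sign being a matter of orientation convention is apt.
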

\begin{proof}
The unit length section $\xi=\nu\exp(-i\omega)\in\Gamma(\perp_{\gamma}\!\!M)$ is parallel, that is $\nabla\xi=0$, if and only if $\nabla\xi=T\times\xi \,d\omega$. Since 
$(\nabla \nu, T\times \nu)=(\nabla \xi, T\times \xi)$, we obtain $\alpha=\omega(x_1)-\omega(x_0)$, where $\del I=\{x_0\}\cup \{x_1\}$  is the oriented boundary of the fundamental domain $I\subset M$. On the other hand
\begin{align*}
h^{-1}P&=(\xi(x_1),h\xi(x_0))\\
&=(\nu(x_1)\exp(-i\omega(x_1)), \nu(x_1)exp(-i\omega(x_0)))\\
&=\exp(i(\omega(x_1)-\omega(x_0))=\exp(i\alpha))\,.
\end{align*}
\end{proof}
Even though the total torsion functional $E_2$ is defined modulo $2\pi$, it has a well defined variational gradient.
\begin{The}\label{thm:torsiongrad}
The variational gradient of the total torsion $E_2$  is given by
\[
G_2=-(\gamma'\times\gamma''' +*R(T)T)
\]
where $*$ denotes the Hodge star operator on the 3-dimensional oriented Riemannian manifold $N$ and $R\in\Omega^2(N,{\bf so}(TN))$ its Riemannian curvature 2-form. In particular, if $N$ is a 3-dimensional space form, the variational and symplectic gradients of the total torsion have the simple expression
\[
G_2=-\gamma'\times\gamma'''\quad\text{and}\quad Y_2\equiv -\gamma'''\mod C^{\infty}(\mM,\R)T \,.
\]
The flow generated by $Y_2$ is called the {\em helicity filament flow}  \cite{holm2004} on $\mM$.
\end{The}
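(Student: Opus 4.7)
The plan is to apply Lemma~\ref{lem:torsion} and differentiate
\[
\alpha = \int_I (\nabla\nu, T\times\nu)
\]
along a variation $\gamma_t\in\mM$ with $\dot\gamma=\xi$. Extend $\nu$ to a smooth family $\nu_t$ of unit normal fields along $\gamma_t$, and exploit the gauge freedom to rotate $\nu_t$ in the normal plane, which leaves $\alpha$ unchanged by the proof of Lemma~\ref{lem:torsion}; this lets us arrange that $\dot\nu$ has no component along $T\times\nu$. Viewing $\nu$ as a section of $\Phi^*TN$ for the variation map $\Phi\colon M\times(-\varepsilon,\varepsilon)\to N$, the essential input is the curvature identity
\[
\nabla_t\nabla_x\nu - \nabla_x\nabla_t\nu = R(\xi, T)\nu,
\]
together with \eqref{eq:dotgamma}, which gives $d\dot x = (\nabla\xi, T)$ and $\dot T\,dx = \nabla\xi - (\nabla\xi, T)T$.

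The main computation is to differentiate the integrand $(\nabla_x\nu, T\times\nu)\,dx$ at $t=0$, collecting the three contributions coming from $\nabla_x\nu$, from $T\times\nu$, and from $dx$. Terms involving $\dot\nu$ cancel up to boundary values (a consequence of the gauge-independence of $\alpha$), and the boundary values themselves vanish by $\tau$-equivariance of the integrand. Repeated integration by parts moves all derivatives off $\xi$, so that $\dot\alpha = \langle G_2, \xi\rangle$, with $G_2$ decomposing into a flat piece and a curvature piece. The flat piece evaluates to $-\gamma'\times\gamma'''$; the curvature piece arises from $(R(\xi, T)\nu, T\times\nu)$ and, after a standard manipulation using the identification of $2$-forms and vectors in dimension three, rearranges into $-(*R(T)T, \xi)$, producing
\[
G_2 = -(\gamma'\times\gamma''' + *R(T)T).
\]

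For a $3$-dimensional space form with sectional curvature $K$, $R(X,Y)Z = K((Y,Z)X - (X,Z)Y)$, so $R(\xi, T)\nu = -K(\xi,\nu)T$ is tangent to $\gamma$. Therefore $(R(\xi, T)\nu, T\times\nu) = 0$ and the curvature contribution drops out, leaving $G_2 = -\gamma'\times\gamma'''$. The symplectic gradient follows from \eqref{eq:grad2sgrad}: $Y_2 \equiv -T\times G_2 = T\times(T\times\gamma''')$, and the identity $A\times(B\times C) = B(A,C) - C(A,B)$ with $T=\gamma'$ gives $T\times(T\times\gamma''') = (T,\gamma''')\,T - \gamma'''$, which is $-\gamma'''$ modulo $C^\infty(\mM,\R)T$.

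The delicate step is the bookkeeping for the variation of $\nabla_x\nu$: isolating the curvature contribution cleanly, and verifying the cancellation of gauge-dependent $\dot\nu$ terms. The cancellation is forced by the gauge-independence of $\alpha$ once the gauge is chosen as above, but still requires a careful identification of all pieces produced by differentiating a covariant derivative along a variation of curves, as well as a careful check that the remaining curvature contribution really does match the expression $-(*R(T)T, \xi)$.
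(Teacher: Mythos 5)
Your overall route is the same as the paper's: differentiate the formula of Lemma~\ref{lem:torsion} along the variation, commute the $t$-derivative past $\nabla$ at the cost of a curvature term, integrate by parts, and convert $(R(\dot\gamma,T)\nu,T\times\nu)$ into $-(*R(T)T,\dot\gamma)$ using the curvature symmetries; your space-form argument and the derivation of $Y_2$ from \eqref{eq:grad2sgrad} also match the paper. The one device you add, gauge-fixing $\nu_t$ so that $\dot\nu$ has no $T\times\nu$-component, is legitimate (the required rotation angle is $\tau$-periodic, so $\alpha$ is unchanged) but unnecessary: in the paper the term $(\nabla\nu,T\times\dot\nu)$ vanishes for free because $T$, $\dot\nu$, $\nabla\nu$ all lie in the $2$-plane $\nu^{\perp}$, and the remaining $\dot\nu$-dependence is eliminated by integration by parts rather than by a gauge choice.

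The concrete problem is the parenthetical claim that ``terms involving $\dot\nu$ cancel up to boundary values.'' They do not, and following that claim literally produces the wrong flat piece. Even in your gauge one has $\dot\nu=(\dot\nu,T)\,T=-(\nu,\dot T)\,T$, hence
\[
(\nabla\dot\nu,\,T\times\nu)\;=\;-(\dot T,\nu)\,(\nabla T,\,T\times\nu)\,,
\]
which is neither zero nor exact. It is precisely one of the two pieces needed to assemble the identity, valid modulo exact $1$-forms,
\[
(\nabla\dot\nu,\,T\times\nu)+(\nabla\nu,\,\dot T\times\nu)\;\equiv\;(\dot T,\,T\times\nabla T)\,;
\]
writing $\dot T=a\nu+b\,T\times\nu$ and $\nabla T=(c\nu+d\,T\times\nu)\,dx$, the surviving $\dot\nu$-term contributes $-ad$ and the term $(\nabla\nu,\dot T\times\nu)$ contributes $cb$, and only the sum $-ad+cb=(\dot T,T\times\nabla T)$ integrates by parts (after substituting $\dot T\,dx=\nabla\dot\gamma-(\nabla\dot\gamma,T)T$) to $(-T\times T'',\dot\gamma)\,dx=(-\gamma'\times\gamma''',\dot\gamma)\,dx$. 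Discarding the $\dot\nu$-term leaves only $cb=(\nabla T,\nu)(\dot T,T\times\nu)$, which does not yield $-\gamma'\times\gamma'''$. So your asserted flat piece is correct, but the stated mechanism for obtaining it is not; the $\dot\nu$-term must be retained and combined with $(\nabla\nu,\dot T\times\nu)$ exactly as in the paper's computation.
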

\begin{proof} 
Let $\gamma_t\in\mM$  be a variation of $\gamma$. Using the characterization of Lemma~\ref{lem:torsion}, we calculate
\[
(\nabla \nu,T\times \nu)\udot= ((\nabla \nu)\udot, T\times \nu)+(\nabla \nu, \dot{T}\times \nu)+(\nabla \nu, T\times \dot{\nu})\,.
\]
The last term $(\nabla \nu, T\times \dot{\nu})=\det(T,\dot{\nu},\nabla \nu)=0$, since all entries are perpendicular to $\nu$ and $\nu^{\perp}$ is 2-dimensional. In the first term, we commute $(\,)\udot$ with $\nabla$ and accrue a curvature term
\[
(\nabla \nu)\udot=\nabla\dot{\nu}+R(\dot{\gamma},d\gamma)\nu\,.
\]
Therefore, using the symmetries of the Riemannian curvature $R$, we arrive at 
\[
(\nabla \nu,T\times \nu)\udot= (\nabla\dot{ \nu}, T\times \nu)+(\nabla \nu, \dot{T}\times \nu)-(R(\nu,T\times \nu)d\gamma,\dot{\gamma})\,.
\]
The curvature term can easily be rewritten as
 \[
R(\nu,T\times \nu)d\gamma =*R(T)T dx
\]
using the Hodge star  $*$ on $N$ and the relation $d\gamma=\gamma'\,dx$.  It remains to unravel the first two terms of the right hand side in the expression
above:
\[
 (\nabla\dot{ \nu}, T\times \nu)+(\nabla \nu, \dot{T}\times \nu)=d(\dot{\nu},T\times \nu)-(\dot{\nu},\nabla T\times \nu)+(\nabla \nu,\dot{T}\times \nu)
 \]
 where we omitted the term $(\dot{\nu},T\times \nabla \nu)$, which again vanishes due to all entries being perpendicular to $\nu$. Since the terms $\nabla T\times \nu$ and $\dot{T}\times \nu$  are tangential, we can rewrite 
 \[
 (\dot{\nu},\nabla T\times \nu)=(\dot{\nu},T)(T,\nabla T\times \nu)=(\nu,\dot{T})(\nabla T, T\times \nu)
 \]
 and likewise for $(\nabla \nu,\dot{T}\times \nu)$, where we used properties of the cross product.  Putting all this together and calculating modulo exact $1$-forms yields 
 \begin{align*}
 (\nabla\dot{ \nu}, T\times \nu)+(\nabla \nu, \dot{T}\times \nu)&=(\dot{T}, -\nu(\nabla T, T\times \nu)+T\times \nu (\nu,\nabla T))\\
 &=(\dot{T}, T\times \nabla T)\,.
 \end{align*}
 Next we replace $\dot{T}$ by \eqref{eq:dotgamma} and notice that $T\times \nabla T$ is normal, so that 
 \begin{align*}
 (\dot{T}, T\times \nabla T)&=(\dot{\gamma}', T\times \nabla T)\\
 &=d(\dot{\gamma}, T\times T')-(\dot{\gamma},T\times T'')dx\,.
 \end{align*}
Therefore, applying Stokes' Theorem, we have shown that 
\begin{align*}
\dot{E_2}&=\int_I  (\nabla \nu,T\times \nu)\udot=\int_I (\dot{\gamma},-T\times T''-*R(T)T)dx \\
&=\langle -T\times T'' -*R(T)T , \dot{\gamma}\rangle\,,
 \end{align*}
 which gives the stated result for the variational gradient $G_2$ after putting $T=\gamma'$. The vanishing of $*R(T)T$ for a space of constant curvature follows from the special form of the curvature tensor $R$, since $T$ is perpendicular to $\nu$ and $T\times \nu$. The expression for the symplectic gradient $Y_2$ comes from $\eqref{eq:grad2sgrad}$.
\end{proof}
\subsection{Elastic energy}
The next functional to be discussed is the {\em bending} or {\em elastic energy}
\begin{equation*}\label{eq:bending}
E_3=\tfrac{1}{2}\int_I |\gamma''|^2dx\,,
\end{equation*}
the total squared length of the curvature $\gamma''\in\Gamma_{\tau}(\perp_{\gamma}\!\!M)$ of a curve $\gamma\in\mM$ over a fundamental domain $I\subset M$. This functional has been pivotal in the development of many areas of mathematics, including variational calculus, geometric analysis, and integrable systems. In comparison, the previously discussed  total torsion appears to be less familiar, presumably because it is trivial for planar curves which, historically, were the first to be studied.
\begin{The}\label{thm:elasticgrad}
The variational gradient of the elastic energy $E_3$ is given by
\[
G_3=(\gamma'''+\tfrac{3}{2}|\gamma''|^2\gamma')'-R(\gamma',\gamma'')\gamma'\,.
\]
with $R\in\Omega^2(N,{\bf so}(TN))$ the Riemannian curvature. In case $N$ has constant curvature $K$, the variational gradient simplifies to 
\[
G_3=(\gamma'''+(\tfrac{3}{2}|\gamma''|^2-K)\gamma')'\,.
\]
\end{The}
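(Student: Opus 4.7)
The plan is to compute $\dot{E_3}$ along a variation $\gamma_t\in\mM$ and identify $G_3$ from $\dot{E_3}=\langle G_3,\dot\gamma\rangle$. As in the proofs of Theorems~\ref{thm:lengthgrad} and~\ref{thm:torsiongrad}, the workhorse identities are
\[
\nabla\dot\gamma=\dot T\,dx+T\,d\dot x,\qquad d\dot x=(\nabla\dot\gamma,T),
\]
which together yield $\dot T=\dot\gamma'-T(\dot\gamma',T)$, along with the variation--covariant-derivative commutator
\[
(\nabla T)\udot=\nabla\dot T+R(\dot\gamma,T)T\,dx
\]
applied to $\nabla T=T'\,dx$. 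Since $E_3$ is $\Diff_\tau(M)$-invariant, $G_3$ is automatically normal to $\gamma'$, so tangential components may be discarded throughout.

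Writing $E_3=\tfrac12\int_I (T',T')\,dx$, I would split
\[
\dot{E_3}=\int_I (T',\dot{T'})\,dx+\tfrac12\int_I |T'|^2\,d\dot x,
\]
and then solve the commutator identity for $\dot{T'}\,dx=\nabla\dot T+R(\dot\gamma,T)T\,dx-T'\,d\dot x$. This turns the first integral into three pieces: a pure curvature contribution $\int\langle T',R(\dot\gamma,T)T\rangle\,dx$, a Euclidean piece $\int(T',\nabla\dot T)$, and a correction $-\int|T'|^2\,d\dot x$ which combines with the original $\tfrac12\int|T'|^2\,d\dot x$ to leave $-\tfrac12\int|T'|^2\,d\dot x$. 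The curvature piece is already in final form: the standard symmetries of $R$ together with the skew-adjointness of $R(X,Y)$ give $\langle T',R(\dot\gamma,T)T\rangle=-(\dot\gamma,R(T,T')T)$, producing the $-R(\gamma',\gamma'')\gamma'$ summand in $G_3$.

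The Euclidean piece $\int(T',\nabla\dot T)$ is treated by two successive applications of Stokes' Theorem on the fundamental domain $I$ (all boundary terms cancel by the $\tau$-compatibility of $\dot\gamma$, $T$, and $T'$), yielding $-\int(T'',\dot T)\,dx$ and then, after substituting $\dot T=\dot\gamma'-T(\dot\gamma',T)$ and using the elementary identity $(\gamma''',T)=-|\gamma''|^2$ obtained by differentiating $(\gamma'',\gamma')=0$, terms of the desired fourth-order form plus several $(\dot\gamma',T)$-type contributions. Collecting the latter from their three sources---the leftover $-\tfrac12|T'|^2\,d\dot x$, the tangential correction in $\dot T$, and the new boundary terms converted via $d\dot x=d(\dot\gamma,T)-(\dot\gamma,T')\,dx$---and applying Stokes once more, the coefficient $\tfrac32$ emerges in front of $|\gamma''|^2\gamma'$, giving
\[
\dot{E_3}=\int_I\bigl(\dot\gamma,(\gamma'''+\tfrac32|\gamma''|^2\gamma')'-R(\gamma',\gamma'')\gamma'\bigr)\,dx.
\]
For a space form of constant sectional curvature $K$, the identity $R(X,Y)Z=K((Y,Z)X-(X,Z)Y)$ combined with $|\gamma'|=1$ and $(\gamma',\gamma'')=0$ reduces $R(\gamma',\gamma'')\gamma'$ to a scalar multiple of $\gamma''$, which, since $K$ is constant, can be absorbed into the outer derivative as $(K\gamma')'$ to yield the claimed simplified form.

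The main obstacle is the bookkeeping of the $(\dot\gamma',T)$-type terms: because $dx$ itself depends on $\gamma$, the operations $\udot$ and ${}'$ do not commute, and three distinct sources of such terms must be carefully tracked through two integrations by parts before they combine into the single nonlinear correction $\tfrac32|\gamma''|^2\gamma'$ inside the outer derivative. The specialization to constant curvature is conceptually routine but requires attention to the sign convention of $R$ fixed by the commutator identity used in Theorem~\ref{thm:torsiongrad}.
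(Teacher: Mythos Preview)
Your proposal is correct and follows essentially the same route as the paper's proof: both compute $(T')\udot$ from the commutator $(\nabla T)\udot=\nabla\dot T+R(\dot\gamma,d\gamma)T$ applied to $\nabla T=T'\,dx$, obtain the same three-term decomposition $(\nabla\dot T,T')-\tfrac12|T'|^2(\nabla\dot\gamma,T)-(R(T,T')T,\dot\gamma)\,dx$, and then integrate $(\nabla\dot T,T')$ by parts twice using $\dot T=\dot\gamma'-T(\dot\gamma',T)$ and $(T,T'')=-|T'|^2$ to extract the $\tfrac32|\gamma''|^2\gamma'$ term. One minor remark: your early comment that ``tangential components may be discarded throughout'' is not actually used and would in fact be unhelpful here, since the nonlinear term enters precisely through the tangential part $\tfrac32|\gamma''|^2\gamma'$ inside the outer derivative before the whole expression becomes normal.
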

\begin{proof}
Let $\gamma_t\in\mM$  be a variation of $\gamma$. Replacing $\gamma'=T$ in the integrant of $E_3$, we obtain
\[
\tfrac{1}{2}(|T'|^2dx)\udot=((T')\udot,T')dx+\tfrac{1}{2}|T'|^2 d\dot{x}
\]
where $d\dot{x}=(\dot{\gamma}',T)dx$ from \eqref{eq:dotgamma}. To evaluate the first term on the right hand side, we take the $t$-derivative of $\nabla T=T' dx$ to calculate $(T')\udot$. This gives 
\[
(\nabla T)\udot=(T')\udot dx+ T'd\dot{x}
\]
and, commuting $\nabla$ and $t$-derivatives,
\[
(\nabla T)\udot=\nabla\dot{T} + R(\dot{\gamma},d\gamma)T
\]
 accruing a curvature term. Together with our formula for $d\dot{x}$, this provides the expression
\[
(T')\udot=\dot{T}'+R(\dot{\gamma},T)T-T'(\dot{\gamma}',T)\,.
\]
Therefore,
\[
\tfrac{1}{2}(|T'|^2dx)\udot=(\nabla \dot{T},T')-\tfrac{1}{2}|T'|^2(\nabla\dot{\gamma},T)-(R(T,\nabla T)T,\dot{\gamma})\,,
\]
where we used the symmetries of the Riemannian  curvature $R$ and $\nabla T=T'dx$.  It remains to calculate modulo exact $1$-forms
\begin{align*}
(\nabla \dot{T},T')&=d(\dot{T},T')-(\dot{T}dx, T'')\equiv-(\nabla\dot{\gamma},T'')+(\nabla\dot{\gamma},T)(T,T'')\\
&=-d(\dot{\gamma},T'')+(\dot{\gamma},\nabla T'')-(\nabla\dot{\gamma},T)|T'|^2\,,
\end{align*}
where we used \eqref{eq:dotgamma} and the unit length of $T$. Inserting this last expression yields
\begin{align*}
\tfrac{1}{2}(|T'|^2 dx)\udot & =(\dot{\gamma},\nabla T'')-\tfrac{3}{2} |T'|^2(\nabla\dot{\gamma},T)-(R(T,\nabla T)T,\dot{\gamma})\\
&\equiv (\nabla ( T''+\tfrac{3}{2} |T'|^2 T)- R(T,\nabla T)T, \dot{\gamma})\,.
\end{align*}
Integrating over a fundamental domain $I\subset M$ and using Stokes' Theorem, we obtain
\begin{align*}
\dot{E_3}&=\int_I ((T''+\tfrac{3}{2} |T'|^2 T)'- R(T,T')T, \dot{\gamma})dx\\
&=\langle(T''+\tfrac{3}{2} |T'|^2 T)'- R(T,T')T,\dot{\gamma}\rangle\,,
\end{align*}
which gives the stated result for the variational gradient $G_3$. The expression of $G_3$ for a space form $N$ derives from the
special form of the curvature tensor $R$ in that case.
\end{proof}
\subsection{Flux functional}
There is  another, somewhat less known, Hamiltonian which will play a role in isoperimetric characterizations of critical points of the functionals $E_k$. If $V\in\Gamma(TN)$ is a volume preserving vector field on $N$, for example a Killing field, then $i_V\det\in\Omega^2(N,\R)$ is a closed $2$-form which has a primitive $\alpha_V\in\Omega^1(N,\R)$, provided the space form $N$ has no second cohomology. We additionally assume  that we can choose the primitive $\alpha_V$ invariant under the monodromy $h$, that is
$h^*\alpha_V=\alpha_V$. In particular, this will imply that the vector field $V$ is $h$-related to itself. Then we can integrate the primitive  
$\alpha_{V}$ along $\gamma$ over a fundamental domain $I\subset M$ to obtain the {\em flux functional} 
\begin{equation*}
\label{eq:flux}
E_{V}=\int_I \gamma^*\alpha_{V}\,.
\end{equation*} 
It is worth noting that on closed curves the flux functional is indeed the flux of the vector field $V$ through any disk spanned by $\gamma$. The primitive $\alpha_{V}$ of $i_{V}\det$ is defined only up to  a closed $1$-form and the value of $E_{V}$ depends on this choice. When calculating the variational gradient $G_V$ of $E_V$ only homotopic curves (of the same monodromy) play a role, along which the value of $E_V$ is independent of the choice of primitive by Stokes' Theorem.  
\begin{The}\label{thm:fluxgrad}
The variational and symplectic gradients of the flux functional $E_V$ are given by
\[
G_V=\gamma'\times (V\circ\gamma)\quad\text{and}\quad Y_V\equiv V\circ \gamma \mod C^{\infty}(\mM,\R)T\,.
\]
The Hamiltonian flow of $E_V$ is given by moving  an initial curve $\gamma$ along the flow of the vector field $V$ on $N$. 
\end{The}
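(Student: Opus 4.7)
The plan is to differentiate $E_V=\int_I\gamma^*\alpha_V$ along a variation $\gamma_t\in\mM$ with $\dot\gamma\in T_\gamma\mM$, using the defining property $d\alpha_V=i_V\det$, and then read off $G_V$ via the $L^2$-pairing. The standard formula for the variation of a pullback gives
\[
\bigl(\gamma_t^*\alpha_V\bigr)\udot=\gamma^*\bigl(i_{\dot\gamma}\,d\alpha_V\bigr)+d\bigl(\gamma^*i_{\dot\gamma}\alpha_V\bigr),
\]
so after integration over the fundamental domain $I\subset M$ the exact piece drops out. Here the $\tau$-compatibility must be checked: the assumption $h^*\alpha_V=\alpha_V$ means $\gamma^*\alpha_V$ and the boundary term $\gamma^*i_{\dot\gamma}\alpha_V$ are both $\tau$-periodic on $M$, so Stokes applied to $\partial I$ does kill the exact term. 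This is the one point where the monodromy hypothesis really enters.

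Next I would unravel the remaining piece. Plugging in $d\alpha_V=i_V\det$ and using the basic triple-product identity $\det(A,B,C)=(A,B\times C)$, evaluation of the $1$-form $\gamma^*(i_{\dot\gamma}i_V\det)$ on $\tfrac{d}{dx}\in\Gamma(TM)$ yields
\[
\det\!\bigl(V\circ\gamma,\dot\gamma,\gamma'\bigr)=\bigl(\dot\gamma,\gamma'\times(V\circ\gamma)\bigr).
\]
Integrating over $I$ therefore gives
\[
\dot E_V=\int_I\bigl(\dot\gamma,\gamma'\times(V\circ\gamma)\bigr)\,dx=\bigl\langle\gamma'\times(V\circ\gamma),\dot\gamma\bigr\rangle,
\]
which identifies $G_V=\gamma'\times(V\circ\gamma)$. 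Since $G_V$ is manifestly normal to $T=\gamma'$, equation \eqref{eq:grad2sgrad} applies and yields
\[
Y_V\equiv -T\times G_V=-\gamma'\times(\gamma'\times V\circ\gamma)=V\circ\gamma-(\gamma',V\circ\gamma)\,\gamma'\equiv V\circ\gamma\mod C^\infty(\mM,\R)T,
\]
using $|\gamma'|=1$ and the double cross product identity.

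For the last assertion I would argue as follows. The ODE on $\mM$ defining the Hamiltonian flow of $E_V$ is $\dot\gamma=Y_V(\gamma)$, and modulo the reparametrization action of $\Diff_\tau(M)$ this reads $\dot\gamma(x)=V(\gamma(x))$. This is exactly the pointwise lift of the flow of $V$ on $N$, so if $\phi_t^V$ denotes the one-parameter group generated by $V$ on $N$, the curve $\phi_t^V\circ\gamma$ solves the equation and, thanks to $V$ being $h$-related to itself (a consequence of $h^*\alpha_V=\alpha_V$), it again lies in $\mM_h$. The main technical obstacle is the careful handling of the variation-of-pullback identity together with the monodromy/periodicity bookkeeping on $\partial I$; once those are in place the rest is linear algebra with the cross product.
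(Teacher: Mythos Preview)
Your argument is correct and follows essentially the same route as the paper: compute the variation of the pullback via the Cartan formula, drop the exact term by Stokes over a fundamental domain, and read off $G_V=\gamma'\times(V\circ\gamma)$ before applying \eqref{eq:grad2sgrad}. You are in fact slightly more explicit than the paper on two points---the $\tau$-periodicity bookkeeping for the boundary term and the identification of the Hamiltonian flow with the ambient flow of $V$---both of which the paper leaves implicit.
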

\begin{proof}
If $\gamma_t\in\mM$ is a variation of $\gamma$, then 
\[
(\gamma_t^*\alpha_V)\udot=d i_{\dot{\gamma}}\alpha_V + i_{\tfrac{d}{dt}}(\gamma^* d\alpha_V)=d i_{\dot{\gamma}}\alpha_V +\det(V\circ\gamma,\dot{\gamma},d\gamma)\,,
\]
where we used the Cartan formula for the Lie derivative and $i_V\det=d\alpha_V$. Since 
\[
\det(V\circ\gamma,\dot{\gamma},d\gamma)=(d\gamma\times V\circ\gamma,\dot{\gamma})
\]
Stokes' Theorem gives us 
\[
(G_V)\udot=\int_I(\gamma'\times V\circ\gamma,\dot{\gamma})dx=\langle \gamma'\times V\circ\gamma, \dot{\gamma}\rangle\,,
\]
and we read off the formula for the variational gradient $G_V$. Applying \eqref{eq:grad2sgrad}, we obtain the symplectic gradient $Y_V=V\circ\gamma$. 
 \end{proof}
 As an instructive example, we look at the special case $N=\R^3$, where there are two obvious choices for $V$. For unit length $v\in\R^3$, we have the infinitesimal translation $V(p)=v$ and infinitesimal rotation $V(p)=v\times p$ giving rise to two more Hamiltonians $E_V$.
 The monodromy $h$ for the space of curves $\mM$ in $\R^3$ is an orientation preserving Euclidean motion $h=Ap+a$.
 The condition that $V$ is $h$-related to itself means that $v$ is an eigenvector of the rotational monodromy $A$.
 To simplify notation, we choose the origin of $\R^3$ to lie on the axis $\R v$. We can easily find the $h$-invariant primitives $\alpha_V\in\Omega^1(\R^3,\R)$ of $i_V\det$: for an infinitesimal translation $\alpha_V=\det(v,p,dp)$
and for an infinitesimal rotation $\alpha_V=\tfrac{1}{2}|p-(p,v)v|^2(v,dp)$.
 \begin{The}
 Let $N=\R^3$ and let $V(p)=v$ be an infinitesimal translation with unit length $v\in\R^3$ an eigenvector of the rotational part of the monodromy $h=Ap+a$. Then the flux functional becomes
 \[
 E_{-1}=\tfrac{1}{2}\int_{I}(\gamma\times d\gamma, v)\,.
 \]
 The vector $\tfrac{1}{2}\int_{I}\gamma\times d\gamma$, in reference to Kepler, is called the {\em area vector} of $\gamma$ and we call $E_{-1}$ the {\em area functional}. Indeed, for a closed curve $E_{-1}$ measures the signed enclosed area 
of the orthogonal projection of the curve $\gamma\in\mM$ onto the plane $v^{\perp}$. 
 
The variational and symplectic gradients of $E_{-1}$ are given by
 \[
G_{-1}=\gamma'\times v\quad\text{and}\quad Y_{-1}\equiv v \mod C^{\infty}(\mM,\R)T\,.
\]
\end{The}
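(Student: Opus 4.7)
The plan is to recognize this result as a direct specialization of Theorem~\ref{thm:fluxgrad} once we identify the explicit primitive $\alpha_V$. So the proof breaks into three pieces: (a) verify that the $1$-form $\alpha_V := \tfrac{1}{2}\det(v,p,dp)$ is a monodromy-invariant primitive of $i_V\det$; (b) substitute this primitive into the definition of $E_V$ to obtain the stated formula for $E_{-1}$; (c) invoke Theorem~\ref{thm:fluxgrad} with $V(p)=v$ to read off the gradients, and explain the area interpretation via Stokes.

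For step~(a), I would compute $d\alpha_V$ directly. Writing $\alpha_V|_p(w) = \tfrac{1}{2}\det(v,p,w)$, the exterior derivative satisfies $d\alpha_V(u,w) = \tfrac{1}{2}\bigl(\det(v,u,w) - \det(v,w,u)\bigr) = \det(v,u,w) = (i_V\det)(u,w)$, as required. For $h$-invariance, the rotational part $A\in SO(3)$ has $v$ as a real unit eigenvector and hence fixes it, so $A^{-1}v=v$ and $\det(v,Ap,Aw) = \det(A^{-1}v,p,w)\det(A) = \det(v,p,w)$. With the origin chosen on the axis $\R v$ as in the excerpt, the translation part $a$ of $h$ is parallel to $v$, so $\det(v,a,Aw)=0$ for every $w$; adding these observations gives $h^*\alpha_V = \alpha_V$.

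Step~(b) is now immediate: pulling back along $\gamma$ and using the cyclic invariance of the triple product,
\[
\gamma^*\alpha_V = \tfrac{1}{2}\det(v,\gamma,d\gamma) = \tfrac{1}{2}(\gamma\times d\gamma,\,v),
\]
which upon integration over $I$ yields the stated expression for $E_{-1}$. The area interpretation follows because, for a closed curve bounding a piecewise smooth disk $D\subset\R^3$, Stokes' Theorem identifies $\tfrac{1}{2}\int_\gamma \gamma\times d\gamma$ with the classical Kepler area vector $\int_D d\vec A$; taking inner product with the unit vector $v$ and projecting $D$ orthogonally onto $v^\perp$ (which preserves the value of the flux of the constant vector field $v$) recovers the signed enclosed area of the planar projection of $\gamma$.

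Finally, step~(c) is a direct application of Theorem~\ref{thm:fluxgrad} with $V\circ\gamma\equiv v$, giving $G_{-1} = \gamma'\times v$ and $Y_{-1}\equiv v \mod C^\infty(\mM,\R)T$. The only substantive point in the whole argument is the invariance check in step~(a), and even that reduces to the eigenvector property of $v$ together with the screw-axis normalization of $h$; beyond that, the theorem is essentially bookkeeping on top of Theorem~\ref{thm:fluxgrad}.
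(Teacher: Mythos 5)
Your proposal is correct and follows exactly the route the paper intends: the paper's proof is the one-line remark that the result ``follows immediately from Theorem~\ref{thm:fluxgrad}'' together with the inline identification of the invariant primitive $\alpha_V$, and you have simply filled in the verification of $d\alpha_V=i_V\det$, the $h$-invariance, and the Stokes/area interpretation. Your inclusion of the factor $\tfrac12$ in $\alpha_V=\tfrac12\det(v,p,dp)$ is in fact the correct normalization (the paper's inline formula omits it, though its theorem statement carries it), so your bookkeeping is, if anything, slightly more careful than the source.
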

The proof follows immediately from Theorem~\ref{thm:fluxgrad}.  
\begin{The}
Let $N=\R^3$ and let $V(p)=v\times p$ be an infinitesimal rotation with unit length eigenvector $v\in\R^3$  of the rotational part of the monodromy $h=Ap+a$. Then the flux functional becomes
 \[
 E_{-2}=\tfrac{1}{2}\int_{I}(|\gamma-(\gamma,v)v|^2 d\gamma, v)\,.
 \]
The vector $\tfrac{1}{2}\int_{I}(|\gamma-(\gamma,v)v|^2 d\gamma$ is called the {\em volume vector} of $\gamma$ and we call $E_{-2}$ the {\em volume functional}. Indeed, for a closed curve $E_{-2}$ measures the volume  
of the solid torus of revolution generated by rotating the curve $\gamma\in\mM$  around the axis $\R v$. 
 
The variational and symplectic gradients of $E_{-2}$ are given by
 \[
G_{-2}=\gamma'\times (v\times \gamma)\quad\text{and}\quad Y_{-2}\equiv v\times\gamma\mod C^{\infty}(\mM,\R)T\,.
\]
\end{The}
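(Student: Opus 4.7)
The plan is to deduce everything from Theorem~\ref{thm:fluxgrad} by specializing to $V(p) = v \times p$. First, I would check the two hypotheses of that theorem. Volume preservation is immediate: $V$ is the Killing field of the one-parameter family of rotations about $\R v$, hence divergence-free. For $V$ to be $h$-related to itself, given that the origin is chosen on the axis $\R v$ (so $a \parallel v$) and $Av = v$, I would verify directly that $A(v \times p) = (Av) \times (Ap) = v \times Ap = v \times (Ap + a)$ since $v \times a = 0$.

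Next, I would verify that the proposed $\alpha_V = \tfrac{1}{2}|p - (p,v)v|^2 (v, dp)$ is an $h$-invariant primitive of $i_V \det$. For $h$-invariance, note that $|p - (p,v)v|$ is the distance to the axis, which is preserved by any isometry fixing $\R v$, and $(v, dp)$ is invariant since $A^{\top} v = v$ and $a$ has zero derivative. To check $d\alpha_V = i_V\det$, I would set $r^2 := |p|^2 - (p,v)^2$ so that $\tfrac{1}{2}dr^2 = (p - (p,v)v, dp)$, obtaining
\[
d\alpha_V = (p - (p,v)v, dp) \wedge (v, dp).
\]
Evaluating on $(\xi,\eta)$ and using $(v \times p,\xi \times \eta)=(v,\xi)(p,\eta)-(v,\eta)(p,\xi)$ together with the decomposition $p=(p-(p,v)v)+(p,v)v$, the component along $v$ cancels by antisymmetry, and what remains matches $\det(v \times p,\xi,\eta) = (i_V\det)(\xi,\eta)$.

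Once the hypotheses are in place, Theorem~\ref{thm:fluxgrad} produces the functional $E_{-2} = \int_I \gamma^*\alpha_V = \tfrac{1}{2}\int_I |\gamma - (\gamma,v)v|^2 (v, d\gamma)$ and the gradients $G_{-2} = \gamma' \times (v \times \gamma)$ and $Y_{-2} \equiv v\times\gamma$ by substituting $V\circ\gamma = v\times\gamma$. For the geometric interpretation as the volume of the solid torus generated by revolving $\gamma$ around the axis $\R v$, I would argue as follows for a closed curve: letting $D$ be any disk spanning $\gamma$, Stokes' Theorem gives $E_{-2} = \int_D i_V\det = \int_D (V,n)\,dA$; as $D$ is rotated through a full turn, it sweeps the solid torus, and integrating the infinitesimal swept volume $(V,n)\,dA$ over angle $[0,2\pi]$ shows the torus volume is (up to a $2\pi$ factor) $E_{-2}$.

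I do not anticipate a serious obstacle. The only mild subtleties are bookkeeping the $h$-invariance of the primitive under the screw motion (dispatched by the axis condition) and handling degenerate cases in the volume interpretation (curves meeting the axis or producing a self-intersecting torus of revolution), for which the statement should be read as an equality of oriented volumes.
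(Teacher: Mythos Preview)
Your approach is correct and coincides with the paper's own proof, which simply invokes Theorem~\ref{thm:fluxgrad} together with ``an elementary calculation involving surfaces of revolution''; you have merely supplied the details of that calculation. One minor caution: in your check that $d\alpha_V=i_V\det$, the two sides actually differ by a sign (a coordinate computation with $v=e_3$ gives $d\alpha_V=-i_V\det$ for the stated $\alpha_V$), but this is a harmless sign convention already present in the paper and does not affect the gradients, which are read off directly from $V\circ\gamma$.
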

Again, the proof follows from Theorem~\ref{thm:fluxgrad} and an elementary calculation involving surfaces of revolution.

It will be convenient to add the {\em vacuum\,}, the trivial Hamiltonian $E_0=0$ with trivial variational gradient $G_0=0$ and symplectic gradient $Y_0=\gamma'$.  The Hamiltonian flow generated by $Y_0$ on $\mathcal{M}$  is reparametrization and thus descends to the  trivial flow on the quotient $\mathcal{M}/\Diff_{\tau}(M)$.
\begin{figure}[ht]
\centering
\includegraphics[width=.7\columnwidth]{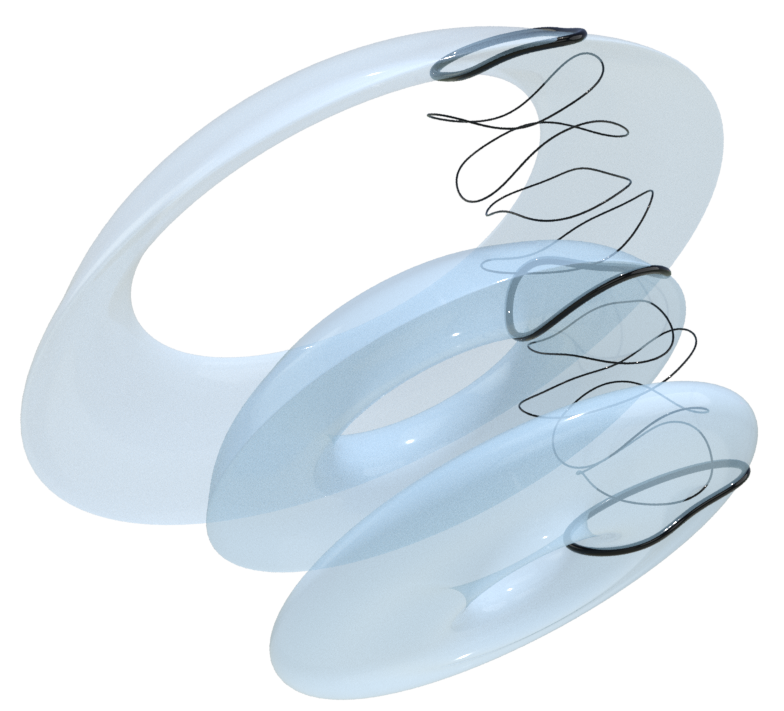}
\caption{A closed curve $\gamma$ moving according to vortex filament flow and the full tori obtained from $\gamma$ by rotation a disk bounded by $\gamma$ around a fixed axis. All these full tori have the same volume.
\label{fig:volumefunctional}}
\end{figure}

\subsection{Hierarchy of flows}
One of the reasons to list those functionals, besides their historical significance, is the curious pattern 
\begin{equation}
\label{eq:prerecursion}
G_{k+1}=-Y_k'
\end{equation}
their gradients seem to exhibit for curves in Euclidean 3-space $\R^3$, at least for $-2\leq k\leq 1$.  The pattern breaks at $k=2$ since $G_3$, as a variational gradient of the geometrically given energy functional $E_3$, is normal to $\gamma$ but $Y_2'=-\gamma^{\textrm{(iv)}}$ is not. This suggests that our initial choice of $Y_2$ should be augmented by a tangential component in order to have $Y_2'$ normal.  
Now for any vector field $Y\in\Gamma(T\mM)$, we have $d\dot{x}=(\nabla\dot{\gamma},T)$ along its flow $\dot{\gamma}=Y(\gamma)$. Therefore, the condition that $Y'$ is normal to $\gamma$ is equivalent to the fact that all curves $\gamma_t$ of the flow 
have the same induced volume form $dx_t=dx$, that is, the same parametrization.
In the case of $Y_2$ this leads to the tangential correction term $-\tfrac{3}{2}|\gamma''|^2\gamma'$. 
Therefore, by combining \eqref{eq:sgrad2grad} with the recursion equation \eqref{eq:prerecursion}, we arrive at the recursion 
\begin{equation}
\label{eq:recursion}
Y_{k}'+T\times Y_{k+1}=0\,,\quad Y_0=T
\end{equation}
defining an infinite  hierarchy of vector fields $Y_k\in\Gamma(T\mM)$.
\begin{The}[\cite{langer1999,yasui1998}]\label{thm:recursionsolution}
The recursion \eqref{eq:recursion} has the explicit solution
\[
Y_{k+1}=T\times Y_{k}'-\frac{1}{2}\sum_{i=1}^{k} (Y_i,Y_{k-i+1})T\,,\quad k\geq 0\,.
\]
In particular, $Y_k$ is a homogeneous differential polynomial of degree $k$ in $T$, and thus a differential polynomial of degree $k+1$ in $\gamma$. Moreover, if $\gamma$ is contained in a plane $E\subset \R^3$, then $Y_{2k}(\gamma)\colon M\to E$ is planar, whereas $Y_{2k+1}(\gamma)\colon M\to E^{\perp}$ is normal. In other words, the sub-hierarchy of even flows preserves planar curves.
\end{The}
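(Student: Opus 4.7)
The plan is to proceed by induction on $k$, verifying simultaneously the recursion \eqref{eq:recursion} and the auxiliary identity $(Y_k',T)=0$ (which is the condition that the flow generated by $Y_k$ preserves the arclength parametrization, and is precisely what fixes the tangential ambiguity of the recursion). For the base case, $Y_0=T$ satisfies $(T',T)=0$ and $Y_1=T\times T'$ (the sum in the formula being empty) verifies $T\times Y_1=-T'$. Assume now that for all $j\leq k$ the formula holds, $Y_j'+T\times Y_{j+1}=0$, and $(Y_j',T)=0$.

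To verify the recursion at level $k+1$, apply $T\times$ to the formula. Since $T\times T=0$ the tangential correction disappears, and by the double cross product identity $T\times(T\times V)=(T,V)T-V$ (using $|T|=1$) combined with the inductive assumption $(T,Y_k')=0$, one obtains $T\times Y_{k+1}=-Y_k'$, as required.

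The nontrivial step is to close the induction by showing $(Y_{k+1}',T)=0$. I would compute $(Y_{k+1},T)'$ and $(Y_{k+1},T')$ separately. Differentiating the explicit formula and substituting $Y_i'=-T\times Y_{i+1}$ and $Y_{k-i+1}'=-T\times Y_{k-i+2}$ (both valid inductively), the double sum $-\tfrac12\sum_{i=1}^k[(Y_i',Y_{k-i+1})+(Y_i,Y_{k-i+1}')]$ reorganizes into two sums of triple products with shifted indices; all interior terms cancel and only the boundary contributions survive, yielding
\[
(Y_{k+1},T)'=-\det(T,Y_1,Y_{k+1}).
\]
For the second piece, use the base recursion $T'=-T\times Y_1$ together with cyclic invariance of the triple product to compute $(Y_{k+1},T')=-\det(Y_{k+1},T,Y_1)=-\det(T,Y_1,Y_{k+1})$. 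Subtracting, $(Y_{k+1}',T)=(Y_{k+1},T)'-(Y_{k+1},T')=0$, which closes the induction. (Equivalently, the formula is precisely the unique normalization forcing the generating series $Y=\sum Y_k\lambda^{-k}$ to satisfy $|Y|^2\equiv 1$; the identity $(Y_k',T)=0$ is then the coefficient-wise statement that $|Y|^2$ is constant along the curve, which follows automatically from $Y'=\lambda Y\times T$.)

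The degree statement follows by a straightforward induction: in the formula, $T\times Y_k'$ has degree $0+(k+1)=k+1$ in $T$, while each summand $(Y_i,Y_{k-i+1})T$ has degree $i+(k-i+1)+0=k+1$; since $T=\gamma'$, this means degree $k+2$ in $\gamma$. For the planar case, induct on the parity statement that $Y_{2\ell}(\gamma)\subset E$ and $Y_{2\ell+1}(\gamma)\subset E^\perp$. If $Y_k\subset E$ (so $k$ even), then $Y_k'\subset E$, hence $T\times Y_k'\subset E^\perp$; in the sum, each pair $(Y_i,Y_{k-i+1})$ couples vectors of opposite parity (one in $E$, one in $E^\perp$), so it vanishes, leaving $Y_{k+1}=T\times Y_k'\subset E^\perp$. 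If instead $Y_k\subset E^\perp$ ($k$ odd), then $Y_k'\subset E^\perp$ and $T\times Y_k'\subset E$; the pairs $(Y_i,Y_{k-i+1})$ now couple vectors of the same parity, so the sum is a scalar times $T\in E$, giving $Y_{k+1}\subset E$. The main obstacle is the careful bookkeeping of signs in the telescoping computation for $(Y_{k+1},T)'$, but no further ingredient is needed beyond the inductive recursion and the identification $T'=-T\times Y_1$.
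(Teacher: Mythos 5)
Your proof is correct, and it argues in the direction the statement actually requires (that the displayed formula \emph{is} a solution of \eqref{eq:recursion}), whereas the paper's proof runs the converse way: it posits $Y_k=T\times Y_{k-1}'+f_kT$ with $f_k=(Y_k,T)$, proves the pairing identity $(G_k,Y_l)=-(Y_k,G_l)$, telescopes it into \eqref{eq:telescoped}, and then reads off $f_k'=(G_k,Y_1)=-\sum_{i=1}^{k-1}(Y_{k-i},Y_i)'-f_k'$, which determines $f_k$ up to a constant of integration; as a by-product the paper obtains \eqref{eq:telescoped}, which it reuses in Corollary~\ref{cor:precommute}. You instead run a direct induction carrying the invariant $(Y_k',T)=0$ (the arclength-preservation condition that fixes the tangential ambiguity), verify the recursion with the double cross product identity, and close the induction by telescoping the triple products $\det(T,Y_{i+1},Y_{k-i+1})$ down to the single boundary term $-\det(T,Y_1,Y_{k+1})$, which is exactly cancelled by $(Y_{k+1},T')$; I checked the index shifts and signs and they work out. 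The underlying cancellation is the same in both arguments, but your version never needs the variational gradients $G_k$ and is self-contained at the level of the $Y_k$ and the cross product; your closing observation that the tangential components are precisely the normalization $(Y,Y)\equiv 1$ of the generating loop is the cleanest summary and is exactly what the paper later invokes as \eqref{eq:Yunit}. The degree and planarity statements, which the paper dispatches with ``simple induction arguments'', are handled correctly by your parity bookkeeping (your ``degree'' is the weighted degree in which differentiation adds one and $T$ counts zero, consistent with the theorem's usage).
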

\begin{proof}
Our prescription for the flows $Y_k$ demands a tangential correction to \eqref{eq:grad2sgrad} of the form
\[
Y_k=-T\times G_k+f_k T=T\times Y_{k-1}'+f_k T
\]
in order for \eqref{eq:prerecursion} to hold. Since $f_k=(Y_k,T)$ the first few $f_k$ can easily be read off: $f_0=1$, $f_1=0$, and $f_2=-\tfrac{1}{2}|T'|^2$. To calculate a general expression, we first note that
\[
(G_k,Y_l)=-(G_k,T\times G_l)=(T\times G_k,G_l)=-(Y_k,G_l)
\]
where we used \eqref{eq:grad2sgrad}. On the other hand, using \eqref{eq:prerecursion}, we obtain
\begin{align*}
(G_k,Y_l)&=(-Y_{k-1}',Y_l)=-(Y_{k-1},Y_l)'+(Y_{k-1},Y_l')=-(Y_{k-1},Y_l)'-(Y_{k-1},G_{l+1})\\
&=-(Y_{k-1},Y_l)'+(G_{k-1}, Y_{l+1})\,.
\end{align*}
Recalling $G_0=0$, this relation telescopes to
\begin{equation}\label{eq:telescoped}
(G_k,Y_l)=-\sum_{i=1}^{k}(Y_{k-i},Y_{l+i-1})'\,,
\end{equation}
which, together with \eqref{eq:prerecursion}, \eqref{eq:sgrad2grad} and $T=Y_0$, implies
\begin{align*}
f_k'&=(Y_k,T)'=(Y_k',T)+(Y_k,T')=(Y_k,Y_0')=-(Y_k,G_1)=(G_k,Y_1)\\
&=-\sum_{i=1}^{k-1}(Y_{k-i},Y_{i})' - (Y_0,Y_k)'=-\sum_{i=1}^{k-1}(Y_{k-i},Y_{i})'- f_k'\,.
\end{align*}
This shows that (up to a constant of integration) $f_k=-\tfrac{1}{2}\sum_{i=1}^{k-1}(Y_{k-i},Y_{i})$. 
The remaining statements of the theorem follow from simple induction arguments.
\end{proof}
At this stage it is worth to discuss some  of the immediate consequences of the last theorem.
\begin{Cor}\label{cor:precommute}
If  $Y_k$ are symplectic gradients for Hamiltonians $E_k\in C^{\infty}(\mM,\R)$, then $E_k$  is constant along every flow $Y_l$ for $k,l\geq=-2$. 
\end{Cor}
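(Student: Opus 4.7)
The plan is to reformulate the corollary as a Poisson-commutation statement. Since $E_k$ evolves along the flow $\dot\gamma = Y_l$ by $\tfrac{d}{dt}E_k = dE_k(Y_l) = \sigma(Y_k, Y_l)$, it suffices to prove $\sigma(Y_k, Y_l) = 0$ for all $k, l \geq -2$. Using \eqref{eq:crossproduct} together with $G_k = T \times Y_k$ from \eqref{eq:sgrad2grad}, one has
\[
\sigma(Y_k, Y_l) = \langle T \times Y_k, Y_l\rangle = \langle G_k, Y_l\rangle = \int_I (G_k, Y_l)\, dx\,.
\]
By the antisymmetry of $\sigma$ we may assume $k \geq l$, and the substantive case is $k \geq 1$.

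In that case, the telescoping identity \eqref{eq:telescoped},
\[
(G_k, Y_l) = -\sum_{i=1}^{k} (Y_{k-i}, Y_{l+i-1})'\,,
\]
already established en route to Theorem~\ref{thm:recursionsolution}, shows that $(G_k, Y_l)\,dx = -\sum_i d(Y_{k-i}, Y_{l+i-1})$ is exact on $M$. Integrating over a fundamental domain $I = [x_0, x_1]$ and applying Stokes reduces $\sigma(Y_k, Y_l)$ to a sum of boundary differences $(Y_{k-i}, Y_{l+i-1})(x_1) - (Y_{k-i}, Y_{l+i-1})(x_0)$. Each $Y_m \in T_\gamma\mM$ satisfies $\tau^*Y_m = h Y_m$, and since $h$ is an isometry the scalar $(Y_{k-i}, Y_{l+i-1})$ is $\tau$-invariant, so every boundary term cancels and $\sigma(Y_k, Y_l) = 0$.

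The remaining edge cases sit in $k, l \in \{-2, -1, 0\}$: pairs involving $0$ are immediate from $G_0 = 0$; the diagonal vanishes by antisymmetry of $\sigma$; and the only non-trivial case $(k, l) = (-1, -2)$ is handled directly using the vector identity $(a \times b)\cdot(c \times d) = (a \cdot c)(b \cdot d) - (a \cdot d)(b \cdot c)$, which yields $(\gamma' \times v, v \times \gamma) = \tfrac{1}{2}\bigl((v, \gamma)^2 - |\gamma|^2\bigr)'$. Again this is exact, and the resulting boundary terms cancel because the translational part of $h = Ap + a$ must lie along the axis $\R v$ (a prerequisite for $\alpha_V$ to be $h$-invariant), forcing $(v, \gamma)^2 - |\gamma|^2$ to be $\tau$-invariant.

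The main obstacle is not the identity \eqref{eq:telescoped} itself, which is already in place, but that its telescoping terminates only at $G_0 = 0$ and therefore does not automatically cover the negative-index pair $(-1, -2)$. Conceptually, the uniform vanishing of $\sigma(Y_k, Y_l)$ reflects the interplay between the recursion $G_{k+1} = -Y_k'$, which converts the symplectic pairing into an exact derivative along $\gamma$, and the $\tau$-equivariance of the hierarchy combined with $h$ acting as an isometry, which kills the resulting boundary terms.
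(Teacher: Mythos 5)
Your proof is correct and follows the paper's own route: both reduce $dE_k(Y_l)=\sigma(Y_k,Y_l)$ to $\int_I(G_k,Y_l)\,dx$ and use the telescoped identity \eqref{eq:telescoped} to exhibit the integrand as an exact derivative of $\tau$-invariant quantities, so the integral over a fundamental domain vanishes. The only difference is that you explicitly work out the negative-index cases (in particular $(k,l)=(-1,-2)$ and the role of the translational part of the monodromy lying along $\R v$), which the paper dismisses with ``one can easily derive an analogous relation''; your version is a faithful filling-in of that gap rather than a different argument.
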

\begin{proof}
The relation \eqref{eq:telescoped} implies that 
\[
dE_k(Y_l)=\int_I (G_k,Y_l)dx =0
\]
at least for $k,l\geq 0$. One can easily derive an analogous relation to  \eqref{eq:telescoped} for $-2\leq k,l\leq 0$, which proves the claim.
\end{proof}
As an example, we apply this result to the vortex filament flow $\dot{\gamma}=\gamma'\times\gamma''$  of a closed initial curve $\gamma_0$. Corollary~\ref{cor:precommute} then implies that the enclosed volumes $E_{-2}(\gamma_t)=E_{-2}(\gamma_0)$ of the solid tori of revolution
around the axis $\R v$  generated by $\gamma_t$  are equal for all times $t$ in the existence interval of the flow, see Figure~\ref{fig:volumefunctional}. Similarly, the projected areas $E_{-1}(\gamma_t)$ of the curves $\gamma_t$ are all equal.

Corollary~\ref{cor:precommute} also provides evidence as to why the flows $Y_k$ should commute: assuming $Y_k$ are symplectic flows to geometric, that is, parametrization invariant, Hamiltonians $E_k$, they descend to the quotient $\mM/\Diff_{\tau}(M)$. The latter  is symplectic, at least away from its singularities. By standard symplectic techniques Corollary~\ref{cor:precommute} then implies that the flows on the (non-singular part of the) quotient commute. This would at least show that the flows $Y_k$ on $\mM$ commute modulo $T$. More is true though, as we shall discuss in Section~\ref{sec:Laxflow}, where we relate the flows $Y_k$ to Lax flows on a loop algebra to show that they genuinely commute.

\begin{table}[ht]
  \centering
  \caption{\label{tab:hierarchy} The variational and symplectic gradients of the listed functionals for curves in $\R^3$.}
  \tabulinesep=1mm
  \begin{tabu}{|p{0.35\textwidth}|p{0.25\textwidth}|p{0.285\textwidth}|}
	\hline
    Functional & Variational gradient & Symplectic gradient\\
	\hline
	$E_{-2}=\tfrac{1}{2}\int_{I}(|\gamma-(\gamma,v)v|^2 d\gamma, v)$
	&
	$G_{-2}=\gamma'\times (v\times \gamma)$ & 
	$Y_{-2}=v\times\gamma$ \\
	\hline
	$E_{-1}=\tfrac{1}{2}\int_{I}(\gamma\times d\gamma, v)$ &
	$G_{-1}=\gamma'\times v$ &
	$ Y_{-1}=v$ \\
	\hline
	$E_0 = 0$ & $G_0 = 0$ & $Y_0 = \gamma'$\\
	\hline
	$E_1 = \int_{I} dx$ &
	$G_1 = -\gamma''$ &
	$Y_1 = \gamma'\times\gamma''$\\
	\hline
	$E_2 = \alpha$&
	$G_2 = -\gamma'\times\gamma'''$&
	$Y_2 = -\gamma''' - \tfrac{3}{2}|\gamma''|^2\gamma'$\\
	\hline
	$E_3 = \tfrac{1}{2}\int_{I}|\gamma''|^2dx$&
	$G_3 = (\gamma'''+\tfrac{3}{2}|\gamma''|^2\gamma')'$&
	\makecell[lt]{
	$Y_3 = -\gamma'\times \gamma''''$\\
	$\phantom{Y_3={}}-\tfrac{3}{2}|\gamma''|^2\gamma'\times\gamma''$\\
	$\phantom{Y_3={}}+\det(\gamma',\gamma'',\gamma''')\gamma'$
	}\vspace{-2mm}\\
	\hline
  \end{tabu}
\end{table}
 
\subsection{Constrained critical curves in $\R^3$} 
We continue to denote by $E_k\colon\mM\to\R$ the putative Hamiltonians for the flows $Y_k\in\Gamma(T\mM)$. 
\begin{Def}
A curve $\gamma\in\mathcal{M}$ is critical for the functional $E_{k+1}$ under the constraints
$E_1\dots, E_{k}$, if the constrained Euler--Lagrange equation
\[
G_{k+1}=\sum_{i=1}^{k}c_i G_i
\]
holds for some constants, the Lagrange multipliers,  $c_i\in\R$. 
\end{Def}
The recursion relation \eqref{eq:prerecursion} implies that the condition for an $E_{k+1}$ critical constrained curve is equivalent to 
\begin{equation}\label{eq:criticalY}
Y_{k}=\sum_{i=0}^{k-1}c_i Y_i+ v
\end{equation}
for some $c_i\in\R$, where $v\in\R^3$ necessarily is an eigenvector of the rotational part of the monodromy of $\gamma$. In terms of flows this says that $Y_k$ evolves an initial curve $\gamma$, modulo lower order flows, by a pure translation in the direction of the monodromy axis. Since $G_k=T\times Y_k$ by \eqref{eq:sgrad2grad}, the above is equivalent to 
\[
G_{k}=\sum_{i=0}^{k-1}c_i G_i+ T\times v = \sum_{i=-1}^{k-1}c_i G_i\,.
\]
This last says that $\gamma$ is $E_k$ critical under the  constraints $E_{-1},\dots, E_{k-1}$, where we remember $E_0=0$. Applying this construction once more gives the equivalent condition
\[
G_{k-1}=\sum_{i=-2}^{k-2}c_i G_i\,,
\]
that is to say, $\gamma$ is $E_{k-1}$ critical under the constraints $E_2,\dots, E_{k-2}$.  
\begin{The}
The following statements for a curve $\gamma\in\mM$  are equivalent:
\begin{enumerate}
\item
$\gamma$ is $E_{k+1}$ critical under constraints $E_1,\dots, E_k$.
\item
$\gamma$ is $E_{k}$ critical under constraints $E_{-1},\dots, E_{k-1}$.
\item
$\gamma$ is $E_{k-1}$ critical under constraints $E_{-2},\dots, E_{k-2}$.
\item
The flow $\dot{\gamma}=Y_k$ evolves, modulo a linear combination of lower order flows, by a pure translation in the direction of the axis of the mondoromy of the initial curve $\gamma$.
\item
The flow $\dot{\gamma}=Y_{k-1}$ evolves, modulo a linear combination of lower order flows, by a Euclidean motion with axis along the monodromy of the initial curve $\gamma$.
\end{enumerate}
\end{The}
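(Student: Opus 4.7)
The strategy is to iterate two elementary operations on vector fields along $\gamma$. The first is the recursion $G_{j+1} = -Y_j'$, which holds for all $j \geq -2$: it defines the hierarchy for $j \geq 0$ and is verified by inspection of Table~\ref{tab:hierarchy} for the two remaining indices, since $Y_{-1} = v$ gives $G_0 = 0 = -Y_{-1}'$ and $Y_{-2} = v \times \gamma$ gives $-Y_{-2}' = T \times v = G_{-1}$. The second is the identity $G_j = T \times Y_j$ from \eqref{eq:sgrad2grad}. Each recursion step converts an equation among variational gradients into one among symplectic gradients, up to an integration constant; each $T \times$ step converts a relation among symplectic gradients into one among variational gradients, absorbing any constant $v$ as an extra $G_{-1}$ term via Theorem~\ref{thm:fluxgrad}. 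The five statements of the theorem are five rungs of the resulting ladder.

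For (i) $\Leftrightarrow$ (iv) $\Leftrightarrow$ (ii), I start from the constrained Euler--Lagrange equation $G_{k+1} = \sum_{i=1}^k c_i G_i$ of (i). Applying the recursion rewrites this as $\bigl(Y_k - \sum c_i Y_{i-1}\bigr)' = 0$, hence
\[
Y_k = \sum_{i=0}^{k-1} c_{i+1}\, Y_i + v
\]
for a constant $v \in \R^3$. Since $\tau^* Y_j = A Y_j$ for every $Y_j \in T_\gamma \mM$ while $\tau^* v = v$, one obtains $Av = v$, so $v$ lies along the monodromy axis; this is exactly (iv). Taking the cross product with $T$ and using $T \times v = G_{-1}$ (Theorem~\ref{thm:fluxgrad} with translation $V \equiv v$) yields $G_k = \sum_{i=0}^{k-1} c_{i+1}\, G_i + G_{-1}$, the Euler--Lagrange equation of (ii). Every step reverses.

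The equivalences (ii) $\Leftrightarrow$ (v) $\Leftrightarrow$ (iii) follow by the same procedure shifted one notch down. Applying the recursion to the Euler--Lagrange equation of (ii), together with $G_{-1} = -Y_{-2}'$, integrates to
\[
Y_{k-1} = c_{-1}\, Y_{-2} + \sum_{i=0}^{k-2} c_{i+1}\, Y_i + w
\]
for some constant $w$, again an $A$-eigenvector. Modulo the lower order flows $Y_0, \dots, Y_{k-2}$, the right-hand side is $c_{-1}(v \times \gamma) + w$: infinitesimal rotation about $\R v$ plus translation along it, i.e.\ an infinitesimal Euclidean screw motion with axis $\R v$, which is (v). A further application of $T \times$, using $T \times Y_{-2} = G_{-2}$ and $T \times w = G_{-1}$ for the direction $w$, produces the Euler--Lagrange equation of (iii).

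The argument is pure algebraic bookkeeping once the extended recursion is in hand. The only point requiring care, though not difficulty, is to verify that each integration constant lies in the monodromy-compatible subspace of $T_\gamma \mM$; this is automatic, because $Y_{-1}$ and $Y_{-2}$ are themselves the infinitesimal generators of the Euclidean screw motions with axis $\R v$, which are precisely the isometries of $\R^3$ preserving the monodromy condition $\tau^* \gamma = A\gamma + a$.
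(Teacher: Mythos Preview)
Your proof is correct and follows essentially the same route as the paper: the argument given in the paragraph preceding the theorem statement uses exactly your two elementary operations---integrating the recursion $G_{j+1}=-Y_j'$ to pass from a $G$-relation to a $Y$-relation (picking up an integration constant $v$ in the monodromy axis), and then applying $T\times(-)$ to pass back, with $T\times v=G_{-1}$---and iterates once more to reach (iii). Your treatment is slightly more explicit in verifying the recursion at the negative indices $j=-1,-2$ and in checking monodromy compatibility of the integration constants, but the logical content is the same.
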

From this observation, we obtain a striking isoperimetric characterization of {\em Euler elastica}, that is, curves critical for the elastic energy $E_3$ constrained by the length $E_1$ and total torsion $E_2$. 
\begin{Cor}
The following conditions on a curve $\gamma\in\mM$ are equivalent:
\begin{enumerate}
\item
$\gamma$ is an Euler elastic curve.
\item
$\gamma$ is critical for total torsion with length and area $E_{-1}$ constraints.
\item
$\gamma$ is critical for length with area and volume $E_{-2}$ constraints.
\item
The helicity filament flow $\dot{\gamma}=Y_2$ evolves, modulo the vortex filament flow and reparametrization, by a pure translation in the direction of the axis of the mondoromy of the initial curve $\gamma$.
\item
The vortex filament flow $\dot{\gamma}=Y_1$ evolves, modulo reparametrization, by a Euclidean motion with axis along the monodromy of the initial curve $\gamma$.
 \end{enumerate}
\end{Cor}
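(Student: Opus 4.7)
The plan is to deduce this corollary as a direct specialization of the preceding Theorem to $k=2$. Each of its five equivalent conditions at $k=2$ matches one of the five conditions of the corollary almost verbatim; the only bookkeeping required is to drop the trivial vacuum $E_0=0$ (hence $G_0=0$) from the Lagrange-multiplier lists, and to recognize the low-index symplectic gradients $Y_{-1}=v$ and $Y_{-2}=v\times\gamma$ as the infinitesimal translation and rotation generating a Euclidean motion with axis $\R v$.

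Concretely, condition (i) of the Theorem at $k=2$ reads ``$\gamma$ is $E_3$ critical under $E_1,E_2$,'' which is by definition an Euler elastica. Condition (ii) becomes ``$\gamma$ is $E_2$ critical under $E_{-1},E_0,E_1$'' and reduces, after discarding the vacuous $E_0$-constraint, to criticality for total torsion with length and area constraints. Condition (iii) similarly reduces to criticality for length with area and volume constraints. For (iv), the Theorem asserts that $\dot\gamma=Y_2$ evolves by pure translation along the monodromy axis modulo a linear combination of $Y_0=T$ and $Y_1=\gamma'\times\gamma''$, which are precisely reparametrization and the vortex filament flow.

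The one mildly subtle point is the translation of condition (iii) into the ``Euclidean-motion evolution'' form of (v). Starting from $G_1=c_{-2}G_{-2}+c_{-1}G_{-1}$ and using $G_k=T\times Y_k$ for $k=-2,-1,1$, we obtain $T\times(Y_1-c_{-2}Y_{-2}-c_{-1}Y_{-1})=0$, hence
\[
Y_1 \equiv c_{-2}\,(v\times\gamma)+c_{-1}\,v \pmod{C^{\infty}(\mM,\R)T}\,.
\]
This exhibits $Y_1$, modulo reparametrization, as the infinitesimal generator of a screw motion about the axis $\R v$, i.e., a Euclidean motion whose axis is the monodromy axis. No genuinely new difficulty arises here, since the main obstacle --- the equivalence of the five general conditions --- has already been resolved in the preceding Theorem; the corollary is pure specialization and relabeling.
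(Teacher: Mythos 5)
Your proposal is correct and matches the paper exactly: the paper offers no separate proof of this Corollary, presenting it as the immediate specialization of the preceding Theorem to $k=2$, with the same bookkeeping (dropping the vacuum $E_0$ and reading $Y_{-1}=v$, $Y_{-2}=v\times\gamma$ as the generators of the screw motion about the monodromy axis) that you carry out.
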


\section{Lax flows on loop algebras}\label{sec:Laxflow}
We have constructed a hierarchy of vector fields $Y_k$ on $\mM$ which are symplectic for $-2\leq k\leq 3$ with respect to explicitly given Hamiltonians $E_{-2},\dots,E_3$, see Table~\ref{tab:hierarchy}. In order for our hierarchy $Y_k$ to qualify as an  ``infinite dimensional integrable system'', the least we need to verify is
\begin{enumerate}
\item
The vector fields $Y_k$ commute, that is $[Y_i,Y_j]_{\mM}=0$, for which Corollary~\ref{cor:precommute} provides evidence. 
\item
There is a hierarchy of Hamiltonians $E_k\in C^{\infty}(\mM,\R)$ for which $Y_k$ are symplectic. 
\end{enumerate}
To this end it will be helpful to rewrite the recursion \eqref{eq:recursion} in terms of the generating function 
\begin{equation*}\label{eq:generating}
Y=\sum_{k\geq 0}Y_k\lambda^{-k}
\end{equation*}
of the hierarchy $Y_k\in\Gamma(T\mathcal{M})$ as 
\begin{equation}\label{eq:LaxY}
Y'+\lambda T\times Y=0\,.
\end{equation}
This last is a Lax pair equation on the loop Lie algebra 
\begin{equation*}\label{eq:loopalgebra}
\Lambda\R^3=\R^3((\lambda^{-1}))
\end{equation*}
of formal Laurent series with poles at $\lambda=\infty$. The Lie algebra structure on $\Lambda\R^3$ is the pointwise structure induced from the Lie algebra $(\R^3,\times)$. 
From the construction of $Y_k$ in Theorem~\ref{thm:recursionsolution}, we see that 
\begin{equation}\label{eq:Yunit}
(Y,Y)=\sum_{k,l\geq 0}(Y_k,Y_l)\lambda^{-k-l}=1\,.
\end{equation}
One way  to address the commutativity of the flows $Y_k$ is to relate them to commuting vector fields on $\Lambda\R^3$ obtained by  standard methods, such as Adler--Kostant--Symes or, more generally, R-matrix theory.  

Without digressing too far, we summarize the gist of these constructions \cite{ferus1992}:  given a Lie algebra $\frak{g}$, a vector field $X\colon\frak{g}\to\frak{g}$ is $\ad$-invariant if 
\begin{equation}\label{eq:adinv}
dX_{\xi}[\xi,\eta]=[X(\xi),\eta]\,,\quad \xi,\eta\in\frak{g}\,.
\end{equation}
Standard examples of $\ad$-invariant vector fields are gradients (with respect to an invariant inner product)  of $\Ad$-invariant functions if $\frak{g}$ is the Lie algebra of a Lie group $G$. If $X,\tilde{X}$ are $\ad$-invariant vector fields, then \eqref{eq:adinv} implies
\[
 [\xi,X(\xi)]=0\quad\text{and}\quad[X(\xi),\tilde{X}(\xi)]=0
 \]
 for $\xi\in\frak{g}$. These are just a restatement of the fact that gradients of $\Ad$-invariant functions commute with respect to the Lie--Poisson structure on $\frak{g}$. One further ingredient is the so-called R-matrix, that is, an endomorphism $R\in\End(\frak{g})$ satisfying the Yang--Baxter equation
 \[
R( [R\xi,\eta]+[\xi,R\eta])-[R\xi,R\eta]=c[\xi,\eta]
\]
for some constant $c$. This relation is a necessary condition for $[\xi,\eta]_R= [R\xi,\eta]+[\xi,R\eta]$ to satisfy the Jacobi identity. Thus, $[\,,\,]_R$ defines a second Lie algebra structure  on $\frak{g}$. A typical example of an R-matrix (relating to the Adler--Kostant--Symes method)  arises from a decomposition 
$\frak{g}=\frak{g}_{+}\oplus\frak{g}_{-}$ into Lie subalgebras: $R=\tfrac{1}{2}(\pi_{+}-\pi_{-})$ with $\pi_{\pm}$ the projections along the decomposition. Whereas the Hamiltonian dynamics of $\Ad$-invariant functions with respect to the original Lie-Poisson structure on $\frak{g}$ is trivial, the  introduction of an R-matrix renders this dynamics non-trivial for the new Lie--Poisson structure while keeping the original commutativity.  Given an $\ad$-invariant vector field $X$ on $\frak{g}$ and a constant $\mu$, then
\[
V(\xi)=[\xi,(R+\mu \Id)X(\xi)]
\]
is the corresponding  ``Hamiltonian'' vector field for the Lie--Poisson structure corresponding to $[\,,\,]_R$.  We collect the basic integrability properties of the vector fields $V$, which are verified by elementary calculations using the properties of $\ad$-invariant vector fields.
\begin{Lem}\label{lem:fundamental}
Let $X,\tilde{X}$ be $\ad$-invariant vector fields on the Lie algebra $\frak{g}$.
\begin{enumerate}
\item
The corresponding Hamiltonian vector fields $V,\tilde{V}$ on $\frak{g}$ commute, that is, their vector field Lie bracket $[V,\tilde{V}]_{C^{\infty}}=0$.
\item
Let $\xi\colon D\to \frak{g}$ be a solution to 
\begin{equation}\label{eq:Lax}
d\xi=V(\xi)dt+\tilde{V}(\xi)d\tilde{t}
\end{equation}
over a 2-dimensional domain $D$. Then the $\frak{g}$-valued $1$-form 
\[
\beta=(R+\mu \Id)X(\xi) dt +(R+\tilde{\mu} \Id)\tilde{X}(\xi) d\tilde{t}
\]
on $D$ satisfies the Maurer-Cartan equation $d\beta+\tfrac{1}{2}[\beta\wedge\beta]=0$. 
\end{enumerate}
If $G$ is a Lie group for $\frak{g}$, the ODE
\[
dF=F\beta
\]
has a solution $F\colon D\to G$ by (ii) and $\xi=\Ad F^{-1}\eta$ solves \eqref{eq:Lax} for any initial condition $\eta\in\frak{g}$. Thus, $\Ad$-invariant functions are constant along solutions to \eqref{eq:Lax}, which is referred to as ``isospectrality''  of the flow. Specifically, if an $\Ad$-invariant inner product  on $\frak{g}$ is positive definite and the flow \eqref{eq:Lax} is contained in a finite dimensional subspace of $\frak{g}$, then the flow is complete. 
\end{Lem}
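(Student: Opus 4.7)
The plan is to show that parts (i) and (ii) both reduce to a single algebraic identity, which in turn follows from the Yang--Baxter equation together with the defining properties of $\ad$-invariant vector fields. Write $A(\xi) = (R + \mu\Id) X(\xi)$ and $\tilde{A}(\xi) = (R + \tilde{\mu}\Id) \tilde{X}(\xi)$, so that $V(\xi) = [\xi, A(\xi)]$ and $\tilde{V}(\xi) = [\xi, \tilde{A}(\xi)]$. The central identity I aim to establish is
\[
(R + \tilde{\mu}\Id)[\tilde{X}(\xi), A(\xi)] - (R + \mu\Id)[X(\xi), \tilde{A}(\xi)] + [A(\xi), \tilde{A}(\xi)] = 0.
\]
To prove this, expand all brackets using $A = RX + \mu X$ and $\tilde{A} = R\tilde{X} + \tilde{\mu}\tilde{X}$. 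Since $X$ and $\tilde{X}$ are $\ad$-invariant one has $[X(\xi), \tilde{X}(\xi)] = 0$, killing every term proportional to $[X,\tilde{X}]$; the remaining $\mu$- and $\tilde\mu$-linear pieces cancel pairwise by antisymmetry of the Lie bracket, leaving precisely $[RX, R\tilde{X}] - R\bigl([RX, \tilde{X}] + [X, R\tilde{X}]\bigr)$, which equals $-c[X, \tilde{X}] = 0$ by the Yang--Baxter equation.

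Given this identity, part (ii) follows by direct computation. The Maurer--Cartan equation for $\beta = A\,dt + \tilde{A}\,d\tilde{t}$ reduces on a two-dimensional domain to
\[
\partial_t \tilde{A} - \partial_{\tilde{t}} A + [A, \tilde{A}] = 0.
\]
Using $\partial_t \xi = V(\xi) = [\xi, A]$ and $\ad$-invariance of $\tilde{X}$, I compute $\partial_t \tilde{X}(\xi) = d\tilde{X}_\xi[\xi, A] = [\tilde{X}(\xi), A]$, hence $\partial_t \tilde{A} = (R + \tilde{\mu})[\tilde{X}, A]$; symmetrically $\partial_{\tilde{t}} A = (R + \mu)[X, \tilde{A}]$. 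Substituting exactly reproduces the central identity, so Maurer--Cartan holds.

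For part (i), I compute the vector field bracket $[V, \tilde{V}](\xi) = d\tilde{V}_\xi V(\xi) - dV_\xi \tilde{V}(\xi)$ via Leibniz: for instance $dV_\xi \eta = [\eta, A(\xi)] + [\xi, (R+\mu)dX_\xi \eta]$, and $\ad$-invariance lets me replace $dX_\xi[\xi, \tilde{A}]$ by $[X(\xi), \tilde{A}]$. The result is
\[
[[\xi, A], \tilde{A}] - [[\xi, \tilde{A}], A] + \bigl[\xi,\ (R + \tilde{\mu})[\tilde{X}, A] - (R + \mu)[X, \tilde{A}]\bigr].
\]
The Jacobi identity rewrites the first two terms as $[\xi, [A, \tilde{A}]]$, and then the central identity collapses the second slot of the outer bracket to zero, yielding $[V, \tilde{V}]_{C^{\infty}} = 0$.

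The main technical obstacle is the bookkeeping in the expansion leading to Yang--Baxter: one must track carefully which terms vanish by $\ad$-invariance ($[X,\tilde{X}]=0$), which cancel by antisymmetry, and confirm that the surviving purely $R$-linear combination matches the left-hand side of the Yang--Baxter equation exactly. The addendum about Lax form is then immediate: solvability of $dF = F\beta$ follows from (ii) by the standard Frobenius integrability theorem on Lie groups, the formula $\xi = \Ad F^{-1}\eta$ confines the flow to a single adjoint orbit, and $\Ad$-invariant functions are constant on such orbits, giving isospectrality. In the positive-definite finite-dimensional case, $\Ad$-invariance forces $|\xi|$ to be preserved, confining the flow to a compact sphere, whence completeness follows from the standard escape lemma for ODEs.
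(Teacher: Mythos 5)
Your proof is correct and follows exactly the route the paper intends: the paper itself omits the computation, remarking only that the lemma is ``verified by elementary calculations using the properties of $\ad$-invariant vector fields'' (citing Ferus--Pedit), and your reduction of both (i) and (ii) to the single identity $(R+\tilde\mu\Id)[\tilde X,A]-(R+\mu\Id)[X,\tilde A]+[A,\tilde A]=0$, verified via $[X(\xi),\tilde X(\xi)]=0$ and the Yang--Baxter equation, is precisely that elementary calculation carried out. The handling of the addendum (Frobenius integrability, confinement to an adjoint orbit, conserved norm forcing the flow onto a compact sphere) also matches the paper's intent.
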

With this at hand, the recursion equation \eqref{eq:LaxY} for the generating loop $Y=\sum_{k\geq 0}Y_k\lambda^{-k}$  can be seen as the first in a hierarchy of commuting flows on the loop algebra $\Lambda\R^3$. We have the splitting
\begin{equation*}\label{eq:splitting}
\Lambda\R^3=\Lambda^{+}\R^3\oplus\Lambda^{-}\R^3
\end{equation*}
into the Lie subalgebras $\Lambda^{+}\R^3=\R^3[\lambda]_{0}$ of polynomials without constant term and power series 
$\Lambda^{-}\R^3 = \R^3[[\lambda^{-1}]]$ in $\lambda^{-1}$ with R-matrix $R=\tfrac{1}{2}(\pi_{+}-\pi_{-})$. 
For $k\in\Z$ the vector fields 
\[
X_k(\xi)=\lambda^{k+1}\xi
\]
on $\Lambda\R^3$ are $\ad$-invariant with corresponding commuting vector fields
\[
V_k(\xi)=\xi \times (R+\tfrac{1}{2}\Id)X_k(\xi) =\xi\times (\lambda^{k+1}\xi)_{+}\,
\] 
as shown in  Lemma~\ref{lem:fundamental}. For $\xi\in\Lambda^{-}\R^3$ we have
\[
\xi \times V_{k}(\xi)=\xi\times (\lambda^{k+1}\xi)_{+}=- \xi\times (\lambda^{k+1}\xi)_{-}\,,
\]
so that the Lie subalgebra $\Lambda^{-}\R^3$ is preserved under the flows $V_k$.
In particular, the recursion equation \eqref{eq:LaxY} can be rewritten as
\begin{equation}\label{eq:Lax0}
Y'+(\lambda Y)_{+} \times Y=0\,,
\end{equation}
which is the lowest flow $V_0$ of the hierarchy $V_k$ restricted to $\Lambda^{-}\R^3$. The Lie theoretic flows $V_k$ are related to the geometric flows $Y_k$ as follows:
\begin{The}\label{thm:Y2V}
If $\gamma\in\mM$ evolves by $Y_k$, that is, $\dot{\gamma}=Y_k$, then the generating loop $Y$ evolves by $V_k$, that is, $\dot{Y}=V_{k}$. 
\end{The}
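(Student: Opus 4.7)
The plan is to establish that $\dot Y$ and $V_k(Y)$ satisfy the same inhomogeneous Lax equation, the same orthogonality condition with $Y$, and have the same $\lambda^0$--coefficient, and then to use the uniqueness built into the recursion together with the normalization $(Y,Y)=1$ to conclude the two must agree term by term.

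First I would differentiate the Lax equation $Y'+\lambda T\times Y=0$ with respect to $t$. Because the flow $\dot\gamma=Y_k$ preserves arclength we have $\dot T=Y_k'$, which by the recursion \eqref{eq:recursion} equals $-T\times Y_{k+1}$. Substituting yields the inhomogeneous equation
\[
\dot Y'+\lambda T\times\dot Y \;=\; \lambda\, Y\times Y_k'\,.
\]
Next I would derive the analogous equation for $V_k(Y)=Y\times A_+$ with $A_+=(\lambda^{k+1}Y)_+$. Using $Y'=-\lambda T\times Y$ and the fact that the cross product with $T$ is $\lambda$-independent and therefore commutes with the splitting $(\,\cdot\,)_\pm$, one computes $A_+'=-T\times B_+$ where $B_+=(\lambda^{k+2}Y)_+$; a direct inspection of the positive-part projection gives $B_+=\lambda A_++\lambda Y_{k+1}$. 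Expanding the derivative of $Y\times A_+$, adding $\lambda T\times V_k(Y)$, and applying the Jacobi identity for the cross product to combine three of the four resulting terms as $\lambda Y\times(T\times A_+)$, one arrives at
\[
(V_k(Y))'+\lambda T\times V_k(Y) \;=\; Y\times\bigl(T\times(\lambda A_+-B_+)\bigr) \;=\; \lambda\, Y\times Y_k'\,,
\]
matching the equation for $\dot Y$.

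Consequently the difference $W:=\dot Y-V_k(Y)$ satisfies the homogeneous Lax equation $W'+\lambda T\times W=0$; it lies in $\Lambda^{-}\R^3$ because $V_k(Y)=-Y\times A_-$ (using $Y\times Y=0$) manifestly does; and it is perpendicular to $Y$, since $(Y,Y)=1$ forces $(\dot Y,Y)=0$ while $(Y\times A_+,Y)=0$ trivially. The leading coefficient vanishes as well: by antisymmetry $\sum_{i=0}^{k+1}Y_i\times Y_{k+1-i}=0$, so $(V_k(Y))_0=\sum_{i=1}^{k+1}Y_i\times Y_{k+1-i}=-Y_0\times Y_{k+1}=-T\times Y_{k+1}=Y_k'$, which also equals $\dot Y_0=\dot T$; hence $W_0=0$.

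Finally I would deduce $W=0$ coefficient by coefficient. Expanding $W'+\lambda T\times W=0$ yields the recursion $T\times W_{n+1}=-W_n'$, while the $\lambda^{-n}$ coefficient of $(W,Y)=0$ reads $\sum_{i+j=n}(W_i,Y_j)=0$. Inductively, assuming $W_0=\cdots=W_{n-1}=0$, the recursion forces $T\times W_n=0$, hence $W_n=h_n T$ for some scalar $h_n$; the orthogonality relation then collapses to $(W_n,Y_0)=h_n=0$. The main obstacle is the Jacobi-identity computation in the second paragraph: one must carefully track the shift $B_+-\lambda A_+=\lambda Y_{k+1}$ coming from projecting onto $\Lambda^{+}\R^3$, since it is precisely this term that supplies the inhomogeneity $\lambda Y\times Y_k'$. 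Once that identity is in hand, the uniqueness argument using $(Y,Y)=1$ and the matching leading coefficient is essentially automatic.
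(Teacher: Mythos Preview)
Your proof is correct and follows essentially the same approach as the paper. Both derive the inhomogeneous Lax equation $\dot Y'+\lambda T\times\dot Y=\lambda\, Y\times Y_k'$ for $\dot Y$ and show that $V_k(Y)=Y\times(\lambda^{k+1}Y)_+$ is a particular solution. The paper discovers this solution via a variation-of-constants Ansatz $\dot Y=Y\times Z$ and then disposes of the homogeneous remainder in one line (``general solution is $\dots+cY$, and $(Y,Y)=1$ forces $c=0$''), whereas you verify directly that $V_k(Y)$ satisfies the inhomogeneous equation and then run an explicit coefficient-by-coefficient induction using $W_0=0$ together with $(W,Y)=0$. Your uniqueness step is really the content behind the paper's terse ``general solution'' claim, so your argument is if anything more complete; otherwise the two proofs coincide.
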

\begin{proof}
We  derive an inhomogeneous ODE for $\dot{Y}$ which then can be solved by variation of the constants. The vector fields $V_0$ and $V_k$ commute, thus
\[
\dot{Y}'=(Y')\udot= \dot{Y}\times \lambda Y_0+Y\times\lambda \dot{Y_0}
\]
due to \eqref{eq:Lax0} and $Y_0= (\lambda Y)_{+}$. From the explicit form of $Y_k$ given in Theorem~\ref{thm:recursionsolution} it follows that the the reparametrization flow $Y_0$ commutes with all $Y_k$. Therefore,
\[
\dot{Y_0}=\dot{T}=(\gamma')\udot=\dot{\gamma}'=Y_k'=Y_{k+1}\times Y_0
\]
and we obtain the inhomogeneous ODE
\[
\dot{Y}'+\lambda Y_0\times \dot{Y}= (\lambda Y_0\times Y_{k+1})\times Y\,.
\]
Since $Y$ is a solution to the homogeneous equation, we make the variation of the constants Ansatz 
\[
\dot{Y}=[Y,Z]\,.
\]
Inserting this into the inhomogeneous ODE and using Jacobi identity yields
\[
(Z'+\lambda Y_0\times Z-\lambda Y_0\times Y_{k+1})\times Y=0\,,
\]
which is easily seen to be solved by $Z=(\lambda^{k+1}Y)_{+}$.  Thus,  the general solution of the inhomogeneous ODE is 
 $\dot{Y}=[Y,(\lambda^{k+1}Y)_{+}]+c Y$ for some $t$-independent $c$. But  $(Y,Y)=1$ by \eqref{eq:Yunit}, hence $c=0$ and $\dot{Y}= V_k$  as stated.
\end{proof}
We are now in a position to show the commutativity of the geometric flows $Y_k$.
\begin{The}
The flows $Y_k$ commute on $\mM$.
\end{The}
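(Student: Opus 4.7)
The plan is to reduce commutativity of the $Y_k$ on $\mathcal{M}$ to a short algebraic identity on coefficients of the generating loop $Y$, using Theorem~\ref{thm:Y2V} as the translation dictionary. At a curve $\gamma\in\mathcal{M}$, working in the global identification of $T_\gamma\mathcal{M}$ with $\R^3$-valued maps along $\gamma$, the Lie bracket of two vector fields reads
\[
[Y_k,Y_j]_{\mathcal{M}}(\gamma)=dY_j\big|_\gamma(Y_k(\gamma))-dY_k\big|_\gamma(Y_j(\gamma)),
\]
so it suffices to show $dY_j(Y_k)=dY_k(Y_j)$ for all $j,k\geq 0$. By Theorem~\ref{thm:Y2V}, along $\dot\gamma=Y_k$ the generating loop evolves by $\dot Y=V_k(Y)=Y\times(\lambda^{k+1}Y)_+$. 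Extracting the $\lambda^{-j}$ coefficient of both sides, and using that each $Y_j$ is a differential polynomial in $\gamma$ alone, I obtain $dY_j|_\gamma(Y_k(\gamma))=[V_k(Y)]_j$, where $[\,\cdot\,]_j$ denotes the coefficient of $\lambda^{-j}$. The Lie bracket therefore equals $[V_k(Y)]_j-[V_j(Y)]_k$, and the task reduces to showing this difference vanishes.

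Unpacking $(\lambda^{k+1}Y)_+=\sum_{a=0}^{k}Y_a\lambda^{k+1-a}$ yields the explicit formulas
\[
[V_k(Y)]_j=\sum_{a=0}^{k}Y_{N-a}\times Y_a,\qquad [V_j(Y)]_k=\sum_{a=0}^{j}Y_{N-a}\times Y_a,\qquad N:=k+j+1.
\]
Assuming $j\leq k$ without loss of generality, the difference collapses to $\sum_{a=j+1}^{k}Y_{N-a}\times Y_a$, and I would finish by observing that the substitution $a\mapsto N-a$ is a self-bijection of the range $[j+1,k]$ which converts the summand $Y_{N-a}\times Y_a$ into $Y_a\times Y_{N-a}=-Y_{N-a}\times Y_a$. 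The sum equals its own negative and vanishes.

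The main obstacle sits at the interface of analysis and algebra: one needs to be comfortable treating the formal power series $Y$ and its coefficients $Y_j$ as honest smooth functions on the Fr\'echet manifold $\mathcal{M}$, so that the coefficient identity $\dot Y_j=[V_k(Y)]_j$ extracted from Theorem~\ref{thm:Y2V} is literally the Fr\'echet derivative $dY_j(Y_k)$. Once that is accepted, the rest is the cancellation above. It is worth noting that this approach does not explicitly invoke the commutativity of the Lax flows $V_k$ on $\Lambda\R^3$ from Lemma~\ref{lem:fundamental}(i); only the Lax-pair form $\dot Y=V_k(Y)$ together with antisymmetry of the cross product are needed, though the two facts are of course closely related.
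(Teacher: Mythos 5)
Your proposal is correct and follows essentially the same route as the paper: use Theorem~\ref{thm:Y2V} to identify the derivative of $Y_j$ along the flow of $Y_k$ with the $\lambda^{-j}$ coefficient of $V_k(Y)=Y\times(\lambda^{k+1}Y)_+$, and then check the resulting coefficient identity by skew symmetry of the cross product. Your explicit reindexing $a\mapsto N-a$ just spells out the cancellation the paper leaves as an assertion.
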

\begin{proof}
Considering the flows $\dot{\gamma}=Y_k$ and $\mathring{\gamma}=Y_l$, we need to verify that 
\[
\mathring{Y_k}=\dot{Y_l}\,.
\]
The previous theorem implies that the generating loop $Y=\sum_{k\geq 0}Y_k\lambda^{-k}$ satisfies 
\[
\dot{Y}=V_k\quad\text{and}\quad \mathring{Y}=V_l\,.
\]
Thus it suffices to show that the $\lambda^{-l}$ coefficient of $V_k=Y\times (\lambda^{k+1}Y)_{+}$ is equal to the $\lambda^{-k}$ coefficient of $V_l=Y\times (\lambda^{l+1}Y)_{+} $.  Unravelling this condition we need to check the equality
\[
\sum_{\substack{i+j=k+l+1\\ 0\leq j\leq k}} Y_i\times Y_j=\sum_{\substack{i+j=k+l+1\\ 0\leq j\leq l}} Y_i\times Y_j
\]
which indeed holds due to the skew symmetry of the vector cross product.
\end{proof}
 At this stage it is worth noting that we have provided a dictionary translating the geometrically defined flows $Y_k$ on $\mathcal{M}$ into the purely Lie theoretic defined flows $V_k$ on $\Lambda\R^3$. 
There is a rich solution theory of such equations and a particularly well studied class of solutions are the ``finite gap solutions'', which arise from finite dimensional algebro-geometric integrable systems.  

In much of what follows, we will need to complexify the loop Lie algebra
$\Lambda\R^3$ to $\Lambda\C^3$ allowing for complex values of the loop parameter $\lambda$. Depending on the context, the Lie algebra $\R^3={\bf su}_2$ will be identified with the Lie algebra of the special unitary group ${\bf SU}_2$. Likewise, $\C^3={\bf sl}_2(\C)$ will be identified with the Lie algebra of the special linear group ${\bf SL}_2(\C)$. The flows $V_k$ can be extended to $\Lambda\C^3$ and satisfy the reality condition $V_{k}(\bar{\xi})=\overline{V_{k}(\xi)}$. In particular, the vector fields $V_k$ preserve $\Lambda\R^3\subset \Lambda\C^3$. 
\begin{Lem}
The subspaces 
 \[
 \Lambda_d=\{\xi\in\Lambda^{-}\C^3\,;\, \xi=\sum_{k=0}^{d} \xi_k\lambda^{-k}\}\subset \Lambda\C^3
 \]
 of Laurent polynomials of degree $d\in\N$ are invariant under the commuting flows $V_k$ acting non-trivially only for $0\leq k\leq d-1$ on $\Lambda_d$.  In particular, for real initial data $\eta\in\Lambda_{d}^{\R}$ the flow
 \begin{equation}\label{eq:finitegapflow}
 d\xi=\sum_{k=0}^{d-1}V_k(\xi)dt_{k}
 \end{equation}
 has a real global solution $\xi\colon \R^{d}\to\Lambda_d^{\R}$ with $(\xi,\xi)=(\eta,\eta)$.
\end{Lem}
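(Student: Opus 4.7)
My plan is to verify three things in sequence: (a) invariance of $\Lambda_d$ under each $V_k$ together with vanishing for $k\geq d$; (b) reality and conservation of the pairing $(\xi,\xi)$ along each flow; (c) global existence of the multi-time flow by proving compactness of the relevant level set -- this last step is the main obstacle.

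For (a), I work directly with $V_k(\xi)=\xi\times(\lambda^{k+1}\xi)_+$. For $\xi=\sum_{i=0}^d\xi_i\lambda^{-i}\in\Lambda_d$ and $k\geq d$, every term of $\lambda^{k+1}\xi$ is a strictly positive power of $\lambda$, so $(\lambda^{k+1}\xi)_+=\lambda^{k+1}\xi$ and hence $V_k(\xi)=\lambda^{k+1}(\xi\times\xi)=0$. For $0\leq k\leq d-1$ one has $(\lambda^{k+1}\xi)_+=\sum_{j=0}^{k}\xi_j\lambda^{k+1-j}$, and the summands of $\xi\times(\lambda^{k+1}\xi)_+$ range in $\lambda$-degree between $k+1$ and $1-d$. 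The identity $\xi\times(\lambda^{k+1}\xi)=0$ forces the strictly positive powers to cancel, leaving $V_k(\xi)\in\Lambda^-\C^3$ of lowest $\lambda$-degree at least $1-d$; hence $V_k(\xi)\in\Lambda_{d-1}\subset\Lambda_d$.

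For (b), reality follows from the stated identity $V_k(\bar\xi)=\overline{V_k(\xi)}$, which preserves $\Lambda_d^{\R}=\Lambda_d\cap\Lambda\R^3$. Along each individual flow I compute
\[
\tfrac{d}{dt_k}(\xi,\xi)=2(V_k(\xi),\xi)=2(\xi\times(\lambda^{k+1}\xi)_+,\xi).
\]
It suffices to check the general identity $(\xi\times\eta,\xi)=0$ in $\C((\lambda^{-1}))$ for every $\eta\in\Lambda\C^3$: the coefficient of $\lambda^{-n}$ equals $\sum_{i+j+\ell=n}\det(\xi_i,\eta_j,\xi_\ell)$, which vanishes by antisymmetry of $\det$ in its first and third arguments.

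For (c), commutativity of the $V_k$ from Lemma~\ref{lem:fundamental} makes the multi-time flow $d\xi=\sum_{k=0}^{d-1}V_k(\xi)\,dt_k$ a well-defined candidate wherever it exists, and the $V_k$ are polynomial vector fields on the finite-dimensional real space $\Lambda_d^{\R}\cong(\R^3)^{d+1}$. The main obstacle is completeness: I would argue that the level set $Z_\eta=\{\xi\in\Lambda_d^{\R}:(\xi,\xi)=(\eta,\eta)\}$ is compact, so that flows on it are complete. Note that the individual coefficients of $(\xi,\xi)\in\R[\lambda^{-1}]$ (apart from $|\xi_0|^2$ and $|\xi_d|^2$) are not sign-definite, so a coefficient-by-coefficient bound does not work. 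Instead, I fix $d+1$ distinct nonzero real numbers $\lambda_0,\dots,\lambda_d$; for $\xi\in Z_\eta$ each evaluation $\xi(\lambda_i)\in\R^3$ has fixed squared norm $(\xi,\xi)(\lambda_i)$ determined by $\eta$, and hence lies in a bounded set. A Vandermonde linear system in $\lambda_0^{-j},\dots,\lambda_d^{-j}$ then recovers $(\xi_0,\dots,\xi_d)$ from $(\xi(\lambda_0),\dots,\xi(\lambda_d))$, so each $|\xi_j|$ is uniformly bounded on $Z_\eta$. Being closed and bounded in the finite-dimensional space $\Lambda_d^{\R}$, $Z_\eta$ is compact, and the joint $\R^d$-flow on $Z_\eta$ exists for all time.
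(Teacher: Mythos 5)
Your proposal is correct, and parts (a) and (b) follow the paper's route: the paper's entire invariance argument is the two-sided identity $V_k(\xi)=\xi\times(\lambda^{k+1}\xi)_{+}=-\xi\times(\lambda^{k+1}\xi)_{-}$, which is exactly what you exploit (you additionally spell out the vanishing of $V_k$ for $k\geq d$ and the coefficient-wise conservation of $(\xi,\xi)$, which the paper leaves implicit or delegates to Lemma~\ref{lem:fundamental}). Where you genuinely diverge is completeness. The paper argues that the $\Ad$-invariant inner product on $\Lambda\R^3$ restricts to a \emph{positive definite} inner product on the finite-dimensional space $\Lambda_d^{\R}$ which the flows preserve, so the trajectory stays on a compact sphere; making this precise requires exhibiting such an invariant positive form on the loop algebra (e.g.\ by integrating $(\xi(\lambda),\xi(\lambda))$ over an interval of real $\lambda$ away from $0$, where the frame $F_\lambda$ is unitary). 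You instead prove compactness of the full isospectral set $\{(\xi,\xi)=(\eta,\eta)\}$ directly: evaluation at $d+1$ distinct nonzero real points pins each $|\xi(\lambda_i)|$, and the Vandermonde system in $\lambda_i^{-j}$ converts these pointwise bounds into bounds on the coefficients $\xi_j$. The two arguments are close in spirit --- both ultimately rest on conservation of $|\xi(\lambda)|$ for real $\lambda$ --- but yours is more elementary and self-contained, and it correctly flags (and circumvents) the fact that the individual coefficients of $(\xi,\xi)$ are not sign-definite, so naive coefficient-by-coefficient bounds fail. The remaining ingredients (commutativity of the $V_k$ from Lemma~\ref{lem:fundamental} to assemble the joint $\R^d$-flow, the reality condition $V_k(\bar\xi)=\overline{V_k(\xi)}$ to stay in $\Lambda_d^{\R}$) are used the same way in both proofs.
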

\begin{proof}
Since
\[
V_k=\xi\times (\lambda^{k+1}\xi)_{+}=-\xi\times (\lambda^{k+1}\xi)_{-}
\]
we see that $V_k$ are tangent to $\Lambda_d$. The invariant  inner product on $\Lambda\R^3$ restricts to a positive inner product on the finite dimensional space $\Lambda_d^{\R}$ which  $V_k$ preserves by Lemma~ \ref{lem:fundamental}. In particular, the flows evolve on a compact finite dimensional sphere and thus are complete.
\end{proof}
A fundamental invariant of the dynamics of $V_k$ is the hyperelliptic {\em spectral curve}
\begin{equation*}\label{eq:spectralcurve}
 \Sigma=\{(\lambda,\mu)\in\C^{\times}\times\C\,;\, \mu^2+(\xi,\xi)=0\}
 \end{equation*}
which only depends on the initial condition $\eta\in\Lambda_{d}^{\R}$ of a solution $\xi\colon\R^d\to \Lambda_d^{\R}$ of the flow \eqref{eq:finitegapflow}.  The spectral curve $\Sigma$ completes to a genus $d-1$ curve equipped with a real structure covering complex conjugation in $\lambda$.  Since 
$(\xi,\xi)=\Tr(\xi^2)$, the spectral curve encodes the constant eigenvalues of the ``isospectral'' solution 
$\xi$. Alternatively, one could consider for each $t\in\R^d$ the eigenline curve
\begin{equation}\label{eq:eigenlinecurve}
\Sigma_t=\{(\lambda,\C v)\,;\, \xi_{\lambda}(t) v=\mu_{\lambda} v\} \subset \C\times \CP^1\,,
\end{equation}
which can be shown to be biholomorphic to $\Sigma$. Pulling pack the canonical bundle over $\CP^1$ under $\Sigma_t\subset \C\times \CP^1$
gives a family of holomorphic line bundles $L_{t}\to\Sigma$.  The dynamics of the ODE \eqref{eq:finitegapflow} is encoded in the eigenline bundle flow $L\colon\R^d\to \Jac(\Sigma)$ of $\xi$ in the Jacobian $\Jac(\Sigma)$ of $\Sigma$.  The flow of line bundles $L_t$ is known to be linear and thus the vector fields  $V_k$ linearize on $\Jac(\Sigma)$.  This exhibits the finite gap theory as a classical integrable system with explicit formulas for solutions in terms of theta functions on the spectral curve $\Sigma$. 

Returning to our geometric picture, finite gap solutions have a purely variational description when viewed on the space of curves with fixed monodromy $\mM$.
Let $E_k\in\C^{\infty}(\mM,\R)$ be the putative Hamiltonians for the vector fields $Y_k$.
\begin{The}\label{thm:finitegap}
The following statements on a curve $\gamma\in\mM$  are equivalent:
\begin{enumerate}
\item
$\gamma$ is an  $E_{d+1}$ critical curve constrained by $E_1,\dots, E_d$.
\item
The lowest order flow
\[
\xi'=V_0(\xi)
\]
admits a solution $\xi=\sum_{k=0}^{d}\xi_k\lambda^{-k}$ on $\Lambda_d^{\R}$ with $(\xi,\xi)=1$. 
\end{enumerate}
The relation between the curve $\gamma$ and the solution $\xi$ is given by $\gamma'=\xi_0$. 
\end{The}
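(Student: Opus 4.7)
\textit{Approach.} Both directions hinge on viewing the generating loop $Y=\sum_{k\ge 0}Y_k\lambda^{-k}$ as a formal solution of the Lax equation $Y'+\lambda T\times Y=0$, which is precisely $\xi'=V_0(\xi)$ with $\xi_0=T$. The condition (i) will turn out to be exactly what allows this formal series to admit a polynomial ``truncation'' in $\Lambda_d^{\R}$, with the leading coefficient still equal to $T=\gamma'$.

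\textit{Proof of (i) $\Rightarrow$ (ii).} The equivalent form of (i) from the preceding theorem reads $Y_d=\sum_{i=0}^{d-1}c_iY_i+v$ for Lagrange multipliers $c_i\in\R$ and a constant vector $v\in\R^3$ along the monodromy axis. Introduce the polynomial factor $C(\lambda)=1-c_{d-1}\lambda^{-1}-\cdots-c_0\lambda^{-d}$ and define $\xi\in\Lambda_d^{\R}$ by truncating $C(\lambda)Y$ to degree $d$ in $\lambda^{-1}$; explicitly $\xi_k=Y_k-\sum_{j=1}^{k}c_{d-j}Y_{k-j}$ for $k=0,\dots,d$. Then $\xi_0=T$ inherits the unit-length normalization, and the constraint gives $\xi_d=v$, a constant vector. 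Using $Y_k'=-T\times Y_{k+1}$ together with $T\times T=0$, a brief computation verifies $\xi_k'+T\times\xi_{k+1}=0$ for $0\le k\le d-1$ and $\xi_d'=0$; expanding $V_0(\xi)=\xi\times\lambda\xi_0$ in powers of $\lambda$ then shows $\xi'=V_0(\xi)$ on $\Lambda_d^{\R}$.

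\textit{Proof of (ii) $\Rightarrow$ (i).} Conversely, from a polynomial solution $\xi\in\Lambda_d^{\R}$ of $\xi'=V_0(\xi)=-\lambda\xi_0\times\xi$ with $(\xi_0,\xi_0)=1$, set $T=\xi_0$ and let $\gamma$ be any curve with $\gamma'=T$. Collecting coefficients of $\lambda^{-k}$ in the Lax equation produces the recursion $\xi_k'+T\times\xi_{k+1}=0$ for $0\le k\le d-1$, together with the termination $\xi_d'=0$; hence $\xi_d=v\in\R^3$ is constant. Since the generating loop $Y=\sum_{k\ge 0}Y_k\lambda^{-k}$ of $\gamma$ satisfies the same recursion starting from the same datum $Y_0=T$, an induction on $k$ expresses each $\xi_k$ as $Y_k$ plus a linear combination of $Y_{k-1},\dots,Y_0$ with constant coefficients. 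Writing the terminal identity $\xi_d=v$ in terms of the $Y_i$'s then yields a relation $Y_d=\sum_{i=0}^{d-1}c_iY_i+v$, which by the preceding theorem is exactly (i).

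\textit{Main obstacle.} The subtle part is the tangential bookkeeping. The recursion $\xi_k'+T\times\xi_{k+1}=0$ controls only the normal components of the $\xi_{k+1}$, leaving the tangential parts as integration constants subject to the consistency $\xi_k'\perp T$. Reconciling these tangential terms with the ones prescribed for $Y_k$ by the explicit formula $f_k=-\tfrac{1}{2}\sum_{i=1}^{k-1}(Y_{k-i},Y_i)$ from Theorem~\ref{thm:recursionsolution}, and tracking how they propagate into the termination $\xi_d=v$, is precisely the point at which the Lagrange multipliers $c_i$ are extracted. Once handled, the leading normalization $(\xi_0,\xi_0)=1$ matches the leading-order piece of $(Y,Y)=1$, and the correspondence between polynomial solutions of $V_0$ on $\Lambda_d^{\R}$ and constrained-critical curves becomes transparent.
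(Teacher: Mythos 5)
Your proof is correct and takes essentially the same route as the paper's: your truncation $\xi_k=Y_k-\sum_{j=1}^{k}c_{d-j}Y_{k-j}$ of $C(\lambda)Y$ is precisely the paper's substitution \eqref{eq:xi2Y}, and the converse is the same inductive comparison of the polynomial solution $\xi$ with the generating loop $Y$, reading off the criticality relation from $\xi_{d+1}=0$. Your closing remark on the tangential integration constants correctly isolates the one step the paper also leaves implicit.
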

\begin{proof}
From \eqref{eq:criticalY}  we know that $\gamma$ is $E_{d+1}$ constrained critical if and only if 
\[
Y_{d+1}=\sum_{k=0}^{d}c_kY_k
\]
for constants $c_i\in\R$.  Setting 
\begin{equation}\label{eq:xi2Y}
\xi_k=Y_k-c_d Y_{k-1}-\cdots - c_{d-k+1}Y_0
\end{equation}
for $k=0,\dots,d$ provides a Laurent polynomial solution to $\xi'=V_0(\xi)$ due to the recursion relation
$Y_k'+Y_0\times Y_{k+1}=0$ for $Y_k$ in \eqref{eq:prerecursion}. From \eqref{eq:Yunit} we know that $(Y,Y)=1$ and we also have  $\xi_0=Y_0=T=\gamma'$.  To show the converse, we start from a Laurent  polynomial solution $\xi=\sum_{k=0}^{d}\xi_k\lambda^{-k}$ and verify the relations \eqref{eq:xi2Y} inductively starting from $Y_0=\xi_0$. Since $\xi_{d+1}=0$, this gives the desired constrained criticality relation on $Y_{d+1}=\sum_{k=0}^{d}c_kY_k$. 
\end{proof}
Theorem~\ref{thm:finitegap} provides a fairly explicit recipe 
\cite{calini2000,calini2004}, \cite{matsutani2016},
for the construction of all $E_{d+1}$ critical  constrained curves $\gamma\in\mM$  with a given monodromy, together with their isospectral deformations by the higher order flows $Y_k$ for $k=1,\dots, d-1$. Since the  Lax flows  $d\xi=\sum_{k=0}^{d-1}V_k(\xi)dt_{k}$ are linear flows on the Jacobian of the spectral curve $\Sigma$, the solution $\xi$ can in principle be expressed in terms of theta functions on $\Sigma$. The curve $\gamma$ is then obtained from $\xi_0=\gamma'$ with $t_0=x$ and the ``higher times'' $t_k$ accounting for the isospectral deformations. Periodicity conditions, for instance, the required monodromy of the resulting curve $\gamma$, can be discussed via the Abel--Jacobi map. A classical example is given by Euler elastica, that is, curves $\gamma\in\mM$ with a given monodromy critical for the bending energy $E_3$ under length and torsion constraints: from our discussion we know that they can  be computed explicitly from elliptic spectral curves. 
 
\section{Associated family of curves}
The remaining question to address is why the commuting hierarchy $Y_k$ of vector fields on $\mM$ are symplectic for Hamiltonians $E_k\colon \mM\to\R$.  This requires a closer look at the geometry behind the generating loop $Y=\sum_{k\geq 0}Y_k\lambda^{-k}$ where  $Y_0=T=\gamma'$.  Lemma~\ref{lem:fundamental} implies that the generating loop $Y$, which satisfies the  Lax equation
\[
Y'+\lambda T \times Y=0\,,
\]
is given by 
\begin{equation*}\label{eq:adjointY}
Y=F^{-1}Y(x_0)F
\end{equation*}
where $F\colon M\times\C\to{\bf SL}_2(\C)$ solves the linear equation
\begin{equation}\label{eq:Fode}
 F'=F \lambda T\,,\,\,\, F(x_0)=\one\,.
 \end{equation}
 Here $x_0\in M$ denotes an arbitrarily chosen base point. Even though $Y$ is only a formal loop, $F$ is smooth on $M$, holomorphic in $\lambda\in\C$, and takes values in ${\bf SU}_2$ for real values $\lambda\in\R$.  In other words, we can think of  $F\colon M\to \Lambda^{+}{\bf SL}_2(\C)$ as a map into the loop group of holomorphic loops into ${\bf SL}_2(\C)$ equipped with the reality condition
 $
 F_{\bar{\lambda}}=({F_{\lambda}}^{*}) ^{-1}
 $.
 Note that \eqref{eq:Fode} implies that $F_{0}=\one$ identically on $M$, which is to say that $F$ maps into the based (at $\lambda=0$) loop group.  
 \begin{Def}\label{def:associatedfamily}
 Let $\gamma\in\mM$ be a curve with monodromy. The {\em associated family} of curves $\gamma_{\lambda}\colon M\to \R^3$ is given by 
 \[
 \gamma_{\lambda}'=F_{\lambda}TF_{\lambda}^{-1} \,,\,\,\, \gamma_{\lambda}(x_0)=\gamma(x_0)
 \]
 for real $\lambda\in\R$. 
 \end{Def}

\begin{figure}[ht]
  \centering
  \includegraphics[width=.6\columnwidth]{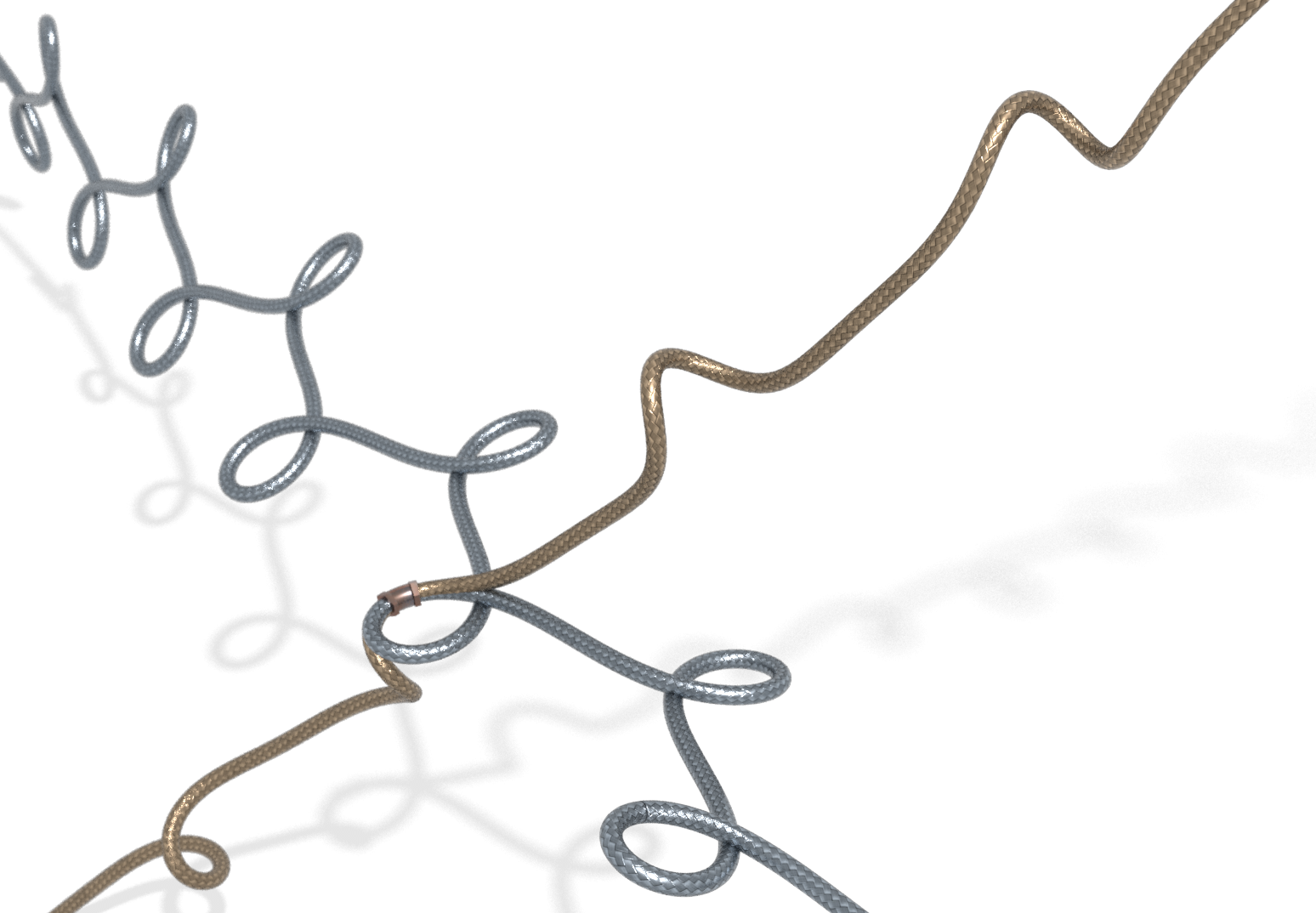}
  \caption{A curve $\gamma$ (blue) together with a member $\gamma_{\lambda}$ (gold) of its associated family for real $\lambda$. The two curves osculate to second order at $\gamma(x_0)$. 
  \label{fig:AssociateFamily}}
\end{figure}

Since $F_{\lambda=0}=\one$, the associated family $\gamma_{\lambda}$  is a deformation
of the initial curve $\gamma=\gamma_{0}$ osculating to first order at $\gamma(x_0)$. In fact, $\gamma_{\lambda}$ osculates to second order as can be seen by calculating the complex curvatures $\psi_{\lambda}\colon M\to\C$ of the curves $\gamma_{\lambda}$, which agree at $x_0\in M$ as shown in Figure~\ref{fig:AssociateFamily}. The complex curvature $\psi\colon M\to\C$ of a curve $\gamma\in\mM$ is given by $\gamma''=\psi\xi$ for a unit length parallel section $\xi\in\Gamma(\perp_{\gamma}\!\!M)$ of the normal bundle, a unitary complex line bundle. 
It has been shown \cite{hasimoto1972} that the complex curvature evolves under the non-linear Schr\"odinger equation, if the corresponding curve evolves under the vortex filament flow.

In general, the curves $\gamma_{\lambda}$ have changing monodromies and thus are not contained in the original $\mM$. 
As we shall see, the dependency of these monodromies on the spectral parameter $\lambda$ gives a geometric realization of the spectral curve and, at the same time, provides the Hamiltonians $E_k$ for the flows $Y_k$.

\begin{Lem}\label{lem:associatedfamily}
Let $\gamma\in\mM$ be a curve with monodromy $\tau^*\gamma= A \gamma A^{-1}+a$ where $A\in{\bf SU}_2$ and $a\in\R^3$.
Then we have:
\begin{enumerate}
\item
The associated family $\gamma_{\lambda}$ can be expressed via the Sym formula
\[
\gamma_{\lambda}-\gamma(x_0)=\frac{dF_{\lambda}}{d\lambda}F_{\lambda}^{-1}\,.
\]
\item
The rotation monodromy $\tilde{A}_{\lambda}$  of $\gamma_{\lambda}$ is given by
\[
\tilde{A}=(\tau^*F)(x_0)A\in\Lambda^{+}{\bf SU}_2
\]
and due to $F_{0}=\one$, we have $\tilde{A}_0=A$. The translation monodromy of $\gamma_{\lambda}$ becomes
\[
a_{\lambda}=\frac{d\tilde{A}_{\lambda}}{d\lambda}\tilde{A}_{\lambda}^{-1} - \tilde{A}_{\lambda} \,\gamma(x_0)\tilde{A}_{\lambda}^{-1}\,.
\]
\item
The total torsion $E_2$ of $\gamma_{\lambda}$ is given by 
\[
E_2(\gamma_{\lambda})=E_2(\gamma)+\lambda E_1(\gamma)
\]
with $E_1(\gamma)$ the length of $\gamma$ over a fundamental domain $I\subset M$. 
\end{enumerate}
\end{Lem}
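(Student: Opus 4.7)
For (i), I would analyze the defining ODE $F' = F\lambda T$, $F(x_0) = \one$ by differentiating in $\lambda$. Writing $\dot F := \partial_\lambda F$, one obtains $\dot F' = \dot F\lambda T + FT$. Setting $W := \dot F F^{-1}$ and using $(F^{-1})' = -\lambda T F^{-1}$, the cross terms in the product rule cancel, leaving $W' = FTF^{-1} = \gamma_\lambda'$. Since $F(x_0,\lambda) = \one$ for every $\lambda$ we have $W(x_0) = 0$, while $\gamma_\lambda(x_0) = \gamma(x_0)$ by definition; hence $W$ and $\gamma_\lambda - \gamma(x_0)$ solve the same Cauchy problem and coincide, which is the Sym formula.

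For (ii), I would determine the monodromy of $F$ by pulling the ODE back under $\tau$. Using $\tau^*T = ATA^{-1}$, a direct check shows that $A^{-1}(\tau^*F)A$ satisfies the original ODE $G' = G\lambda T$, so $A^{-1}(\tau^*F)A \cdot F^{-1}$ is $x$-independent and equals its value $A^{-1}(\tau^*F)(x_0)A$ at $x_0$. Rearranging gives
\[
\tau^*F = (\tau^*F)(x_0)\cdot A F A^{-1}.
\]
Conjugating $\tau^*T = ATA^{-1}$ by $\tau^*F$, the inner factors of $A^{\pm 1}$ convert $ATA^{-1}$ back into $T$, leaving
\[
\tau^*\gamma_\lambda' = \tilde A_\lambda\, \gamma_\lambda'\, \tilde A_\lambda^{-1},\qquad \tilde A_\lambda := (\tau^*F)(x_0)\cdot A,
\]
which is the claimed rotation monodromy; the normalization $\tilde A_0 = A$ is immediate from $F_0 \equiv \one$. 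The translation monodromy is then recovered by evaluating $\tau^*\gamma_\lambda - \tilde A_\lambda\gamma_\lambda\tilde A_\lambda^{-1}$ at $x_0$ and using (i) at $\tau(x_0)$, together with the identity $\partial_\lambda F(\tau(x_0))\cdot F(\tau(x_0))^{-1} = \partial_\lambda\tilde A_\lambda\cdot\tilde A_\lambda^{-1}$ (since $A$ is $\lambda$-independent), which produces the stated formula for $a_\lambda$.

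For (iii), I would use Lemma~\ref{lem:torsion} to express $E_2(\gamma_\lambda)$ as $\int_I(\nabla\nu_\lambda, T_\lambda\times\nu_\lambda)$ for any unit length normal $\nu_\lambda$ along $\gamma_\lambda$. Pick a unit normal $\nu$ along $\gamma$ on a fundamental domain $I$ and set $\nu_\lambda := F\nu F^{-1}$. By $\Ad$-invariance of the inner product on ${\bf su}_2$, $\nu_\lambda$ has unit length and is orthogonal to $T_\lambda = FTF^{-1}$; moreover $|T_\lambda|\equiv 1$ forces $dx_\lambda = dx$. Differentiating $\nu_\lambda = F\nu F^{-1}$ with the help of $F' = F\lambda T$ and $(F^{-1})' = -\lambda TF^{-1}$ gives
\[
(\nu_\lambda)' = F\bigl(\nu' + \lambda[T,\nu]\bigr)F^{-1}.
\]
Under the standard identification $\R^3 = {\bf su}_2$ for which $[X,Y] = X\times Y$, the $\Ad$-invariance of the inner product and of the cross product produces
\[
(\nabla\nu_\lambda, T_\lambda\times\nu_\lambda) = (\nabla\nu, T\times\nu) + \lambda|T\times\nu|^2\,dx = (\nabla\nu, T\times\nu) + \lambda\, dx,
\]
using $T\perp \nu$ and $|T|=|\nu|=1$. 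Integrating over $I$ and invoking Lemma~\ref{lem:torsion} gives $E_2(\gamma_\lambda) = E_2(\gamma) + \lambda E_1(\gamma)$.

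The main obstacle is the bookkeeping in (ii): one has to keep track of how the base point $\gamma(x_0)$ enters when the Sym formula is evaluated at $\tau(x_0)$ and how the affine translation part of the monodromy interacts with $\partial_\lambda\tilde A_\lambda$, because shifting the origin of $\R^3$ modifies $a_\lambda$ by a term of the form $\tilde A_\lambda\gamma(x_0)\tilde A_\lambda^{-1} - \gamma(x_0)$. The remaining steps are disciplined applications of $\Ad$-invariance of the Killing form and the identification $[\,,\,] = \times$ on ${\bf su}_2$, whose normalization must match the author's conventions in order for (iii) to come out with the clean factor of $\lambda$.
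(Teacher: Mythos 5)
Your argument is correct and follows essentially the same route as the paper: verify the Sym formula by differentiating $F'=F\lambda T$ in $\lambda$, obtain $\tilde A_\lambda=(\tau^*F)(x_0)A$ from the observation that $\tau^*FA$ (equivalently your $A^{-1}(\tau^*F)A$) solves the same linear ODE as $F$, and compute the torsion by conjugating a unit normal $\nu$ to $\nu_\lambda=F_\lambda\nu F_\lambda^{-1}$ and applying Lemma~\ref{lem:torsion}. Two small points of bookkeeping: $\nu$ should be taken equivariant, $\tau^*\nu=A\nu A^{-1}$, so that $\nu_\lambda\in T_{\gamma_\lambda}\mM_{\tilde A_\lambda}$ and Lemma~\ref{lem:torsion} is actually applicable to $\gamma_\lambda$; and evaluating the Sym formula at $\tau(x_0)$ honestly yields $a_\lambda=\gamma(x_0)+\tfrac{d\tilde A_\lambda}{d\lambda}\tilde A_\lambda^{-1}-\tilde A_\lambda\,\gamma(x_0)\,\tilde A_\lambda^{-1}$, which reduces to the displayed formula exactly when the origin of $\R^3$ is placed at $\gamma(x_0)$ --- the origin-dependence you already flag as the main obstacle.
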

\begin{proof}
To check the Sym formula, we calculate
\[
\gamma_{\lambda}'=(\frac{dF_{\lambda}}{d\lambda}F_{\lambda}^{-1})'=\frac{dF_{\lambda}'}{d\lambda}F_{\lambda}^{-1}=F_{\lambda}TF_{\lambda}^{-1}\,.
\]
Since $F(x_0)=1$ we also have $\frac{dF(x_0)}{d\lambda}F^{-1}(x_0)=0$, which verifies $\gamma_{\lambda}(x_0)=\gamma(x_0)$.
From the Sym formula we read off the monodromy of the curve $\gamma_{\lambda}$ as stated. To compute the formula for $\tilde{A}$, we observe that the ODE \eqref{eq:Fode} has $\tau^*FA$ as a solution since $\gamma$ has rotational monodromy $\tau^*T=ATA^{-1}$. Therefore, $\tau^* FA= \tilde{A}F$ for some $\tilde{A}\in\Lambda^{+} {\bf SU}_2$ and evaluation at $x_0\in M$ gives $\tilde{A}= (\tau^*F)(x_0)A$.  For the total torsion of $\gamma_{\lambda}$ we use its characterization in Lemma~\ref{lem:torsion}:
if $\nu \in T_{\gamma}\mM_A$ is a normal vector field along $\gamma$, then $\nu_{\lambda}=F_{\lambda}\nu F_{\lambda}^{-1}\in T_{\gamma_{\lambda}}\mM_{\tilde{A}_{\lambda}}$ is a normal field along $\gamma_{\lambda}$. Therefore,  
\begin{align*}
E_2(\gamma_{\lambda})&=\int_I(\nabla \nu_{\lambda}, T_{\lambda}\times \nu_{\lambda})=\int_{I} (\nabla T +\lambda T\times \nu dx, T\times \nu )\\
&=E_2(\gamma)+\lambda E_1(\gamma)
\end{align*}
where we used $\nabla T_{\lambda}=F_{\lambda}(\nabla T+\lambda T\times \nu dx)F_{\lambda}^{-1}$ and the invariance of the inner product.
\end{proof}

\begin{figure}[ht]
\centering
\includegraphics[width=.7\columnwidth]{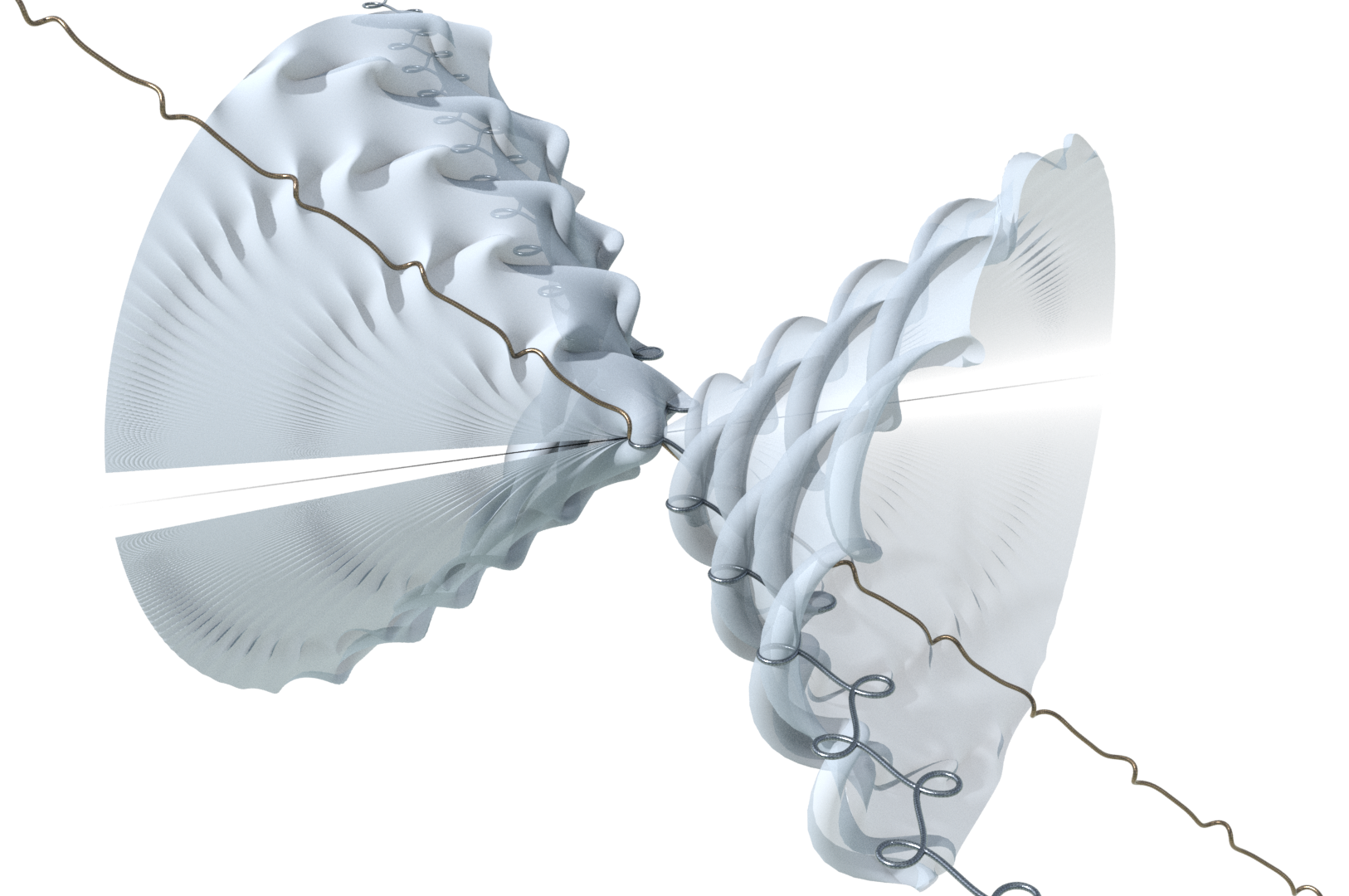}
\caption{The curves $\gamma_{\lambda}$ in the associated family of an elastic curve sweep out a surface that is close to the quadratic cone swept out by their monodromy axes. The intersection of this cone with a sphere centered at $\gamma(x_0)$ is a quartic curve in bijective algebraic correspondence with the real part of the spectral curve $\Sigma$, which is elliptic  in this case. As the spectral parameter $\lambda\to\infty$, the curves $\gamma_{\lambda}$ straighten, as can be seen when approaching the white sector. 
\label{fig:SpectralCurveCone}}
\end{figure}

Since the flows $Y_k$ are genuine vector fields on $\mM$, they satisfy $\tau^* Y_k=AY_kA^{-1}$ along $\gamma\in\mM$. Thus, the generating loop $Y=\sum_{k\geq 0}Y_k\lambda^{-k}$ along a curve $\gamma\in\mM$ is also invariant under the monodromy  $\tau^*Y=AYA^{-1}$. Therefore, 
\begin{equation}\label{eq:Ycommute}
Y(x_0)=\tau^*(Y(x_0))=\tau^*(FYF^{-1})=\tilde{A}Y(x_0)\tilde{A}^{-1}
\end{equation}
which implies
that the axis of the rotation monodromy $\tilde{A}$ is in the direction of $Y(x_0)$.  Ignoring for the moment that
$Y(x_0)\in\Lambda^{-}\R^3$ is only a formal Laurent series, we can express the  monodromy
\[
\tilde{A} =\exp(-\frac{1}{2}\theta Y(x_0))
\]
by its unit length axis and rotation angle $\theta$.  Note that $\tilde{A}=\tilde{A}(x_0)$ depends on the base point $x_0\in M$ 
by Lemma~\ref{lem:associatedfamily}. In particular, as one moves the base point
\begin{equation}\label{eq:basechange}
\tilde{A}(x)=F(x)^{-1}\tilde{A}F(x)\,,\,\,\, x\in M
\end{equation}
changes by conjugation. This has the important implication that the monodromy angle $\theta$ is independent of $x_0\in M$. Therefore, $\theta$ depends only on the curve $\gamma\in\mM$ and we obtain for each real $\lambda\in\R$  a (modulo $2\pi$) well defined function 
\[
\theta_{\lambda}\colon \mM\to \R\,.
\]
The monodromy angle function $\theta$ of the associated family  turns out to be the Hamiltonian for the generating loop $Y=\sum_{k\geq 0}Y_k\lambda^{-k}$:
\begin{The}\label{thm:sY}
Let $\gamma_t\in \mM$ be a variation of $\gamma=\gamma_0$ with rotational monodromy $A\in{\bf SU}_2$, then
\[
\tfrac{1}{\lambda^{2}}\dot{\theta}=\tfrac{1}{\lambda^{2}} d \theta_{\gamma}(\dot{\gamma})=\sigma(Y,\dot{\gamma})\,.
\]
In other words, $Y$ is the symplectic vector field for the Hamiltonian $\lambda^{-2}\theta$.
\end{The}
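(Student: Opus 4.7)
The plan is to rewrite $d\theta$ as a pairing against $Y(x_0)$ and then to match that pairing with $\lambda^{2}\sigma(Y,\dot\gamma)$ via the Lax equation. Starting from the definition $\tilde A=\exp(-\tfrac{1}{2}\theta Y(x_0))$, observe that $Y(x_0)$ is the axis of $\tilde A$ (in particular $\ad_{Y(x_0)}Y(x_0)=0$) and has unit length by \eqref{eq:Yunit}. Setting $X=-\tfrac{1}{2}\theta Y(x_0)$ and applying the $d\exp$ formula $\tilde A^{-1}d\tilde A=\tfrac{1-\exp(-\ad X)}{\ad X}(dX)$, the axial piece $d\theta\cdot Y(x_0)$ in $dX$ is unaffected since $\ad X$ kills $Y(x_0)$, while the $dY(x_0)$ piece contributes nothing when paired with $Y(x_0)$ by skew-adjointness of $\ad$ combined with $(dY(x_0),Y(x_0))=0$. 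This produces the pairing formula $d\theta=-2(\tilde A^{-1}d\tilde A,Y(x_0))$.

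Second, I would compute $\tilde A^{-1}\dot{\tilde A}$ from $\tilde A=F(x_1)A$ with $A$ fixed. The linearization of $F'=F\lambda T$ requires care because arclength itself depends on $\gamma$; working instead with the parametrization-invariant form $dF=F\lambda\,d\gamma$, one finds that $V:=\dot F F^{-1}$ satisfies $V'=\lambda F\dot\gamma' F^{-1}$, $V(x_0)=0$, and so $V(x_1)=\lambda\int_I F\dot\gamma' F^{-1}\,dx$. Since $\tilde A$ commutes with $Y(x_0)$, one has $(\tilde A^{-1}\dot{\tilde A},Y(x_0))=(V(x_1),Y(x_0))$; combined with $\Ad$-invariance of the inner product and the key identity $Y=F^{-1}Y(x_0)F$, this yields
\[
(V(x_1),Y(x_0))=\lambda\int_I(\dot\gamma',Y)\,dx.
\]

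Third, I would identify this integral with $\lambda^{2}\sigma(Y,\dot\gamma)$ via the Lax equation. From $\sigma(Y,\dot\gamma)=\int_I(T\times Y,\dot\gamma)\,dx$ and $T\times Y=-Y'/\lambda$, integration by parts gives $\sigma(Y,\dot\gamma)=\tfrac{1}{\lambda}\int_I(Y,\dot\gamma')\,dx$; the boundary term $[(Y,\dot\gamma)]_I$ vanishes because $\tau^{*}Y=AYA^{-1}$, $\tau^{*}\dot\gamma=A\dot\gamma A^{-1}$, and the inner product is $\Ad(A)$-invariant. Substituting back yields the stated identity $\tfrac{1}{\lambda^{2}}d\theta=\sigma(Y,\cdot)$ (up to the overall scaling implicit in the paper's conventions for $\theta$).

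The main technical hurdle is the linearization of $F$ in the second paragraph: a naïve arclength computation produces $V'=\lambda F\dot T F^{-1}$ in terms of $\dot T$, but this is incorrect because arclength itself varies with $\gamma$. The correct formula $V'=\lambda F\dot\gamma' F^{-1}$ in terms of $\dot\gamma'=(\dot\gamma)'$ emerges cleanly from the parametrization-invariant form $dF=F\lambda\,d\gamma$. Once this distinction is handled, the rest of the argument is a direct application of $\Ad$-invariance and the Lax equation.
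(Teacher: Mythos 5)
Your proof is correct and follows essentially the same route as the paper's: linearize $F'=F\lambda T$ to express $\dot F F^{-1}$ at $\tau(x_0)$ as an integral over $I$, read off $\dot\theta$ from $\dot{\tilde A}$, and convert $\lambda\int_I(\dot\gamma',Y)\,dx$ into $\lambda^2\sigma(Y,\dot\gamma)$ by integrating by parts against the Lax equation $Y'=-\lambda T\times Y$ (with the boundary term killed by equivariance). The only substantive differences are that you extract $\dot\theta$ by pairing $\tilde A^{-1}\dot{\tilde A}$ with the axis $Y(x_0)$ via the $d\exp$ formula where the paper simply takes the trace of $\dot{\tilde A}$, and that your linearization $dH=F\lambda\,\nabla\dot\gamma\,F^{-1}$ --- rather than the paper's $H'=F\lambda\dot T F^{-1}$, which omits the tangential contribution from $d\dot x$ and compensates only at the final ``replace $\dot T$ by $\dot\gamma$'' step --- is indeed the tighter way to handle the fact that arclength varies with $\gamma$.
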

Before continuing, we need to remedy the problem that the generating loop $Y$ of the genuine vector fields $Y_k$ is only a formal series.
From  \eqref{eq:Ycommute} we know that $Y(x_0)$ is in the direction of the axis of the rotation $\tilde{A}$, which is given by the trace-free part
of $\tilde{A}$. Since $\tilde{A}\in\Lambda^{+}{\bf SU}_2$ is a real analytic loop, we may choose a unit length real analytic map $\tilde{Y}(x_0)\colon \R\to S^2\subset \R^3$ spanning the axis of $\tilde{A}$, that is
\[
\R \tilde{Y}(x_0)_{\lambda}=\R (\tilde{A}_{\lambda}-\tfrac{1}{2} \Tr \tilde{A}_{\lambda}\one)\,.
\]
 From \eqref{eq:basechange} we deduce $\tilde{Y}=F^{-1}\tilde{Y}(x_0)F$ and therefore $\tilde{Y}$ satisfies the same Lax equation \eqref{eq:LaxY} as the formal loop $Y$. It can be shown by asymptotic analysis \cite[Chapter~I.3]{faddeev1987} that $\tilde{Y}_{\lambda}\to T$ as $\lambda\to \infty$  in the smooth topology over a fundamental domain $I\subset M$. Since $T_{\lambda}=F_{\lambda}TF_{\lambda}^{-1}$, we also obtain that $T_{\lambda}\to T(x_0)$ as $\lambda\to\infty$. In particular, the curves $\gamma_{\lambda}$ in the associated family straighten out and tend to the line with tangent $T(x_0)$, as can be seen in Figure~\ref{fig:SpectralCurveCone}. Knowing that $\tilde{Y}$ is smooth at $\lambda=\infty$, it has the Taylor series expansion $\tilde{Y}=\sum_{k\geq 0}\tilde{Y}_k\lambda^{-k}$ and satisfies the recursion equation \eqref{eq:LaxY} with the same initialization $\tilde{Y}_0=Y_0=T$. But then the explicit construction of the solutions to this recursion in Theorem~\ref{thm:recursionsolution} shows that $Y_k=\tilde{Y}_k$ for $k\geq 0$. 
In other words, instead of working with $Y$ in the construction of the angle function $\theta$, we need to work with $\tilde{Y}$, and likewise in any of the arguments to come. We will still keep writing $Y$, but think $\tilde{Y}$, as not to stray too far from our geometric intuition. 

With this being said, 
we come to the proof of Lemma~\ref{thm:sY}.
\begin{proof}
We first derive an expression for the variation $\dot{\tilde{A}}$ of the monodromy. Writing  $\dot{F}=HF$, then $F'=F(\lambda T)$, $F(x_0)=\one$ implies that 
\[
H'=F\lambda \dot{T}F^{-1}\,,\,\,\, H(x_0)=0\,.
\]
From $\tilde{A}=(\tau^*F)(x_0)A$ we obtain
\begin{align*}
\dot{\tilde{A}}&=(\tau^*\dot{F})(x_0)A=(\tau^*H)(x_0)(\tau^*F)(x_0)A=(\tau^*H)(x_0)\tilde{A}\\
&=\int_I F\lambda\dot{T}F^{-1}\tilde{A} dx=\int_I F\lambda\dot{T}\tilde{A}(x)F^{-1}dx
\end{align*}
where we used \eqref{eq:basechange} and the fundamental domain $I\subset M$  has oriented boundary $\del I=\{x_0\}\cup \{\tau(x_0)\}$. Since $\tilde{A}(x)=\exp(-\tfrac{1}{2}\theta Y(x))=\cos(\tfrac{1}{2}\theta)\one-\sin(\tfrac{1}{2}\theta)\,Y(x)$ with $Y(x)$  trace free, we arrive at 
\begin{align*}
-\dot{\theta}\sin(\tfrac{1}{2}\theta) &=\Tr \dot{\tilde{A}}=\int_I \Tr(\lambda \dot{T} \tilde{A}(x))dx=\int_I \Tr(\lambda \dot{T} (\cos(\tfrac{1}{2}\theta)\one -
\sin(\tfrac{1}{2}\theta)\, Y))\,dx\\
&=-\sin(\tfrac{1}{2}\theta) \int_I(\lambda \dot{T},Y)dx\,.
\end{align*}
Finally, we cancel $\sin(\tfrac{1}{2}\theta)$ on both sides, replace $\dot{T}$ with $\dot{\gamma}$ using \eqref{eq:dotgamma}, and integrate by parts to get
\begin{align*}
\dot{\theta}&=\int_I (\lambda \dot{T} , Y)dx=-\int_I(\lambda \dot{\gamma}, Y')dx=\int_I(\lambda \dot{\gamma}, \lambda\,T\times Y)dx\\
&=\sigma(\lambda^2 Y, \dot{\gamma})
\end{align*}
where we also used \eqref{eq:LaxY}.
\end{proof}
It remains to calculate an explicit expression for the Taylor expansion 
\begin{equation}
\tfrac{1}{\lambda^2}\theta=\sum_{k\geq 0}E_k\lambda^{-k}
\label{thetaexpansion}
\end{equation}
of the monodromy angle $\theta$ at $\lambda=\infty$ to obtain concrete formulas for the hierarchy of Hamiltonians $E_k\colon\mM\to\R$ from Theorem~\ref{thm:sY}. Starting with a curve $\gamma\in\mM$ with rotation monodromy $A$, we consider its associated family of curves $\gamma_{\lambda}$ with monodromy $\tilde{A}_{\lambda}$ at the base point $x_0\in M$.  The tangent image $T_{\lambda}\colon M\to S^2\subset \R^3$ defines the following sector on $S^2$, see Figure~\ref{fig:GaussBonnet}:
\begin{enumerate}
\item
The monodromy axis point $Y_{\lambda}(x_0)$ connected to $T_{\lambda}(x_0)$ by a geodesic arc, then the curve $T_{\lambda}$ traversed from $x_0$ along a fundamental domain $I\subset M$ to  $\tau(x_0)$, and the geodesic arc connecting $T_{\lambda}(\tau(x_0))$ back to the  axis point $Y_{\lambda}(x_0)$. Since $Y_{\lambda}\to T$ tends to  $T$ and $T_\lambda\to T(x_0)$  tends to  $T(x_0)$  for large $\lambda\in\R$, there is no ambiguity in this prescription.  
\item
The exterior angles of this sector are $\pi-\theta_{\lambda}$ at $Y_{\lambda}(x_0)$,  some angle $\beta$ at $T_{\lambda}(x_0)$, and the angle $\pi-\beta$ at $T_{\lambda}(\tau(x_0))$, since $T_{\lambda}$ has rotation monodromy around the axis $Y_{\lambda}(x_0)$ with angle $\theta_{\lambda}$. 
\end{enumerate}
\begin{figure}[ht]
  \centering
  \includegraphics[width=.4\columnwidth]{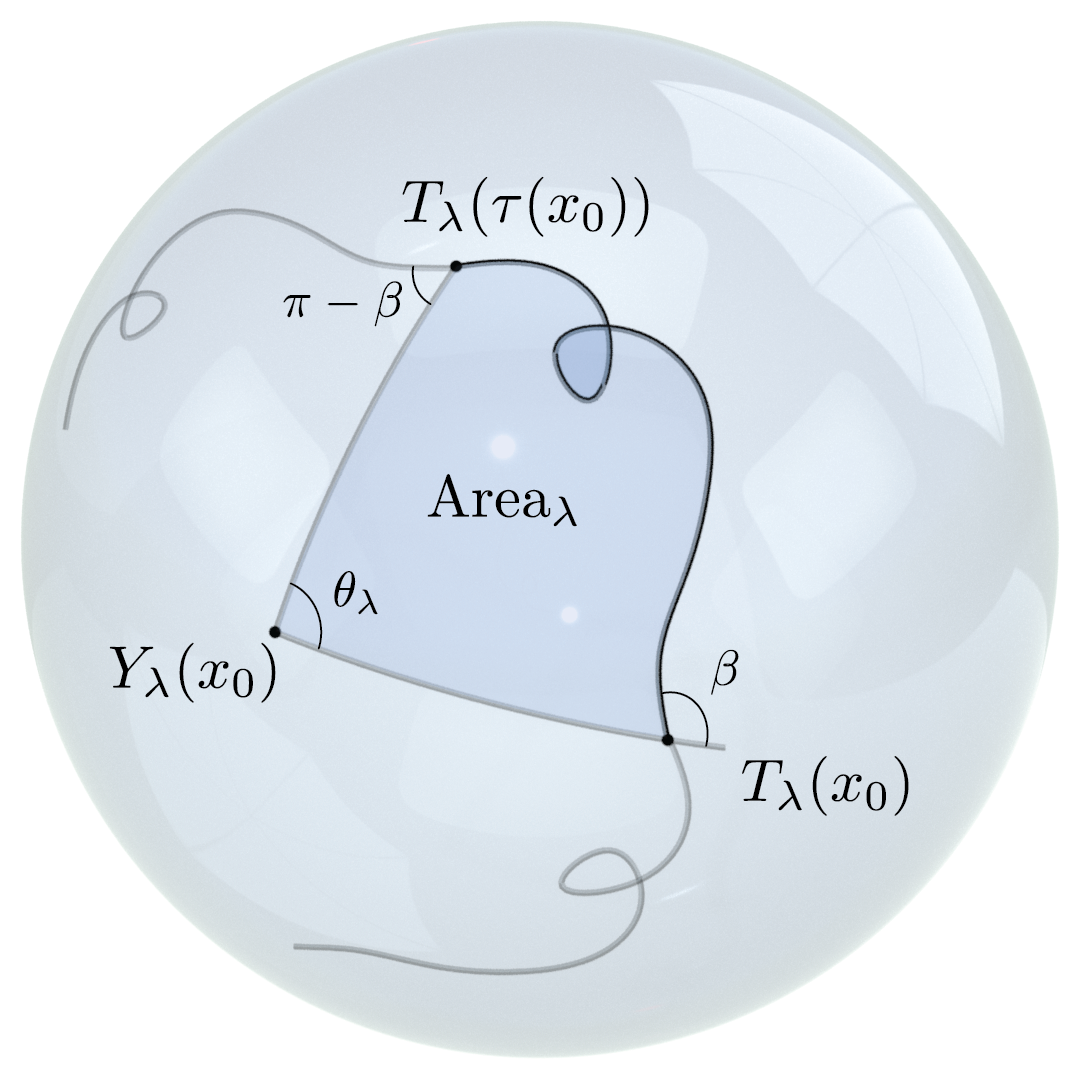}
  \caption{The monodromy angle $\theta_\lambda$ can be computed in terms of the area of the sector enclosed by the tangent image $T_\lambda$.
  \label{fig:GaussBonnet}}
\end{figure}

Parallel transport in the normal bundle $\perp_{\gamma_{\lambda}}\!\!M$ along  $\gamma_{\lambda}$ is the same as parallel transport with respect to the Levi-Civita connection along the tangent image $T_{\lambda}$. Applying the Gau{\ss}--Bonnet Theorem to the parallel transport around our sector of area $ \text{Area}_{\lambda}$ gives the relation
\[
E_2(\gamma_{\lambda})+(\pi-\theta_{\lambda})+\beta + (\pi-\beta)+ \text{Area}_{\lambda}=2\pi\,.
\]
where we used Lemma~\ref{lem:torsion}. Applying Lemma~\ref{lem:associatedfamily} (iii), this unravels to 
\begin{equation*}\label{eq:GaussBonnet}
\theta_{\lambda}=\lambda E_1(\gamma) + E_2(\gamma)+ \text{Area}_{\lambda}\,.
\end{equation*}
The area of a spherical sector of the type above is given by 
\[
\text{Area}_{\lambda}= \int_I\frac{\det(Y_\lambda(x_0), T_\lambda, T_{\lambda}')}{1+(Y_\lambda(x_0),T_{\lambda})}\, dx
\]
which we can further simplify: from Definition~\ref{def:associatedfamily} of the associated family, we have $T_{\lambda}=F_{\lambda} T F_{\lambda}^{-1}$. Furthermore, $Y$ as a solution to the Lax equation \eqref{eq:LaxY} satisfies $Y=F^{-1}Y(x_0)F$,  which leads to 
\[
\text{Area}_{\lambda}= \int_I\frac{\det(Y, T, T')}{1+(Y,T)}\, dx\,.
\]
To calculate $(Y,T)=\sum_{k\geq 0}(Y_k,T)\lambda^{-k}$, we need the tangential components of $Y_k$, which can be read off from the proof of Theorem~\ref{thm:recursionsolution}:
\[
(T,Y_0)=1, \quad (T,Y_1)=0 ,\quad (T,Y_k) =-\tfrac{1}{2}\sum_{i=1}^{k-1}(Y_{k-i},Y_{i})\,.
\]
Expressing $\tfrac{1}{1+(T,Y)}$ via the geometric series, Theorem~\ref{thm:sY} gives rise to explicit formulas for the Hamiltonians $E_k$.
\begin{The}\label{Hamiltonians}
The Hamiltonians $E_k$ for the commuting flows $Y_k$ are given by the generating function
\[
\sum_{k\geq 0}E_k\lambda^{-k}=0+E_1\lambda^{-1}+E_2\lambda^{-2} + \tfrac{1}{2} \int_I \sum_{i\geq 1, j\geq 0} (Y_i, Y_1)\lambda^{-i}(-1)^j(\sum_{l\geq 2} \tfrac{1}{2}(T,Y_l)\lambda^{-l})^j\, dx\,.
\]
For instance, 
\begin{itemize}
\item
$E_4=- \tfrac{1}{2}\int_I\det(\gamma',\gamma'',\gamma''')dx$,
\item
 $E_5=\int_I (\tfrac{1}{2}|\gamma'''|^2-\tfrac{5}{8}|\gamma''|^4)\,dx$,
\item
$E_6=\int_I (-\tfrac{1}{2}\det(\gamma',\gamma''',\gamma'''')+\tfrac{7}{8}|\gamma''|^2\det(\gamma',\gamma'',\gamma'''))\,dx$.
\end{itemize}
\end{The}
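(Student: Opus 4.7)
The plan is to combine the Gau{\ss}--Bonnet identity
\[
\theta_\lambda=\lambda E_1(\gamma)+E_2(\gamma)+\mathrm{Area}_\lambda
\]
derived immediately before the theorem with Theorem~\ref{thm:sY}, which gives $\tfrac{1}{\lambda^2}\theta=\sum_{k\geq 0}E_k\lambda^{-k}$. This reduces the problem to expanding the spherical-sector area
\[
\mathrm{Area}_\lambda=\int_I\frac{\det(Y,T,T')}{1+(Y,T)}\,dx
\]
as a formal power series in $\lambda^{-1}$ along the curve, then extracting coefficients to identify $E_3,E_4,\ldots$ and specialising to $k=4,5,6$.

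For the expansion I would first simplify the integrand. The numerator becomes $\det(Y,T,T')=(Y,T\times T')=(Y,Y_1)$ because $T\times T'=\gamma'\times\gamma''=Y_1$ by Theorem~\ref{thm:lengthgrad}; its Laurent expansion is $\sum_{i\geq 1}(Y_i,Y_1)\lambda^{-i}$, with the $i=0$ term absent since $(T,T\times T')=0$. For the denominator I use the tangential formulas $(T,Y_0)=1$, $(T,Y_1)=0$, and $(T,Y_k)=-\tfrac{1}{2}\sum_{i=1}^{k-1}(Y_{k-i},Y_i)$ for $k\geq 2$ supplied by the proof of Theorem~\ref{thm:recursionsolution}, so that
\[
1+(Y,T)=2\Bigl(1+\sum_{l\geq 2}\tfrac{1}{2}(T,Y_l)\lambda^{-l}\Bigr).
\]
Inverting via the geometric series and multiplying the two expansions reproduces the generating function asserted in the theorem; matching powers of $\lambda^{-1}$ on both sides of $\tfrac{1}{\lambda^2}\theta=\sum E_k\lambda^{-k}$ then recovers each $E_k$ for $k\geq 3$.

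The explicit values of $E_4,E_5,E_6$ are then extraction exercises combined with integration by parts on $I$ to discard exact $1$-forms. Using $Y_1=\gamma'\times\gamma''$ and $Y_2=-\gamma'''-\tfrac{3}{2}|\gamma''|^2\gamma'$ from Table~\ref{tab:hierarchy} together with the identity $(T,Y_2)=-\tfrac{1}{2}|\gamma''|^2$, the contribution yielding $E_4$ collapses to $\tfrac{1}{2}\int_I(Y_2,Y_1)\,dx=-\tfrac{1}{2}\int_I\det(\gamma',\gamma'',\gamma''')\,dx$, since the tangential part of $Y_2$ is orthogonal to $Y_1$. For $E_5$ the principal term $\tfrac{1}{2}(Y_3,Y_1)$ simplifies modulo exact forms via $\gamma''\cdot\gamma''''=(\gamma''\cdot\gamma''')'-|\gamma'''|^2$, and combined with the cross term $-\tfrac{1}{4}(Y_1,Y_1)(T,Y_2)=\tfrac{1}{8}|\gamma''|^4$ this yields $\tfrac{1}{2}|\gamma'''|^2-\tfrac{5}{8}|\gamma''|^4$.

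The main obstacle will be the bookkeeping for $E_6$. One must combine $\tfrac{1}{2}(Y_4,Y_1)$ with the cross terms $-\tfrac{1}{4}(Y_2,Y_1)(T,Y_2)$ and $-\tfrac{1}{4}(Y_1,Y_1)(T,Y_3)$, and reduce the resulting differential polynomial to the canonical form $-\tfrac{1}{2}\det(\gamma',\gamma''',\gamma'''')+\tfrac{7}{8}|\gamma''|^2\det(\gamma',\gamma'',\gamma''')$ by repeated integration by parts on $I$. This requires computing $Y_4$ from the recursion of Theorem~\ref{thm:recursionsolution} and tracking the cancellations carefully, since a priori many independent monomials in the derivatives $\gamma^{(k)}$ appear and must all collapse to just these two.
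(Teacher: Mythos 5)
Your proposal follows exactly the paper's own route: Gau{\ss}--Bonnet applied to the tangent-image sector combined with Lemma~\ref{lem:associatedfamily}(iii) and Theorem~\ref{thm:sY}, then the simplification $\det(Y,T,T')=(Y,Y_1)$ and the factorization $1+(Y,T)=2\bigl(1+\sum_{l\geq 2}\tfrac{1}{2}(T,Y_l)\lambda^{-l}\bigr)$ inverted by the geometric series, with the explicit $E_4,E_5,E_6$ obtained by coefficient extraction and integration by parts. The argument and the sample computations (e.g.\ the cross term $-\tfrac{1}{4}(Y_1,Y_1)(T,Y_2)=\tfrac{1}{8}|\gamma''|^4$ for $E_5$) are correct and match the paper.
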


\section{Darboux transforms and the spectral curve revisited}
The geometry of the associated family $\gamma_{\lambda}$ of a curve $\gamma\in\mM$ played a pivotal role in our discussions so far. For instance, from \eqref{eq:eigenlinecurve} the real 
part of the spectral curve $\Sigma$ is given by the eigenlines of the monodromy $A_{\lambda}$ for real spectral parameter $\lambda\in\R$.  A natural question to ask is whether there are associated curves $\gamma_{\lambda}$ for complex spectral parameters $\lambda\in\C$.  This is indeed the case, provided that we allow the curves $\gamma_{\lambda}$ to live as curves with monodromy in hyperbolic 3-space $H^3$.  The two
 fixed points of their monodromies on the sphere at infinity of $H^3$  exhibit the spectral curve $\Sigma$ as a hyperelliptic branched cover over the complex $\lambda$-plane. Moreover, these fixed points give rise to periodic Darboux transforms $\eta\in\mM$  of the original curve $\gamma$ as curves in $\mathbb{R}^3$. Thus, we have associated to a curve $\gamma\in\mM$ of a given monodromy $h$ a Riemann surface $\Sigma$ worth of curves $\eta\in\mM$, the Darboux transforms of $\gamma$ of the same monodromy $h$. 
 \begin{figure}[ht]
  \centering
  \includegraphics[width=.8\columnwidth]{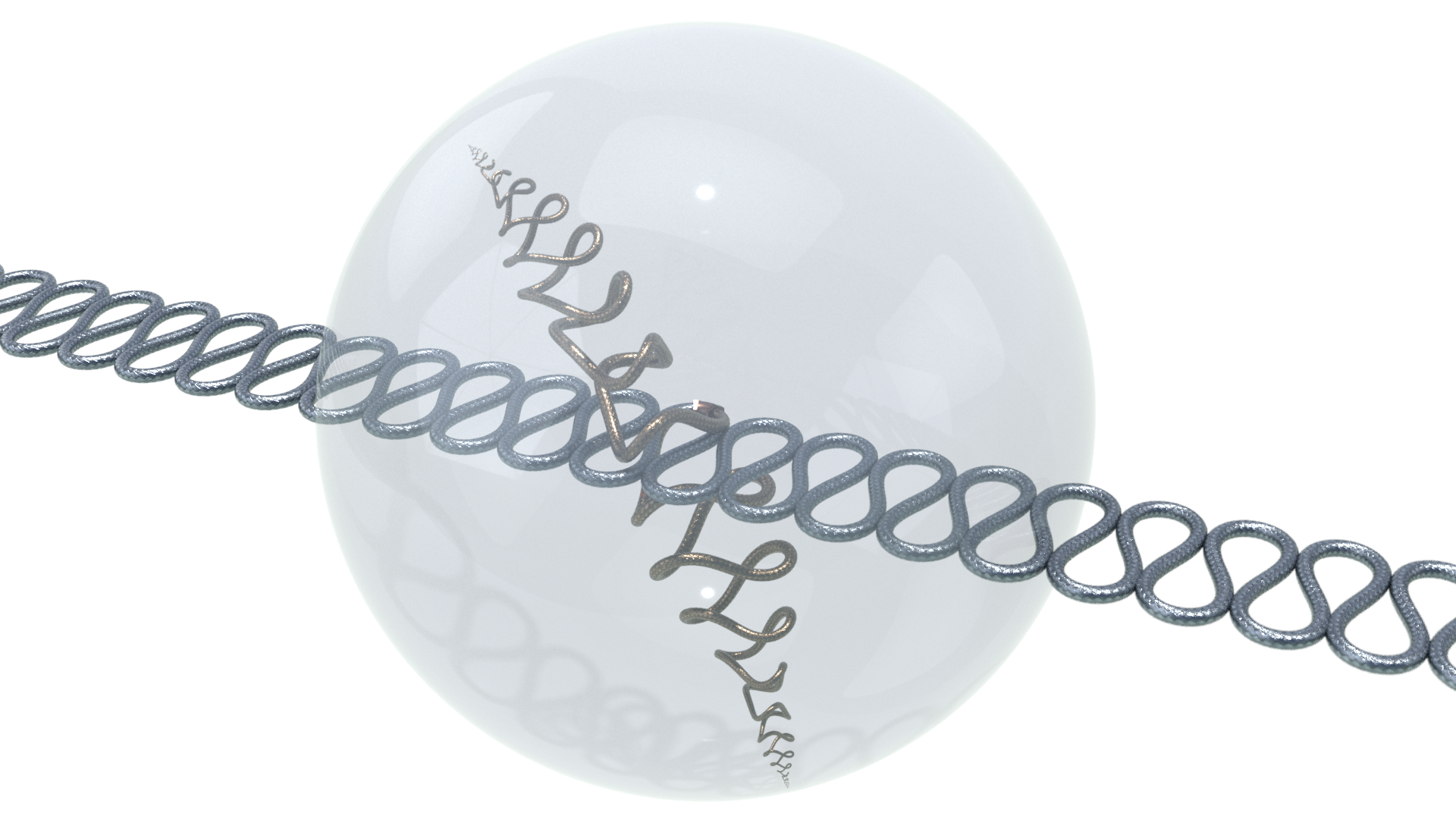}
  \caption{A curve $\gamma$ (blue) together with a member $\gamma_{\lambda}$ (gold) of its associated family for complex $\lambda$ in hyperbolic $3$-space.
  \label{fig:HyperbolicAssociateFamily}}
\end{figure}
 Given $\gamma\in\mM$ and using the notation from the previous section, we have a solution $F_{\lambda}\colon M\to {\bf SL}_2(\C)$ of 
 $F'=F(\lambda T)$, $F(x_0)=\one$, for all $\lambda\in\C$.  Using the description of hyperbolic space $H^3$ as the set of all hermitian $2\times 2$-matrices with determinant one and positive trace,  we define the associated family 
 \[
 \gamma_\lambda\colon M \to H^3\,,\quad \gamma_\lambda =F_{\lambda}F_{\lambda}^*
 \]
 for non-real values $\lambda\in\C\setminus \R$.  Since $\tau^*F A=\tilde{A} F$ with $\tilde{A}\in\Lambda^{+}{\bf SL}_2(\C)$ for non-real $\lambda$, 
 the curves $\gamma_{\lambda}$ in hyperbolic space $H^3$ have monodromy
 \[
 \tau^*\gamma_{\lambda} =\tilde{A}_{\lambda}\gamma_{\lambda} \tilde{A}_{\lambda}^*\,.
 \]
 Furthermore, the curves $\gamma_{\lambda}$ have constant speed  $2\Im(\lambda)$, as can be seen from
 \[
 \gamma_\lambda'=F(\lambda T +\bar{\lambda}T^*)F^*=2\Im(\lambda) iFTF^*\,.
 \]
 Stereographic projection 
 \[
 \pi\colon H^3 \to {\bf su}_2=\mathbb{R}^3\,,\quad \pi(p)= i\,\frac{\Tr p\,\one-2 p}{2+\Tr p}
 \]
 realizes  $H^3$ as the Poincare model $B^3$, the unit ball centered at the origin. Since $d_{\one}\pi(T)=\frac{T}{2}$, we rescale $B^3$ by $\frac{1}{\Im \lambda}$ and center it at $\gamma(x_0)$. Then the curve
\[
\hat{\gamma}_\lambda=\gamma(x_0)+\frac{1}{\Im \lambda} \pi \circ \gamma_\lambda
\]
in $\R^3$ touches $\gamma$ to first order at $\gamma(x_0)$, see Figure~\ref{fig:HyperbolicAssociateFamily}.

Viewed as hyperbolic motions, the monodromy matrices $\tilde{A}_{\lambda}$  have two fixed points on the sphere at infinity. Stereographic projection $\pi$ realizes these fixed points as two unit vectors $S_\pm\in S^2$. The set of all pairs $(\lambda,S_{\pm})\in \mathbb{C}\times S^2=\mathbb{C}\times \CP^1$ is biholomorphic to the eigenline spectral curve \eqref{eq:eigenlinecurve}. This means that as $\lambda$ varies, the pairs $(S_+,S_-)$ of points on $S^2$ trace out a Euclidean image of the spectral curve, see Figure~\ref{fig:ComplexSpectralCurve}.
\begin{figure}[ht]
  \centering
  \includegraphics[width=.5\columnwidth]{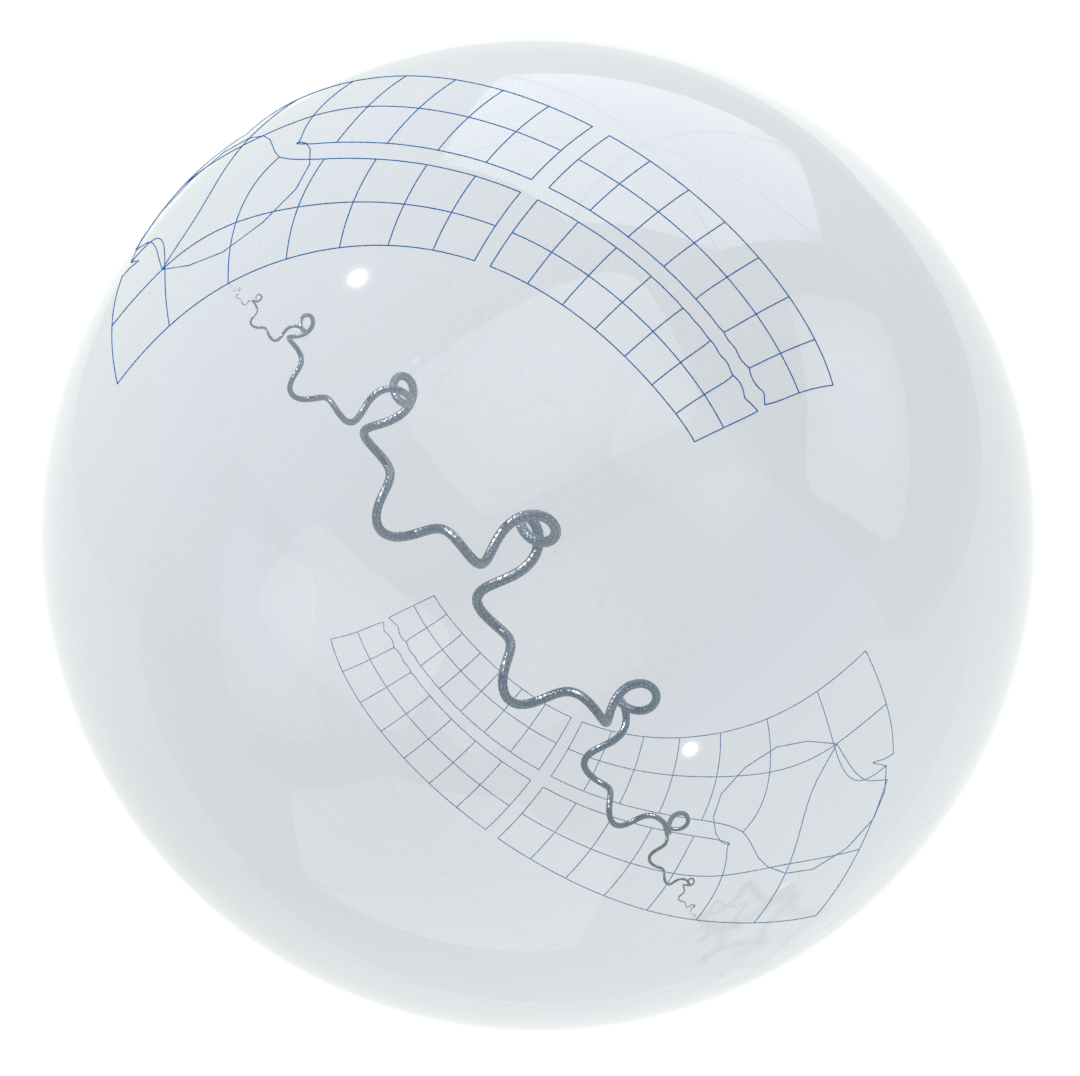}
  \caption{The part of the spectral curve that lies over a rectangular domain in the $\lambda$-plane.
  \label{fig:ComplexSpectralCurve}}
\end{figure}

It turns out that a slightly different scaling of the hyperbolic bubbles around the points $\gamma(x)$, which agrees with the above in case $\lambda\in i\R$ is purely imaginary, is closely related to {\em Darboux transforms} in the sense of \cite{hoffmann2008,pinkall2007} of $\gamma\in\mM$, see Figure~\ref{fig:Darboux}:

\begin{The}
Let $\gamma\in\mM$ then the two curves $\eta_{\pm}\colon M\to \R^3$ given by
\[
\eta_{\pm}= \gamma+\frac{\Im(\lambda)}{|\lambda|^2}S_{\pm}
\]
are Darboux transforms of $\gamma$ having the same monodromy as $\gamma$, that is $\eta_{\pm}\in\mM$. In particular, $\eta_{\pm}$ have constant distance to $\gamma$ and induce the same arclength on $M$ as $\gamma$. All Hamiltonians  $E_k$,  $k\geq -2$, satisfy 
\[
E_k(\eta_{\pm})=E_k(\gamma)\,.
\]
\end{The}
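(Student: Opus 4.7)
The plan is to realize the fixed points $S_\pm$ explicitly via projective lifts of eigenvectors of the monodromy matrix $\tilde A_\lambda$, reduce the geometric statements to algebraic identities in ${\bf su}_2$, and then appeal to the angle-function formula of Theorem~\ref{Hamiltonians} for the invariance of the Hamiltonians.

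First I would choose eigenvectors $v_\pm\in\C^2$ of $\tilde A_\lambda$ with eigenvalues $\mu_\pm$ and define $\phi_\pm(x)=F(x)^{-1}v_\pm$. Since $F'=F\lambda T$ forces $(F^{-1})'=-\lambda T F^{-1}$, these sections satisfy the dual ODE $\phi_\pm'=-\lambda T\phi_\pm$. Writing $P_\pm=\phi_\pm\phi_\pm^{*}/|\phi_\pm|^{2}$ for the rank-one hermitian projector onto $\C\phi_\pm$, the ball-model stereographic projection is given, up to sign, by $S_\pm=i(\one-2P_\pm)\in S^{2}\subset{\bf su}_{2}$. The monodromy and distance claims are then immediate: from $\tau^{*}F=\tilde A F A^{-1}$ and $\tilde A v_\pm=\mu_\pm v_\pm$ one obtains $\tau^{*}\phi_\pm=\mu_\pm^{-1}A\phi_\pm$, whence $\tau^{*}P_\pm=AP_\pm A^{-1}$ and $\tau^{*}S_\pm=AS_\pm A^{-1}$. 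Consequently $\tau^{*}\eta_\pm=A\eta_\pm A^{-1}+a$, placing $\eta_\pm$ in $\mM$, and $|S_\pm|=1$ makes the Euclidean distance $|\eta_\pm-\gamma|=|\Im\lambda|/|\lambda|^{2}$ constant along the curve.

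The same-arclength statement is the first computational point. Differentiating $P_\pm$ along the ODE gives $P_\pm'=-\lambda TP_\pm+\bar\lambda P_\pm T+c_\pm P_\pm$ with $c_\pm=2\,\Im(\lambda)(S_\pm,T)$ fixed by $\Tr P_\pm'=0$. Using $T^{2}=-\one$ in ${\bf su}_{2}$ together with $\Tr(P_\pm T)=-i(S_\pm,T)$ and $P_\pm TP_\pm=\Tr(P_\pm T)\,P_\pm$, a short calculation yields
\[
(T,S_\pm')=-2\,\Im(\lambda)\bigl(1-(S_\pm,T)^{2}\bigr),\qquad |S_\pm'|^{2}=4|\lambda|^{2}\bigl(1-(S_\pm,T)^{2}\bigr).
\]
Substituting into $|\eta_\pm'|^{2}=\bigl|T+\tfrac{\Im\lambda}{|\lambda|^{2}}S_\pm'\bigr|^{2}$ makes the linear and quadratic corrections cancel exactly, giving $|\eta_\pm'|=1$.

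The main obstacle will be the invariance of every Hamiltonian $E_{k}(\eta_\pm)=E_{k}(\gamma)$ for $k\geq-2$. The natural route is the spectral/dressing picture: a Darboux transformation at spectral value $\lambda_{0}$ should be implementable on the extended frames by a pointwise dressing $F_\eta(\lambda)=g(\lambda,x)F_\gamma(\lambda)h(\lambda)$, with $g$ meromorphic in $\lambda$ with poles at $\lambda_{0}$ and $\bar\lambda_{0}$, that conjugates the full $\lambda$-dependent monodromy of $\gamma$ to that of $\eta_\pm$. Conjugation preserves the trace of $\tilde A_\lambda$, hence the monodromy angle function $\theta_\lambda$ for every $\lambda\in\C$; reading off the Taylor coefficients at $\lambda=\infty$ via Theorem~\ref{Hamiltonians} then gives $E_{k}(\eta_\pm)=E_{k}(\gamma)$ for $k\geq 1$. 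The remaining two cases $E_{-1}$ and $E_{-2}$ must be checked by hand: expanding the area and volume integrands in powers of $c=\Im(\lambda)/|\lambda|^{2}$, I would isolate a total derivative of the $\tau$-periodic primitive $(\gamma\times S_\pm,v)$ and its volume analogue (here $a\parallel v$ for a screw monodromy, so the affine piece drops out of the pairing with $v$), and then show that the remaining bilinear terms in $S_\pm,S_\pm'$ integrate to zero after an integration by parts using the ODE satisfied by $P_\pm$. Constructing the dressing explicitly and verifying the cancellations in the area and volume terms is the delicate point of the argument.
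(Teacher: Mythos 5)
Your treatment of the geometric assertions is correct and in fact more self-contained than the paper's. The paper obtains the ODE for $S_{\pm}$ directly from \eqref{eq:basechange} and then identifies it with equation (25) of the cited Darboux-transform paper, from which constant distance and arclength preservation are simply imported; you instead derive everything from the rank-one projector $P_{\pm}=\phi_{\pm}\phi_{\pm}^{*}/|\phi_{\pm}|^{2}$ onto the eigenline $\phi_{\pm}=F^{-1}v_{\pm}$, and your formulas $(T,S_{\pm}')=-2\Im(\lambda)(1-(S_{\pm},T)^{2})$ and $|S_{\pm}'|^{2}=4|\lambda|^{2}(1-(S_{\pm},T)^{2})$ do make the linear and quadratic corrections in $|\eta_{\pm}'|^{2}$ cancel exactly for the scaling $\Im(\lambda)/|\lambda|^{2}$ (I checked them against the projector ODE; they are consistent up to the paper's loose ${\bf su}_2$ normalization). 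The monodromy equivariance $\tau^{*}S_{\pm}=AS_{\pm}A^{-1}$ via the eigenvalue relation is also fine. This buys independence from the external reference at the cost of some ${\bf su}_2$ algebra.

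For the invariance of the Hamiltonians the two arguments are the same mechanism in different clothing: the paper invokes Bianchi permutability of Darboux transforms (citing the discrete proof and a continuum limit) to conclude that $\eta_{\pm}$ and $\gamma$ have the same monodromy angle function $\theta_{\mu}$ for all $\mu$, while you propose to realize the Darboux transform as a simple-factor dressing of the extended frame, which conjugates the $\mu$-family of monodromies and hence preserves $\Tr\tilde{A}_{\mu}$; both then read off the $E_k$ from \eqref{thetaexpansion}. You leave the construction of the dressing factor as ``the delicate point,'' so your write-up is no more complete than the paper's at this step --- but not less complete either. Where you genuinely add something is the observation that \eqref{thetaexpansion} only produces $E_k$ for $k\geq 0$, so the cases $E_{-1}$ and $E_{-2}$ claimed in the statement are not reached by the angle-function argument at all; the paper's proof passes over this silently. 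Your proposed direct verification for area and volume is the right idea but remains a sketch: after removing the exact term $d(\gamma\times S_{\pm},v)$ the remaining bilinear terms do not cancel pointwise, and the further integration by parts against the ODE for $S_{\pm}$ that you allude to still has to be carried out before this part of the theorem is actually proved.
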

\begin{proof}
 For real $\lambda$, as we move the base point along $M$, equation \eqref{eq:basechange} implies that $S\pm$ satisfy the differential equation 
\[
S_{\pm}'=-\lambda T \times S_{\pm}\,.
\]
Viewing $S\mapsto T\times S$ as a vector field on $S^2$, we see that an imaginary part of $\lambda$ adds a multiple of the same vector field rotated by $\pi/2$:
\[
S_{\pm}'=-\Re(\lambda) T\times S_{\pm} - \Im(\lambda) T\times (T\times S_{\pm})\,.
\]
According to equation (25) of \cite{pinkall2007} (note that the letters $S$ and $T$ are interchanged there) this is precisely the equation needed for
\[
\eta_{\pm}:= \gamma(x_0)+\frac{\Im(\lambda)}{|\lambda|^2}S_{\pm}
\]
to be Darboux transforms of $\gamma$. Darboux transforms exhibit the so-called Bianchi permutability: Darboux transforming $\gamma$ with parameter $\lambda$ followed by a Darboux transform with parameter $\mu$ has the same result as the same procedure with the roles of $\lambda$ and $\mu$ interchanged. This was proved in a discrete setting in \cite{pinkall2007} and follows by continuum limit for the smooth case. The differential equation that determines Darboux transforms is the same as the one that determines the monodromies of the curves in the associated family. Therefore, as a consequence of permutability, $\eta_{\pm}$ has the same monodromy angle function $\theta$ as $\gamma$. Thus, by \eqref{thetaexpansion} we have $E_k(\eta_{\pm})=E_k(\gamma)$.
\end{proof}
\begin{figure}[ht]
  \centering
  \includegraphics[width=.8\columnwidth]{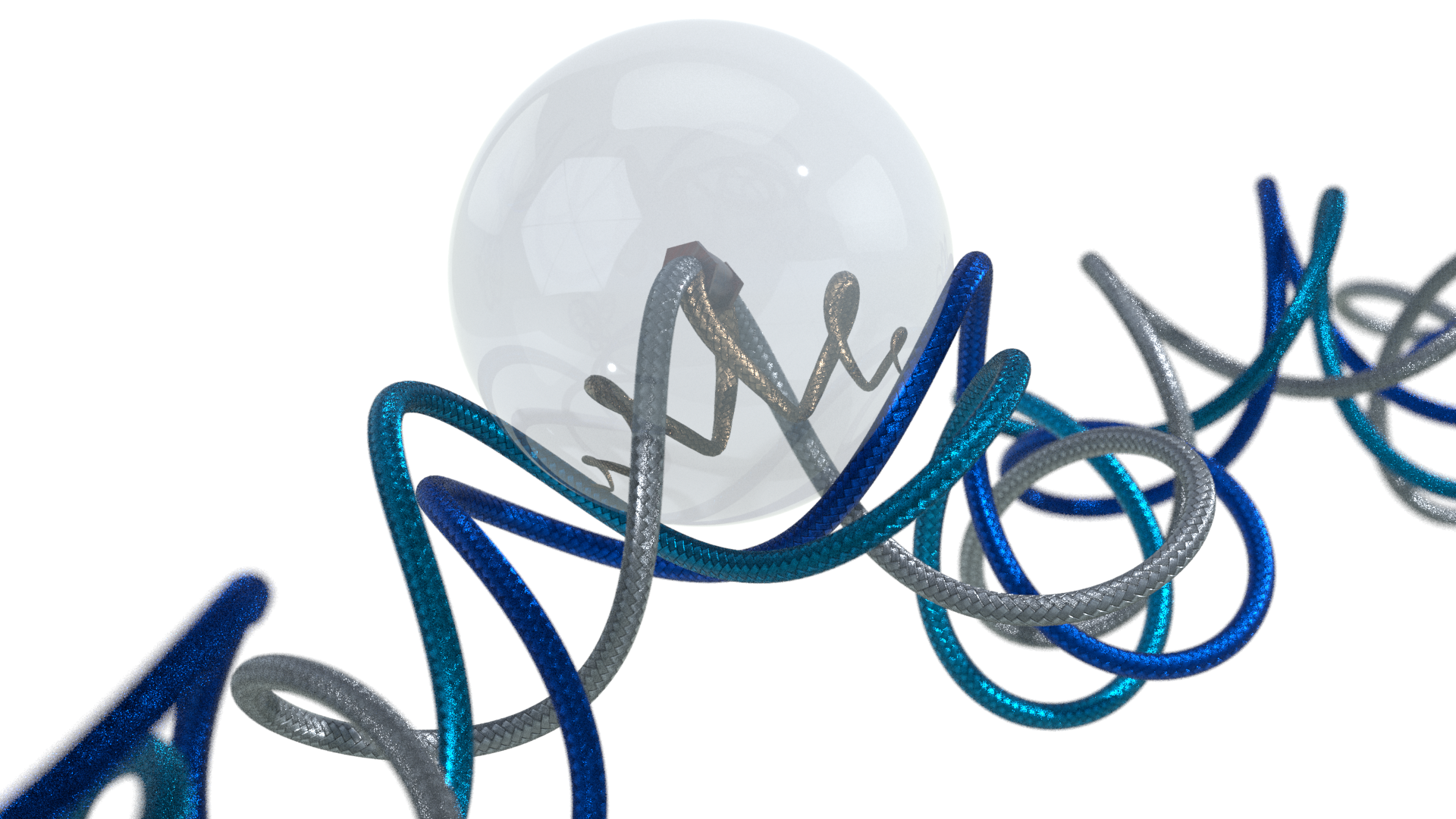}
  \caption{As the base point $x_0$ moves along a curve $\gamma$ (silver), the curves (gold) in its associated family for fixed complex $\lambda$ spiral towards points that trace out Darboux transforms $\eta_{\pm}$ (blue) of $\gamma$.
  \label{fig:Darboux}}
\end{figure}

For a finite gap curve $\gamma\in\mM$  the hierarchy of flows corresponds to the osculating flag of the Abel--Jacobi embedding of the spectral curve $\Sigma$  at the point over $\lambda=\infty$. On the other hand, Darboux transformations correspond to translations along secants of the Abel image of $\Sigma$ in its Jacobian $\Jac(\Sigma)$.

\section{History of elastic curves and Hamiltonian curve flows}\label{sec:history}

For the early history of elastic space curves we follow \cite{tjaden1991}, see also \cite{levien2008,truesdell1983}. In 1691 Jakob Bernoulli posed the problem of determining the shape of bent beams \cite{bernoulli1691}. It was his nephew Daniel Bernoulli who, in 1742, realized in a letter to Euler \cite{Bernoulli1742} that this problem amounts to minimizing $\int\kappa^2$ for the curve that describes the beam. Euler then classified in 1744 all planar elastic curves \cite{euler1744,bryant1986}. Lagrange started in 1811 to investigate elastic space curves, but he ignored the gradient of total torsion that in general has to be part of the variational functional. This was pointed out in 1844 by Bin{\'e}t \cite{binet1844}, who wrote the complete Euler--Lagrange equations and was able to solve them up to quadratures. Then in 1859 Kirchhoff \cite{kirchhoff1859} discovered that these Euler--Lagrange equations can be interpreted as the equations of motion of the Lagrange top, a fact that became famous as the {\em Kirchhoff analogy}. Finally, in 1885 Hermite solved the equations explicitly \cite{hermite1885} in terms of elliptic functions. 

In 1906 Max Born wrote his Ph.D.~thesis on the stability of elastic curves \cite{born1906}. More recently, elastic curves in spaces other than $\mathbb{R}^3$ have been explored \cite{singer2008}. The gradient flows of their variational functionals have been studied from the viewpoint of Geometric Analysis \cite{dziuk2002}. Planar critical points of elastic energy under an area constraint were also investigated \cite{arreaga2002,ferone2016}.

The history of Hamiltonian curve flows begins in 1906 with the discovery of the vortex filament equation by Da Rios \cite{daRios1906}, who was a Ph.D.~student of Levi-Civita. In 1932 Levi-Civita described what nowadays would be called the one-soliton solution \cite{levi1932}. The corresponding curves are indeed elastic loops already found by Euler, but  Da Rios and Levi-Civita were seemingly not aware of this fact. For further details on the history see \cite{ricca1996} and also \cite{ricca1991}. Under the name of {\em localized induction approximation} this equation is a standard tool in Fluid Dynamics \cite{saffman1992}.
Rigorous estimates indicate that this approximation to the 3D Euler equation is valid even over a short time \cite{jerrard2017}.
Recently it became possible to study knotted vortex filaments experimentally \cite{kleckner2013}. The symplectic form on the space of curves was found by Marsden and Weinstein \cite{marsden1983} in 1983, see also Chapter VI.3 of \cite{arnold1999}. Codimension two submanifolds in higher dimensional ambient spaces can be treated similarly \cite{khesin2013}. 

The relationship between vortex filaments and the theory of integrable systems was discovered in 1972 by Hasimoto. He showed the equivalence of the vortex filament flow with the nonlinear Schr{\"o}dinger equation \cite{hasimoto1972}. The non-linear Schr\"odinger hierarchy then led to the discovery of the hierarchy of flows for space curves \cite{langer1991,yasui1998,langer1999}. This made it possible to study in detail \cite{grinevich2000,calini2000,calini2004} those curves in $\R^3$ corresponding to finite gap solutions of the nonlinear Schr{\"o}dinger equation \cite{previato1985}. Since every other of the Hamiltonian curve flows preserves the planarity of curves, this allows for a self-contained approach to the mKdV hierarchy of flows on plane curves \cite{matsutani2016}.

In magnetohydrodynamics vortex filaments can carry a trapped magnetic field, in which case the total torsion becomes part of the Hamiltonian. As discussed above, the resulting flow is the helicity filament flow \cite{holm2004}.

Darboux transformations with purely imaginary $\lambda$ have been studied under the name {\em bicycle transformations} \cite{tabachnikov2017,bor2017}. Those with general complex $\lambda$ were investigated in \cite{pinkall2007} and in \cite{hoffmann2008}, where they were called B{\"a}cklund transformations.

\bibliographystyle{acm}
\bibliography{CurveFlows}

\begin{thebibliography}{10}

\bibitem{arnold1999}
{\sc Arnold, V.~I., and Khesin, B.~A.}
\newblock {\em Topological Methods in Hydrodynamics}, vol.~125.
\newblock Springer, 1999.

\bibitem{arreaga2002}
{\sc Arreaga, G., Capovilla, R., Chryssomalakos, C., and Guven, J.}
\newblock Area-constrained planar elastica.
\newblock {\em Phys. Rev. E 65\/} (Feb 2002), 031801.

\bibitem{Bernoulli1742}
{\sc Bernoulli, D.}
\newblock The 26th letter to {E}uler.
\newblock In {\em Correspondence {M}ath{\'e}matique et {P}hysique}, vol.~2.
  1742.

\bibitem{bernoulli1691}
{\sc Bernoulli, J.}
\newblock Quadratura curvae, e cujus evolutione describitur inflexae laminae
  curvatura.
\newblock In {\em Die {W}erke von {J}akob {B}ernoulli}. 1691, pp.~223--227.

\bibitem{binet1844}
{\sc Binet, J.}
\newblock M{\'e}moire sur l{'}integration des {\'e}quations de la courbe
  {\'e}lastique {\`a} double courbure.
\newblock {\em Compte Rendu\/} (1844).

\bibitem{bor2017}
{\sc Bor, G., Levi, M., Perline, R., and Tabachnikov, S.}
\newblock Tire tracks and integrable curve evolution.
\newblock {\em arXiv preprint arXiv:1705.06314\/} (2017).

\bibitem{born1906}
{\sc Born, M.}
\newblock {\em Untersuchungen {\"u}ber die {S}tabilit{\"a}t der elastischen
  {L}inie in {E}bene und {R}aum, unter verschiedenen {G}renzbedingungen}.
\newblock PhD thesis, Universit{\"a}t G{\"o}ttingen, 1906.

\bibitem{bryant1986}
{\sc Bryant, R., and Griffiths, P.}
\newblock Reduction for constrained variational problems and $\int
  \kappa^2/2\,ds$.
\newblock {\em American Journal of Mathematics 108}, 3 (1986), 525--570.

\bibitem{calini2000}
{\sc Calini, A.}
\newblock Recent developments in integrable curve dynamics.
\newblock {\em Geometric Approaches to Differential Equations 15\/} (2000),
  56--99.

\bibitem{calini2004}
{\sc Calini, A.}
\newblock Integrable dynamics of knotted vortex filaments.
\newblock In {\em Proceedings of the Fifth International Conference on
  Geometry, Integrability and Quantization\/} (2004), pp.~11--50.

\bibitem{daRios1906}
{\sc Da~Rios, L.~S.}
\newblock Sul moto d{'}un liquido indefinito con un filetto vorticoso di forma
  qualunque.
\newblock {\em Rendiconti del Circolo Matematico di Palermo (1884-1940) 22}, 1
  (1906), 117--135.

\bibitem{dziuk2002}
{\sc Dziuk, G., Kuwert, E., and Sch{\"a}tzle, R.}
\newblock Evolution of elastic curves in $\mathbb{R}^n$: Existence and
  computation.
\newblock {\em SIAM Journal on Mathematical Analysis 33}, 5 (2002), 1228--1245.

\bibitem{euler1744}
{\sc Euler, L.}
\newblock {\em Methodus inveniendi lineas curvas maximi minimive proprietate
  gaudentes sive solutio problematis isoperimetrici latissimo sensu accepti}.
\newblock Springer, 1952.

\bibitem{faddeev1987}
{\sc Faddeev, L., and Takhtajan, L.}
\newblock {\em Hamiltonian Methods in the Theory of Solitons}.
\newblock Springer, 1987.

\bibitem{ferone2016}
{\sc Ferone, V., Kawohl, B., and Nitsch, C.}
\newblock The elastica problem under area constraint.
\newblock {\em Mathematische Annalen 365}, 3-4 (2016), 987--1015.

\bibitem{ferus1992}
{\sc Ferus, D., Pedit, F., Pinkall, U., and Sterling, L.}
\newblock Minimal tori in ${S}^4$.
\newblock {\em Journal f{\"u}r die reine und angewandte Mathematik (Crelles
  Journal) 429\/} (1992), 1--48.

\bibitem{grinevich2000}
{\sc Grinevich, P.~G., and Schmidt, M.~U.}
\newblock Closed curves in $\mathbb{R}^3$ and the nonlinear {S}chr{\"o}dinger
  equation.
\newblock In {\em Nonlinearity, Integrability And All That: Twenty Years After
  NEEDS'79}. World Scientific, 2000, pp.~139--145.

\bibitem{hasimoto1972}
{\sc Hasimoto, H.}
\newblock A soliton on a vortex filament.
\newblock {\em Journal of Fluid Mechanics 51}, 3 (1972), 477--485.

\bibitem{hermite1885}
{\sc Hermite, C.}
\newblock {\em Sur quelques applications des fonctions elliptiques}.
\newblock 1885.

\bibitem{hoffmann2008}
{\sc Hoffmann, T.}
\newblock Discrete {H}ashimoto surfaces and a doubly discrete smoke-ring flow.
\newblock In {\em Discrete Differential Geometry}. Springer, 2008, pp.~95--115.

\bibitem{holm2004}
{\sc Holm, D.~D., and Stechmann, S.~N.}
\newblock {H}asimoto transformation and vortex soliton motion driven by fluid
  helicity.
\newblock {\em arXiv preprint nlin/0409040\/} (2004).

\bibitem{jerrard2017}
{\sc Jerrard, R.~L., and Seis, C.}
\newblock On the vortex filament conjecture for {E}uler flows.
\newblock {\em Archive for Rational Mechanics and Analysis 224}, 1 (2017),
  135--172.

\bibitem{khesin2013}
{\sc Khesin, B.}
\newblock The vortex filament equation in any dimension.
\newblock {\em Procedia IUTAM 7\/} (2013), 135--140.

\bibitem{kirchhoff1859}
{\sc Kirchhoff, G.}
\newblock {\"U}ber das {G}leichgewicht und die {B}ewegung eines unendlich
  d{\"u}nnen elastischen {S}tabes.
\newblock {\em Journal f{\"u}r die reine und angewandte Mathematik 56\/}
  (1859), 285--313.

\bibitem{kleckner2013}
{\sc Kleckner, D., and Irvine, W.~T.}
\newblock Creation and dynamics of knotted vortices.
\newblock {\em Nature Physics 9}, 4 (2013), 253--258.

\bibitem{langer1999}
{\sc Langer, J.}
\newblock Recursion in curve geometry.
\newblock {\em The New York Journal of Mathematics 5}, 25--51 (1999).

\bibitem{langer1991}
{\sc Langer, J., and Perline, R.}
\newblock Poisson geometry of the filament equation.
\newblock {\em Journal of Nonlinear Science 1}, 1 (1991), 71--93.

\bibitem{levi1932}
{\sc Levi-Civita, T.}
\newblock Attrazione newtoniana dei tubi sottili e vortici filiformi (newtonian
  attraction of slender tubes and filiform vortices.
\newblock {\em Annali R. Scuola Norm. Sup. Pisa 1\/} (1932), 1--33.

\bibitem{levien2008}
{\sc Levien, R.}
\newblock The elastica: a mathematical history.
\newblock Tech. rep., University of California, 2008.

\bibitem{marsden1983}
{\sc Marsden, J., and Weinstein, A.}
\newblock Coadjoint orbits, vortices, and {C}lebsch variables for
  incompressible fluids.
\newblock {\em Physica D: Nonlinear Phenomena 7}, 1 (1983), 305--323.

\bibitem{matsutani2016}
{\sc Matsutani, S., and Previato, E.}
\newblock From {E}uler’s elastica to the m{K}d{V} hierarchy, through the
  {F}aber polynomials.
\newblock {\em Journal of Mathematical Physics 57}, 8 (2016), 081519.

\bibitem{pinkall2007}
{\sc Pinkall, U., Springborn, B., and Wei{\ss}mann, S.}
\newblock A new doubly discrete analogue of smoke ring flow and the real time
  simulation of fluid flow.
\newblock {\em Journal of Physics A 40}, 42 (2007).

\bibitem{previato1985}
{\sc Previato, E.}
\newblock Hyperelliptic quasi-periodic and soliton solutions of the nonlinear
  {S}chr{\"o}dinger equation.
\newblock {\em Duke Mathematical Journal 52}, 2 (1985), 329--377.

\bibitem{ricca1991}
{\sc Ricca, R.~L.}
\newblock Rediscovery of {D}a {R}ios equations.
\newblock {\em Nature 352\/} (1991), 561--562.

\bibitem{ricca1996}
{\sc Ricca, R.~L.}
\newblock The contributions of {D}a {R}ios and {L}evi-{C}ivita to asymptotic
  potential theory and vortex filament dynamics.
\newblock {\em Fluid Dynamics Research 18}, 5 (1996), 245--268.

\bibitem{saffman1992}
{\sc Saffman, P.~G.}
\newblock {\em Vortex Dynamics}.
\newblock Cambridge University Press, 1992.

\bibitem{singer2008}
{\sc Singer, D.~A.}
\newblock Lectures on elastic curves and rods.
\newblock In {\em AIP Conference Proceedings\/} (2008), vol.~1002, pp.~3--32.

\bibitem{tabachnikov2017}
{\sc Tabachnikov, S.}
\newblock On the bicycle transformation and the filament equation: Results and
  conjectures.
\newblock {\em Journal of Geometry and Physics 115\/} (2017), 116--123.

\bibitem{tjaden1991}
{\sc Tjaden, E.}
\newblock {\em Einfache elastische {K}urven}.
\newblock PhD thesis, TU Berlin, 1991.

\bibitem{truesdell1983}
{\sc Truesdell, C.}
\newblock The influence of elasticity on analysis: the classic heritage.
\newblock {\em Bulletin of the American Mathematical Society 9}, 3 (1983),
  293--310.

\bibitem{yasui1998}
{\sc Yasui, Y., and Sasaki, N.}
\newblock Differential geometry of the vortex filament equation.
\newblock {\em Journal of Geometry and Physics 28}, 1 (1998), 195--207.

\end{thebibliography}

\end{document}